\newcommand{\RR}{{\mathbb{R}}}
\newcommand{\NN}{{\mathbb{N}}}
\newcommand{\CC}{{\mathbb{C}}}
\newcommand{\trans}{{\sf T}}
\newcommand{\asto}{\overset{\rm a.s.}{\longrightarrow}}
\newcommand{\EE}{{\rm E}}
\DeclareMathOperator{\tr}{tr}
\DeclareMathOperator{\diag}{diag}
\DeclareMathOperator{\argmin}{argmin}
\newcounter{ctheorem}
\newtheorem{theorem}[ctheorem]{Theorem}
\newcounter{cassumption}
\newtheorem{assumption}[cassumption]{Assumption}
\newcounter{ccorollary}
\newtheorem{corollary}[ccorollary]{Corollary}
\newcounter{clemma}
\newtheorem{lemma}[clemma]{Lemma}
\newcounter{cremark}
\newtheorem{remark}[cremark]{Remark}
\newproof{proof}{Proof}
\journal{Journal of Multivariate Analysis}
\begin{document}

\begin{frontmatter}

\title{Robust spiked random matrices\\ and a robust G-MUSIC estimator\tnoteref{t1}}
\tnotetext[t1]{This work is jointly supported by the French ANR DIONISOS project (ANR-12-MONU-OOO3) and the GDR ISIS--GRETSI ``Jeunes Chercheurs'' Project.}

\author[supelec]{Romain Couillet}
\ead{romain.couillet@supelec.fr}
\address[supelec]{Telecommunication department, Sup\'elec, Gif sur Yvette, France}



\begin{abstract}
	A class of robust estimators of scatter applied to information-plus-impulsive noise samples is studied, where the sample information matrix is assumed of low rank; this generalizes the study \citep{COU13b} to spiked random matrix models. It is precisely shown that, as opposed to sample covariance matrices which may have asymptotically unbounded (eigen-)spectrum due to the sample impulsiveness, the robust estimator of scatter has bounded spectrum and may contain isolated eigenvalues which we fully characterize. We show that, if found beyond a certain detectability threshold, these eigenvalues allow one to perform statistical inference on the eigenvalues and eigenvectors of the information matrix. We use this result to derive new eigenvalue and eigenvector estimation procedures, which we apply in practice to the popular array processing problem of angle of arrival estimation. This gives birth to an improved algorithm based on the MUSIC method, which we refer to as robust G-MUSIC. 
\end{abstract}

\begin{keyword}
random matrix theory \sep robust estimation \sep spiked models \sep MUSIC.
\end{keyword}

\end{frontmatter}

\section{Introduction}
\label{sec:intro}

The mathematical advances in the field of random matrix theory have recently allowed for the improvement of sometimes old statistical estimation methods when the data have population size $N$ is commensurable with the sample size $n$, therefore disrupting the traditional assumption $n\gg N$. One of the recent contributions of random matrix theory lies in the introduction of methods to retrieve information contained in low rank perturbations of large matrices with independent entries, which are referred to as spiked models. The initial study of such models \citep{BAI06} for matrices of the type $\hat{S}_N=\frac1n(I_N+A)XX^*(I_N+A^*)$, where $X\in\CC^{N\times n}$ has independent and identically distributed (i.i.d.\@) zero mean, unit variance, and finite fourth moment entries and $A$ has fixed rank $L$, has shown that, as $N,n\to\infty$ with $N/n\to c\in (0,\infty)$, $\hat{S}_N$ may exhibit up to $L$ isolated eigenvalues strictly away from the {\it bounded} support of the limiting empirical distribution $\mu$ of $\hat{S}_N^\circ=\frac1nXX^*$, while the other eigenvalues of $\hat{S}_N$ get densely compacted in the support of $\mu$. This result has triggered multiple works on various low rank perturbation models for Gram, Wigner, or general square random matrices \citep{BEN09,PAU07,BEN10} with similar conclusions. Of particular interest to us here is the information-plus-noise model $\hat{S}_N=\frac1n(X+A)(X+A)^*$ introduced in \citep{BEN09} which is closer to our present model. Other generalizations explored the direction of turning $X$ into the more general $XT^{\frac12}$ model for $T=\diag(\tau_1,\ldots,\tau_n)\succeq 0$, such that $\frac1n\sum_{i=1}^n{\bm\delta}_{\tau_i}\to \nu$ weakly, where $\nu$ has bounded support ${\rm Supp}(\nu)$ and $\max_i\{{\rm dist}(\tau_i,{\rm Supp}(\nu))\}\to 0$ \citep{CHA12}. In this scenario again, thanks to the fundamental assumption that no $\tau_i$ can escape ${\rm Supp}(\nu)$ asymptotically, only finitely many eigenvalues of $\hat{S}_N$ can be found away from the support of the limiting spectral distribution of $\frac1nXTX^*$, and these eigenvalues are intimately linked to $A$. 

The major interest of the spiked models in practice is twofold. First, if the (non observable) perturbation matrix $A$ constitutes the relevant information to the system observer, then the observable isolated eigenvalues and associated eigenvectors of $\hat{S}_N$ contain information about $A$. These isolated eigenvalues and eigenvectors are therefore important objects to characterize. Moreover, since $\hat{S}_N$ has the same limiting spectrum as that of simple random matrix models, this characterization is usually quite easy and leads to tractable expressions and computationally efficient algorithms. This led to notable contributions to statistical inference and in particular to detection and estimation techniques for signal processing \citep{MES08b,NAD10,HAC13b,COU11e}. 

However, from the discussion of the first paragraph, these works have a few severe practical limitations in that: (i) the support of the limiting spectral distribution of $\hat{S}_N$ must be bounded for isolated eigenvalues to be detectable and exploitable and (ii) no eigenvalue of $\hat{S}_N^\circ$ (the unperturbed model) can be isolated, to avoid risking a confusion between isolated eigenvalues of $\hat{S}_N$ arising from $A$ and isolated eigenvalues of $\hat{S}_N$ intrinsically linked to $\hat{S}_N^\circ$. This therefore rules out the possibility to straightforwardly extend these techniques in practice to impulsive noise models $XT^{\frac12}$ where $T=\diag(\tau_1,\ldots,\tau_n)$ with either $\tau_i$ i.i.d.\@ arising from a distribution with unbounded support or $\tau_i=1$ for all but a few indices $i$. In the former case, the support of the limiting spectrum of $\hat{S}_N^\circ$ is unbounded \cite[Proposition~3.4]{HAC13}, therefore precluding information detection, while in the latter spurious eigenvalues in the spectrum of $\hat{S}_N$ may arise that are also found in $\hat{S}_N^\circ$ and therefore constitute false information (note that this case can be seen as one where low rank perturbations are present {\it both in the population and in the sample directions} which cannot be discriminated). Such impulsive models are nonetheless fundamental in many applications such as statistical finance or radar array processing, where impulsive samples are classically met.

Traditional statistical techniques to accommodate for impulsive samples fall in the realm of robust estimation \citep{MAR06book}, the study of which has long remained limited to the assumption $n\gg N$. Recently though, in a series of articles \citep{COU13,COU13b,COU14}, the author of the present article and his coauthors provided a random matrix analysis of robust estimation, i.e., assuming $N,n\to \infty$ and $N/n\to c\in(0,1)$, which revealed that robust sample estimates $\hat{C}_N^\circ$ of scatter (or covariance) matrices can be fairly easily analyzed through simpler equivalent random matrix models. In \citep{COU13b}, a noise-only setting of the present article is considered, i.e., with $A=0$, for which it is precisely shown that robust estimators of scatter can be assimilated as special models of the type of $\hat{S}_N^\circ$.\footnote{These models are special in that $XTX^*$ becomes now $XTVX^*$ for a diagonal matrix $V$ which makes $VT$ bounded in norm. However, $V$ contains non-observable information about $T$, which makes $\hat{S}_N^\circ$ only observable through its approximation by $\hat{C}_N^\circ$.} Besides, it importantly appears that the limiting spectrum distribution of $\hat{C}_N^\circ$ always has bounded support, irrespective of the impulsiveness of the samples. Also, it is proved (although not mentioned explicitly) that, asymptotically, isolated eigenvalues of $\hat{C}_N^\circ$ (arising from isolated $\tau_i$) can be found but that none of the eigenvalues can exceed a fixed finite value. 

In the present work, we extend the model studied in \citep{COU13b} by introducing a finite rank perturbation $A$ to the robust estimator of scale $\hat{C}_N^\circ$, the resulting matrix being denoted $\hat{C}_N$. As opposed to non-robust models, it shall appear (quite surprisingly on the onset) that $\hat{C}_N$ now allows for finitely many isolated eigenvalues to appear beyond the aforementioned fixed finite value (referred from now on to as the detection threshold), these eigenvalues being related to $A$. This holds even if $\frac1n\sum_{i=1}^n{\bm\delta}_{\tau_i}$ has unbounded support in the large $n$ regime. As such, any isolated eigenvalue of $\hat{C}_N$ found below the detection threshold may carry information about $A$ or may merely be an outlier due to an isolated $\tau_i$ (as in the non-robust context) but any eigenvalue found beyond the detection threshold necessarily carries information about $A$. This has important consequences in practice as now low rank perturbations {\it in the sample direction} are appropriately harnessed by the robust estimator while the (more relevant) low rank perturbations {\it in the population direction} can be properly estimated. We shall introduce an application of these results to array processing by providing two novel estimators for the power and steering direction of signals sources captured by a large sensor array under impulsive noise.

Our contribution thus lies on both theoretical and practical grounds. We first introduce in Theorem~\ref{th:1} the generalization of \citep{COU13b} to the perturbed model $\hat{C}_N$ which we precisely define in Section~\ref{sec:model}. The main results are then contained in Section~\ref{sec:results}. In this section, Theorem~\ref{th:2} provides the localization of the eigenvalues of $\hat{C}_N$ in the large system regime along with associated population eigenvalue and eigenvector estimators when the limiting distribution for $\frac1n\sum_{i=1}^n{\bm\delta}_{\tau_i}$ is known. This result is then extended in Theorem~\ref{th:3} thanks to a two-step estimator where the $\tau_i$ are directly estimated. A practical application of these novel methods to the context of steering angle estimation for array processing is then provided, leading to an improved algorithm referred to as robust G-MUSIC. Simulation results in this context are then displayed that confirm the improved performance of using robust schemes versus traditional sample covariance matrix-based techniques. We finally close the article with concluding remarks in Section~\ref{sec:conclusion}.

{\it Notations}: Vectors and matrices are represented in lower- and upper-case characters, respectively. Transpose and Hermitian transpose of $X$ are denoted respectively by $X^T$ and $X^*$. The norm $\Vert \cdot \Vert$ is the spectral norm for matrices and the Euclidean norm for vectors. The matrix $T^{\frac12}$ is the nonnegative definite square root of the Hermitian nonnegative definite matrix $T$. The eigenvalues of a Hermitian matrix $X\in\CC^{N\times N}$ are denoted in order as $\lambda_1(X)\geq \ldots \geq \lambda_N(X)$. Hermitian matrix ordering is denoted $X \succeq Y$, i.e., $X-Y$ is nonnegative definite. The support of a measure $\mu$ is denoted ${\rm Supp}(\mu)$. Almost sure convergence will be sometimes denoted ``$\asto$''. The Dirac measure at $x$ is denoted ${\bm\delta}_x$.

\section{Model and Motivation}
\label{sec:model}

Let $n\in \NN$. For $i\in\{1,\ldots,n\}$, we consider the following statistical model
\begin{align}
	\label{eq:model}
	y_i &= \sum_{l=1}^L \sqrt{p_l} a_l s_{li} + \sqrt{\tau_i} w_i
\end{align}
with $y_i\in \CC^N$ satisfying the following hypotheses.

\begin{assumption}
	\label{ass:y}
	The vectors $y_1,\ldots,y_n\in \CC^N$ satisfy the following conditions:
\begin{enumerate}
	\item \label{item:tau} $\tau_1,\ldots,\tau_n\in(0,\infty)$ are random scalars such that $\nu_n\triangleq\frac1n\sum_{i=1}^n {\bm\delta}_{\tau_i}\to \nu$ weakly, almost surely, where $\int t \nu(dt)=1$;
	\item $w_1,\ldots,w_n\in\CC^N$ are random independent unitarily invariant $\sqrt{N}$-norm vectors, independent of $\tau_1,\ldots,\tau_n$;
	\item $L\in \NN$, $p_1\geq \ldots\geq p_L\geq 0$ are deterministic and independent of $N$
	\item \label{item:a} $a_1,\ldots,a_L\in\CC^N$ are deterministic or random and such that
		\begin{align*}
			A^*A \asto \diag(p_1,\ldots,p_L)
		\end{align*}
		as $N\to\infty$, with $A\triangleq [\sqrt{p_1}a_1,\ldots,\sqrt{p_L} a_L]\in\CC^{N\times L}$
	\item $s_{1,1},\ldots,s_{Ln}\in \CC$ are independent with zero mean, unit variance, and uniformly bounded moments of all orders.
\end{enumerate}
\end{assumption}

For further use, we shall define
\begin{align*}
	A_i &\triangleq \begin{bmatrix} \sqrt{p_1}a_1&\ldots&\sqrt{p_L}a_L & \sqrt{\tau_i}I_N \end{bmatrix} \in\CC^{N\times (N+L)}.
\end{align*}
In particular, $A_iA_i^*=AA^*+\tau_i I_N$.

\begin{remark}[Application contexts]
	\label{rem:examples}
The system \eqref{eq:model} can be adapted to multiple scenarios in which the $s_{li}$ model scalar signals or data originated from $L$ sources of respective powers $p_1,\ldots,p_L$ carried by the vectors $a_1,\ldots,a_L$, while the $\sqrt{\tau_i}w_i$ model additive impulsive noise. Two examples are:
\begin{itemize}
	\item wireless communication channels in which signals $s_{li}$ originating from $L$ transmitters are captured by an $N$-antenna receiver. The vectors $a_l$ are here random independent channels for which it is natural to assume that $a_l^*a_{l'}\to {\bm\delta}_{l-l'}$ (e.g., for independent $a_l\sim \mathcal{CN}(0,I_N/N)$);
	\item array processing in which $L$ sources emit signals $s_{li}$ captured by an antenna array through steering vectors $a_l=a(\theta_l)$ for a given $a(\theta)$ function and angles of arrival $\theta_1,\ldots,\theta_L\in[0,2\pi)$. In the case of uniform linear arrays with inter-antenna distance $d$, $[a(\theta)]_j=N^{-\frac12}\exp(2\pi\imath dj\sin(\theta))$.
\end{itemize}
The noise impulsiveness is translated by the $\tau_i$ coefficients. The vectors $\sqrt{\tau_i}w_i$ are for instance i.i.d.\@ elliptic random vectors if the $\tau_i$ are i.i.d.\@ with absolutely continuous measure $\tilde\nu_n$ having a limit $\tilde{\nu}$ (in which case, we easily verify that $\nu_n\to \nu=\tilde\nu$ almost surely (a.s.)). This particularizes to additive white Gaussian noise if $2N\tau_i$ is chi-square with $2N$ degrees of freedom (in this case, $\nu={\bm\delta}_1$). Of interest in this article is however the scenarios where $\nu$ has unbounded support, e.g., when the $\tau_i$ are either random i.i.d.\@ and heavy-tailed or contain a few arbitrarily large outliers, which both correspond to impulsive noise scenarios.
\end{remark}

\begin{remark}[Technical comments]
	From a purely technical perspective, it is easily seen from the proofs of our main results in Section~\ref{sec:proof} that some of the items of Assumption~\ref{ass:y} could have been relaxed. In particular, Item~(\ref{item:a}) could have been relaxed into ``all accumulation points of $A^*A$ are similar to $\diag(q_1,\ldots,q_L)$ for given $q_1\geq \ldots\geq q_L$'' as in e.g., \citep{CHA12}. Also, similar to \citep{COU13b}, the convergence of $\nu_n$ in Item~(\ref{item:tau}) could be relaxed to the cost of introducing a tightness condition on the sequence $\{\nu_n\}_{n=1}^\infty$ and to loose the convergence of measure in the discussion following Theorem~\ref{th:1}. For readability and since Assumption~\ref{ass:y} gathers most of the scenarios of interest, we restrict ourselves to those (already quite general) hypotheses.
\end{remark}

We now define the robust estimate of scatter $\hat{C}_N$. We start by denoting $u:[0,\infty)\to (0,\infty)$ any function satisfying the following hypotheses.
\begin{assumption}
	\label{ass:u}
	The function $u$ is characterized by
	\begin{enumerate}
		\item $u$ is continuous, nonnegative, and non-increasing from $[0,\infty)$ onto $(0,u(0)]\subset (0,\infty)$;
		\item for $x\geq 0$, $\phi(x)\triangleq xu(x)$ is increasing and bounded with
			\begin{align*}
				\phi_\infty &\triangleq \lim_{x\to\infty}\phi(x)>1
			\end{align*}
		\item there exists $m>0$ such that $\nu([0,m))<1-\phi_\infty^{-1}$;
		\item for all $a>b>0$,
			\begin{align*}
				\limsup_{t\to\infty} \frac{\nu( (t,\infty) )}{\phi(at)-\phi(bt)} = 0.
			\end{align*}
	\end{enumerate}
\end{assumption}
These assumptions are the same as in \cite{COU13b} which are therefore not altered by the updated model \eqref{eq:model}.

The function $u$ being given, we now define $\hat{C}_N$, when it exists, as the unique solution to the fixed-point matrix-valued equation in $Z$:
\begin{align*}
	Z = \frac1n\sum_{i=1}^n u\left( \frac1N y_i^* Z^{-1} y_i \right) y_iy_i^*.
\end{align*}
For $i\in\{1,\ldots,N\}$, we shall denote $\hat{\lambda}_i\triangleq \lambda_i(\hat{C}_N)$ and $\hat{u}_i\in\CC^N$ the $i$-th largest eigenvalue of $\hat{C}_N$ and its associated eigenvector. 

Due to its implicit formulation, the study of $\hat{C}_N$ for every fixed $N,n$ couple is quite involved in general. As such, similar to \citep{COU13b}, we shall place ourselves in the regime where both $N$ and $n$ are large but with non trivial ratio. Hence, we shall assume the following system growth regime.

\begin{assumption}
	\label{ass:c}
	The integer $N=N(n)$ is such that $c_n\triangleq N/n$ satisfies 
	\begin{align*}
		\lim_{n\to\infty} c_n = c\in (0,\phi_\infty^{-1}).
	\end{align*}
	Meanwhile, $L$ remains constant independently of $N,n$.
\end{assumption}

Up to differences in the hypotheses of Assumption~\ref{ass:u} and Assumption~\ref{ass:c}, and a slight difference in notations, $\hat{C}_N$ is exactly the robust estimator of scatter proposed by Maronna in \citep{MAR76}.
As a direct application of \citep{CHI14}, under Assumption~\ref{ass:y} and Assumption~\ref{ass:u}, $\hat{C}_N$ is almost surely well defined for each couple $N,n$ with $N<n$. Also, from \citep{COU13b}, $\hat{C}_N$ can be written (at least for all large $n$) in the technically more convenient form (see discussions in \citep{COU13b})
\begin{align*}
	\hat{C}_N &= \frac1n\sum_{i=1}^n v\left( \frac1n y_i^* \hat{C}_{(i)}^{-1} y_i \right) y_iy_i^*
\end{align*}
where $v:x\mapsto u\circ g^{-1}$, $g:x\mapsto x/(1-c_n\phi(x))$, and $\hat{C}_{(i)}=\hat{C}_N-u\left( \frac1n y_i^* \hat{C}_N^{-1} y_i \right) y_iy_i^*$. We shall further denote $\psi(x)=xv(x)$. It is easy to see that $v$ is non-increasing while $\psi$ is increasing with limit $\psi_\infty=\phi_\infty/(1-c_n\phi_\infty)$.

With these definitions in place, we are now in position to present our main results.

\section{Main Results}
\label{sec:results}

The first objective of the article is to study the spectrum of $\hat{C}_N$ and in particular its largest eigenvalues $\hat{\lambda}_1\geq \ldots\geq \hat{\lambda}_L$ and associated eigenvectors $\hat{u}_1,\ldots,\hat{u}_L$, in the large $N,n$ regime. This study will in turn allow us to retrieve information on $p_1,\ldots,p_L$ and $a_1,\ldots,a_L$. As an application, a novel improved angle estimator for array processing will then be provided.

\subsection{Localisation and estimation}

Our first result is an extension of \cite[Theorem~2]{COU13b} which states that $\hat{C}_N$, the implicit structure of which makes it complicated to analyze, can be appropriately replaced by a more practical random matrix $\hat{S}_N$, which is much easier to study.

\begin{theorem}[Asymptotic model equivalence]
	\label{th:1}
	Let Assumptions~\ref{ass:y},~\ref{ass:u},~and~\ref{ass:c} hold. Then
	\begin{align*}
		\Vert \hat{C}_N - \hat{S}_N \Vert \asto 0
	\end{align*}
	where
	\begin{align*}
		\hat{S}_N &\triangleq \frac1n \sum_{i=1}^n v_c(\tau_i \gamma) A_i \bar{w}_i \bar{w}_i^* A_i^*
	\end{align*}
	with $\gamma$ the unique solution to
	\begin{align*}
		1 &= \int \frac{\psi_c(t\gamma)}{1+c\psi_c(t\gamma)}\nu(dt)
	\end{align*}
	$v_c$ and $\psi_c$ the limits of $v$ and $\psi$ as $c_n\to c$, and $\bar{w}_i=[s_{1i},\ldots,s_{Li},w_i r_i/\sqrt{N}]^\trans$, with $r_i\geq 0$ such that $2Nr_i^2$ is a chi-square random variable with $2N$ degrees of freedom, independent of $w_i$.%
	\footnote{Note that $w_i r_i/\sqrt{N}$ as defined above is a standard Gaussian vector and therefore $\bar{w}_i$ has independent entries of zero mean and unit variance. In fact, the result can be equivalently formulated with $\bar{w}_i$ replaced by $[s_{1i},\ldots,s_{Li},w_i]^\trans$, but the former vector, having independent entries, is of more interest statistically.}
\end{theorem}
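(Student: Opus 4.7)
The proof extends Theorem~2 of \citep{COU13b} (which treats the pure-noise case $L=0$) to the present spiked model. The key point is that the rank-$L$ addition $A\tilde{s}_i$ to $y_i = A\tilde{s}_i + \sqrt{\tau_i}w_i$ (with $\tilde{s}_i=[s_{1i},\ldots,s_{Li}]^\trans$) perturbs the driving quadratic forms by a lower-order amount and does not shift the leading-order asymptotics.

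Starting from the convenient representation $\hat{C}_N = \tfrac1n\sum_{i=1}^n v(d_i)y_iy_i^*$ with $d_i \triangleq \tfrac1n y_i^*\hat{C}_{(i)}^{-1}y_i$, I would decompose
\begin{align*}
d_i = \tfrac{\tau_i}{n}w_i^*\hat{C}_{(i)}^{-1}w_i + \tfrac1n\tilde s_i^*A^*\hat{C}_{(i)}^{-1}A\tilde s_i + 2\Re\bigl\{\tfrac{\sqrt{\tau_i}}{n}\tilde s_i^* A^*\hat{C}_{(i)}^{-1}w_i\bigr\}.
\end{align*}
Under Assumptions~\ref{ass:y}--\ref{ass:c}, a direct adaptation of \citep{COU13b} shows that $\|\hat{C}_{(i)}^{-1}\|$ is almost surely uniformly bounded; combined with $A^*A \asto \diag(p_1,\ldots,p_L)$ and the uniformly bounded moments of the $s_{li}$, the second term is $O(L/n)$ uniformly in $i$ and the cross term is $o(1)$ by Cauchy--Schwarz. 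For the noise--noise term I would exploit the unitary invariance of $w_i$ and its independence from $\hat{C}_{(i)}$, then apply a trace-lemma-type concentration for sphere-uniform vectors followed by a union bound over $i\leq n$, to obtain $\tfrac{\tau_i}{n}w_i^*\hat{C}_{(i)}^{-1}w_i = \tau_i\cdot\tfrac{1}{n}\tr\hat{C}_{(i)}^{-1}+\varepsilon_{n,i}$ with $\max_i|\varepsilon_{n,i}|\asto 0$.

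Next, mirroring the argument of \citep{COU13b}, $\tfrac1n\tr\hat{C}_{(i)}^{-1}$ is shown to concentrate uniformly in $i$ to a deterministic limit $\gamma$. Plugging the approximation $v(d_i)\approx v_c(\tau_i\gamma)$ back into the fixed-point equation defining $\hat{C}_N$, taking normalized traces of $\hat{C}_N^{-1}$, and using the change of variables $v=u\circ g^{-1}$ together with $\psi=xv$ (of limit $\psi_c$) yields, after rearrangement, the master equation $1=\int\tfrac{\psi_c(t\gamma)}{1+c\psi_c(t\gamma)}\nu(dt)$. Existence and uniqueness of $\gamma$ follow from the monotonicity of $\psi_c$ and Assumption~\ref{ass:u}.

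Finally, to convert pointwise control of the weights into a spectral norm statement, I would write $\hat{C}_N - \hat{S}_N = \tfrac1n\sum_i[v(d_i)-v_c(\tau_i\gamma)]\,y_iy_i^* + R_n$, where $R_n$ captures the discrepancy between $y_iy_i^*$ and $A_i\bar w_i\bar w_i^* A_i^*$ and vanishes in norm because $r_i/\sqrt{N}\to 1$ almost surely uniformly and $\|A\|=O(1)$. The main obstacle is that $\tfrac1n\sum_i y_iy_i^*$ may have unbounded spectral norm when $\nu$ has unbounded support, so the crude estimate $\|\hat{C}_N-\hat{S}_N\|\leq\max_i|v(d_i)-v_c(\tau_i\gamma)|\cdot\|\tfrac1n\sum_i y_iy_i^*\|$ is useless. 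As in \citep{COU13b}, the resolution relies crucially on Assumption~\ref{ass:u}(4), which forces $v_c(t\gamma)$ to decay fast enough (essentially like $\phi_\infty/(t\gamma)$ for large $t$) to tame heavy $\tau_i$; combined with local Lipschitz continuity of $v_c$, one splits the sum into a bulk part, where $\tau_i$ lies in a fixed compact set and the Lipschitz bound controls the difference uniformly, and a tail part, where both $v(d_i)$ and $v_c(\tau_i\gamma)$ are individually small and so is each contribution to the operator norm. This yields $\|\hat{C}_N-\hat{S}_N\|\asto 0$.
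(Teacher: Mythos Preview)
Your overall strategy matches the paper's: reduce to \citep{COU13b} by showing that the rank-$L$ contribution $A\tilde s_i$ only perturbs the quadratic forms $d_i$ at lower order, establish $d_i\approx\tau_i\gamma$ uniformly in $i$, and then use the bulk/tail splitting together with Assumption~\ref{ass:u}(4) to pass from pointwise weight control to a spectral-norm statement. The decomposition of $d_i$, the Cauchy--Schwarz bound on the cross term, and the final norm argument are all in line with the paper.

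There is, however, a genuine gap at the step where you write ``a direct adaptation of \citep{COU13b} shows that $\|\hat C_{(i)}^{-1}\|$ is almost surely uniformly bounded.'' This is precisely the non-trivial new ingredient in the spiked setting, and the paper devotes an entire subsection to it. In the pure-noise case of \citep{COU13b}, the relevant matrices (e.g., $\frac1n W_{(j)}T_{(j)}V_{(j)}W_{(j)}^*$) have smallest eigenvalue bounded away from zero by standard Mar\v{c}enko--Pastur edge results. Once you add the low-rank term $AS$, the matrices $\hat S_{(j)}$, $\hat S_{(j),\eta}$ (and hence the $\check S_{(j),\eta}$ used in the quadratic-form lemmas) could a priori develop finitely many isolated eigenvalues drifting toward $0$. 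Your trace-lemma and Cauchy--Schwarz steps all \emph{presuppose} a uniform lower bound on these spectra, so the argument is circular unless this is established independently. The paper resolves this by first working with the explicit surrogate $\hat S_N$: it writes $\hat S_N=\hat S_N^\circ+\Gamma$ with $\Gamma$ a rank-$2L$ perturbation, derives the determinant equation $\det(I_{2L}+\Gamma_L(z))=0$ for outliers, obtains high-order moment bounds on the blocks of $\Gamma_L(z)$ (Lemma~\ref{le:1}) uniformly over $j$ via a union bound with $p>4$, passes to deterministic equivalents (Lemma~\ref{le:2}), and then uses the argument principle on a contour around $[0,\varepsilon]$ to show no eigenvalue of $\hat S_{(j)}$ (uniformly in $j$) can lie near zero. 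Only after this localization step can the generalizations of \citep[Lemmas~2--3, Remark~2]{COU13b} be justified as you describe.

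In short: your proposal correctly identifies the skeleton of the proof but treats the uniform lower spectral bound as a free consequence of \citep{COU13b}, whereas establishing it in the presence of the perturbation $A$ is the main technical contribution and requires the spiked-model eigenvalue localization machinery.
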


\begin{remark}[From robust estimator to sample covariance matrix]
	\label{rem:v=1}
	Note that, if the function $v_c$ in the expression of $\hat{S}_N$ were replaced by the constant $1$ (and $r_i/\sqrt{N}$ set to one), $\hat{S}_N$ would be the classical sample covariance matrix of $y_1,\ldots,y_n$. Although it is here highly non rigorous to let $v_c$ tend to $1$ uniformly in Theorem~\ref{th:1}, this remark somewhat reveals the classical robust estimation intuition according to which the larger $\phi_\infty$ (as a consequence of $u$ and $v_c$ being close to $1$) the less robust $\hat{C}_N$.
\end{remark}

As a corollary of Theorem~\ref{th:1}, we have
\begin{align}
	\label{eq:cvg_lambdas}
	\max_{1\leq i\leq N}\left|\hat{\lambda}_i-\lambda_i(\hat{S}_N)\right|\asto 0
\end{align}
(which unfolds from applying \cite[Theorem~4.3.7]{HOR85}) and therefore all eigenvalues of $\hat{C}_N$ can be accurately controlled through the eigenvalues of $\hat{S}_N$. 

\bigskip

Let us assume for a moment that $p_1=\ldots=p_L=0$. Then, from Theorem~\ref{th:1}, Assumption~\ref{ass:y}, and \citep{CHO95}, $\mu_n\triangleq \frac1N\sum_{i=1}^N {\bm\delta}_{\hat\lambda_i}\to \mu$ weakly, a.s., where $\mu$ has a density on $\RR$ with bounded support ${\rm Supp}(\mu)\subset \RR^+$. Denote
\begin{align*}
	S^-_\mu &\triangleq \inf({\rm Supp}(\mu)) \\
	S^+_\mu &\triangleq \sup({\rm Supp}(\mu)) \\
	S^+ &\triangleq \frac{\phi_\infty(1+\sqrt{c})^2}{\gamma(1-c\phi_\infty)}.
\end{align*}
Since $\tau_i v_c(\tau_i\gamma)=\gamma^{-1}\psi_c(\tau_i\gamma)<\gamma^{-1}\psi_{c,\infty}$ with $\psi_{c,\infty}=\phi_\infty/(1-c\phi_\infty)$, we have
\begin{align*}
	\hat{S}_N &\preceq \frac{\phi_\infty}{\gamma(1-c\phi_\infty)}\frac1n\sum_{i=1}^n w_iw_i^*
\end{align*}
so that, according to \citep{MAR67,SIL98} and \eqref{eq:cvg_lambdas}, for each $\varepsilon>0$, $\hat{\lambda}_1<S^++\varepsilon$ for all large $n$ a.s. Of course, $S^+\geq S_\mu^+$. If in addition $\max_{1\leq i\leq n}\{{\rm dist}(\tau_i,{\rm Supp}(\nu))\}\asto 0$, then from \citep{SIL98}, we even have $\hat{\lambda}_1\asto S^+_\mu$; but this constraint is of little practical interest so that in general one may have $S^+_\mu <\hat{\lambda}_1<S^+$ infinitely often.

\bigskip

Coming back to generic values for $p_1,\ldots,p_L$, the idea of the results below is that, for sufficiently large $p_1,\ldots,p_L$, the eigenvalues $\hat{\lambda}_1,\ldots,\hat{\lambda}_L$ may exceed $S^++\varepsilon$ and contain information to estimate $p_1,\ldots,p_L$ as well as bilinear forms involving $a_1,\ldots,a_L$. The exact location of the eigenvalues and the value of these estimates shall be expressed as a function of the fundamental object $\delta(x)$, defined for $x\in \RR^* \setminus [S_\mu^-,S_{\mu}^+]$ as the unique real solution to
\begin{align*}
	\delta(x) = c\left( - x + \int \frac{tv_c(t\gamma)}{1+\delta(x)tv_c(t\gamma)}\nu(dt) \right)^{-1}.
\end{align*}
The function $\delta(x)$ is the restriction to $\RR^* \setminus [S_\mu^-,S_{\mu}^+]$ of the Stieltjes transform of $c\mu+(1-c){\bm\delta}_0$ and is, as such, increasing on $(S^+,\infty)\subset (S_\mu^+,\infty)$
; see \citep{CHO95,HAC13} and Section~\ref{sec:proof} for details. Therefore, the following definition of $p_-$, which will be referred to as the detectability threshold, is licit
\begin{align*}
	p_- \triangleq \lim_{x \downarrow S^+} -c \left( \int \frac{\delta(x)v_c(t\gamma)}{1+\delta(x)tv_c(t\gamma)}\nu(dt) \right)^{-1}.
\end{align*}
We shall further denote $\mathcal L\triangleq \{j,p_j>p_-\}$. 

\bigskip

We are now in position to provide our main results.
\begin{theorem}[Robust estimation under known $\nu$]
	\label{th:2}
	Let Assumptions~\ref{ass:y},~\ref{ass:u}, and \ref{ass:c} hold.
	Denote $u_k$ the eigenvector associated with the $k$-th largest eigenvalue of $AA^*$ (in case of multiplicity, take any vector in the eigenspace with $u_1,\ldots,u_L$ orthogonal) and $\hat{u}_1,\ldots,\hat{u}_N$ the eigenvectors of $\hat{C}_N$ respectively associated with the eigenvalues $\hat{\lambda}_1\geq \ldots \geq \hat{\lambda}_N$. Then, we have the following three results.

	\medskip
	
	{\sf 0. Extreme eigenvalues.} For each $j\in\mathcal L$,
	\begin{align*}
		\hat{\lambda}_j &\asto \Lambda_j > S^+
	\end{align*}
	while $\limsup_n \hat{\lambda}_{|\mathcal L|+1} \leq S^+$ a.s., where $\Lambda_j$ is the unique positive solution to
	\begin{align*}
		-c\left( \delta(\Lambda_j) \int \frac{v_c(\tau \gamma)}{1+\delta(\Lambda_j)\tau v_c(\tau\gamma)} \nu(d\tau)\right)^{-1}= p_j.
	\end{align*}
	
	{\sf 1. Power estimation.} For each $j\in\mathcal L$,
	\begin{align*}
		- c \left( \delta(\hat{\lambda}_j) \int \frac{v_c(\tau\gamma)}{1+\delta(\hat{\lambda}_j)\tau v_c(\tau \gamma)}\nu(d\tau) \right)^{-1}\asto p_j.
	\end{align*}

	{\sf 2. Bilinear form estimation.}
	For each $a,b\in\CC^N$ with $\Vert a\Vert=\Vert b\Vert=1$, and $j\in\mathcal L$
	\begin{align*}
	 	\sum_{k,p_k=p_j} a^*u_ku_k^* b - \sum_{k,p_k=p_j} w_k a^* \hat{u}_k\hat{u}_k^* b  \asto 0
	\end{align*}
	where
	\begin{align*}
		w_k &= \frac{ \displaystyle\int \frac{v_c(t\gamma)}{\left(1+\delta(\hat{\lambda}_k)tv_c(t\gamma)\right)^2}  \nu(dt)}{\displaystyle \int \frac{v_c(t\gamma)}{1+\delta(\hat{\lambda}_k)tv_c(t\gamma)}  \nu(dt)\left( 1 - \frac1c \int \frac{\delta(\hat{\lambda}_k)^2t^2v_c(t\gamma)^2}{\left(1+\delta(\hat{\lambda}_k)tv_c(t\gamma)\right)^2}  \nu(dt) \right)}.
	\end{align*}
\end{theorem}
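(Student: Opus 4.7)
The plan is to reduce everything to the explicit matrix $\hat S_N$ via Theorem~\ref{th:1}. Weyl's inequality then yields $\max_i|\hat\lambda_i - \lambda_i(\hat S_N)| \asto 0$, and an eventual Davis--Kahan argument will transfer the bilinear-form statements for spectral projectors onto isolated eigenvalues. The first concrete step is to expand $A_i\bar w_i = As_i + \sqrt{\tau_i}\,\tilde w_i$ with $s_i=[s_{1i},\ldots,s_{Li}]^\trans$ and $\tilde w_i := w_ir_i/\sqrt N$, and rewrite
\begin{equation*}
\hat S_N \;=\; \hat S_N^\circ \;+\; A\bar M A^* \;+\; \tfrac{1}{\sqrt n}\bigl(A\bar V^* + \bar V A^*\bigr),
\end{equation*}
where $\hat S_N^\circ := \tfrac1n\sum_i v_c(\tau_i\gamma)\tau_i\tilde w_i\tilde w_i^*$ is the noise-only robust model of \citep{COU13b} (whose limiting spectrum is the compactly supported $\mu$ recalled before the theorem), $\bar M := \tfrac1n\sum_i v_c(\tau_i\gamma)s_is_i^*$ converges to $(\int v_c(\tau\gamma)\,\nu(d\tau))\,I_L$ almost surely, and $\bar V := \tfrac{1}{\sqrt n}\sum_i \sqrt{\tau_i}\,v_c(\tau_i\gamma)\,\tilde w_is_i^*$ is zero-mean and $O(1)$ in operator norm. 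The perturbation $\hat S_N - \hat S_N^\circ$ then has rank at most $2L$ and lies in $\mathrm{span}([A,\bar V])$, so the matrix-determinant / Sylvester identity will reduce the isolated-eigenvalue analysis to a $2L\times 2L$ determinantal equation.

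\noindent\textbf{Eigenvalue localisation and power estimation.} For $\lambda$ outside $\mathrm{spec}(\hat S_N^\circ)$, the defining equation for $\lambda \in \mathrm{spec}(\hat S_N)$ is the vanishing of the said $2L\times 2L$ determinant, whose entries are the bilinear forms $A^*(\lambda I-\hat S_N^\circ)^{-1}A$, $\bar V^*(\lambda I-\hat S_N^\circ)^{-1}\bar V$, and the cross block $A^*(\lambda I-\hat S_N^\circ)^{-1}\bar V$. I would next invoke the isotropic deterministic equivalent for $(\lambda I-\hat S_N^\circ)^{-1}$ (legitimate by unitary invariance of the $w_i$) to obtain $A^*(\lambda I-\hat S_N^\circ)^{-1}A \asto \alpha(\lambda)\diag(p_l)$ with $\alpha(\lambda)$ an explicit function of $\delta(\lambda)$, while $\bar V^*(\lambda I-\hat S_N^\circ)^{-1}\bar V$ is controlled by conditioning on $\tilde X=[\sqrt{\tau_i}\tilde w_i]$ and applying a trace lemma to the $s_i$-factor, producing an $I_L$-proportional limit expressible as a $\nu$-integral in $\delta(\lambda)$; the cross block vanishes after multiplication by the outer $n^{-1/2}$. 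Substituting these limits and using the fixed-point defining $\delta$ to simplify collapses the determinantal equation into the scalar relation
\begin{equation*}
-c\Bigl(\delta(\Lambda_j)\int\frac{v_c(\tau\gamma)}{1+\delta(\Lambda_j)\tau v_c(\tau\gamma)}\,\nu(d\tau)\Bigr)^{-1}=p_j,
\end{equation*}
which is uniquely solvable for $\Lambda_j\in(S^+,\infty)$ exactly when $p_j>p_-$ by the monotonicity of $\delta$ on $(S^+,\infty)$ and the definition of $p_-$. The bound $\limsup_n\hat\lambda_{|\mathcal L|+1}\leq S^+$ will follow from rank-$2L$ Weyl interlacing combined with the edge control for $\hat S_N^\circ$ recalled just before the theorem. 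The power-estimation statement is then immediate on inverting the continuous monotone map $\Lambda\mapsto p(\Lambda)$ on $(S^+,\infty)$ and substituting $\hat\lambda_j\asto\Lambda_j$.

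\noindent\textbf{Bilinear forms and main obstacle.} For the eigenvector bilinear-form claim, the plan is to exploit the Cauchy representation
\begin{equation*}
\sum_{k:\,p_k=p_j}\hat u_k\hat u_k^* \;=\; -\frac{1}{2\pi\imath}\oint_{\Gamma_j}(\hat C_N - zI_N)^{-1}\,dz,
\end{equation*}
with $\Gamma_j$ a small positively-oriented contour around $\Lambda_j$ which, by the previous step, asymptotically isolates exactly the $\hat\lambda_k$ with $p_k=p_j$. Theorem~\ref{th:1} allows replacing $\hat C_N$ by $\hat S_N$ uniformly in $z\in\Gamma_j$, and the Woodbury identity expresses $(\hat S_N - zI_N)^{-1}$ in terms of $(\hat S_N^\circ - zI_N)^{-1}$ and the same $2L\times 2L$ pencil as above. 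Taking deterministic equivalents entry-by-entry in $a^*(\hat S_N - zI_N)^{-1}b$ identifies a rational function of $z$ with a simple pole at $z=\Lambda_j$; extracting the residue, using the identities obtained by differentiating the $\delta$-fixed point, will produce precisely the weight $w_k$ displayed in the theorem. The hardest part will be (i) handling $\bar V$, which is strongly correlated with $\hat S_N^\circ$ through $\tilde X$ and so requires a two-step argument (conditional expectation on $\tilde X$, then concentration over the $s_i$'s) to compute both $\bar V^*(\lambda I-\hat S_N^\circ)^{-1}\bar V$ and the cross block; and (ii) the residue calculation isolating $w_k$ is algebraically intricate, the factor $1-c^{-1}\int\delta^2 t^2 v_c(t\gamma)^2/(1+\delta tv_c(t\gamma))^2\,\nu(dt)$ in its denominator arising precisely as the Jacobian $d\Lambda/d\delta$ along the spike map.
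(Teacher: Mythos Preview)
Your approach mirrors the paper's almost exactly: same additive decomposition $\hat S_N = \hat S_N^\circ + (\text{rank-}2L)$, same Sylvester reduction to a $2L\times 2L$ determinant, same isotropic deterministic equivalents for the blocks (the paper packages these as its Lemmas~\ref{le:1}--\ref{le:2}), and the same Cauchy--Woodbury--residue route for the bilinear forms. The residue computation you anticipate, including the Jacobian factor $1-c^{-1}\int\delta^2t^2v_c(t\gamma)^2/(1+\delta tv_c(t\gamma))^2\,\nu(dt)$, is precisely how the paper produces $w_k$.

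There is one genuine gap. Weyl interlacing for a rank-$2L$ Hermitian perturbation yields at best $\limsup_n\hat\lambda_{2L+1}\leq S^+$, or $\limsup_n\hat\lambda_{L+1}\leq S^+$ if you exploit that the inner matrix $\left[\begin{smallmatrix}\bar M & I_L\\ I_L & 0\end{smallmatrix}\right]$ has signature $(L,L)$. It does \emph{not} bound $\hat\lambda_{|\mathcal L|+1}$ when $|\mathcal L|<L$, i.e., when some $p_j\leq p_-$. Knowing that the \emph{limiting} determinantal equation has exactly $|\mathcal L|$ roots in $(S^+,\infty)$ is not enough either: you must transfer that root count to the finite-$n$ determinant. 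The paper does this via the argument principle on a small contour, using the uniform convergence of the $2L\times 2L$ determinant to its deterministic limit (which you have); this simultaneously yields the exact count $|\mathcal L|$ of eigenvalues of $\hat S_N$ beyond $S^++\varepsilon$ and the individual convergences $\hat\lambda_j\asto\Lambda_j$. Replace your Weyl step with this Hurwitz/argument-principle count and the proof goes through.

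Two minor remarks. Your $\bar V$ has columns that are essentially standard Gaussians in $\CC^N$, so $\|\bar V\|$ is of order $\sqrt N$, not $O(1)$; the perturbation $n^{-1/2}(A\bar V^*+\bar V A^*)$ is still bounded, so nothing breaks, but the cross block $n^{-1/2}A^*Q_z^\circ\bar V$ vanishes by concentration (as you correctly indicate later), not by scaling alone. And Davis--Kahan is unnecessary: once Theorem~\ref{th:1} gives $\|\hat C_N-\hat S_N\|\asto 0$ and the contour $\Gamma_j$ separates the cluster at $\Lambda_j$ from the rest of the spectrum of both matrices, the two resolvents agree uniformly on $\Gamma_j$ and the Cauchy integral transfers directly.
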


Item 0.\@ in Theorem~\ref{th:2} provides a necessary and sufficient condition, i.e., $p_j>p_-$, for the existence of outlying eigenvalues in the spectrum of $\hat{C}_N$. In turn, this provides a means to estimate each $p_j$, $j\in\mathcal L$, along with bilinear forms involving $a_j$, from $\hat{\lambda}_j$ and $\hat{u}_j$. It is important here to note that, although the right-edge of the spectrum of $\mu$ is $S_\mu^+$, due to the little control on $\tau_i$ in practice (in particular some of the $\tau_i$ may freely be arbitrarily large), isolated eigenvalues may be found infinitely often beyond $S_\mu^+$ which do not carry information. This is why the (possibly pessimistic) choice of $S^+$ as an eigenvalue discrimination threshold was made. The major potency of the robust estimator $\hat{C}_N$ is indeed to be able to maintain these non informative eigenvalues below the known value $S^+$. As such, eigenvalues found above $S^+$ must contain information about $A$ (at least with high probability) and this information can be retrieved, while isolated eigenvalues found below $S^+$ may arise from spurious values of $\tau_i$, therefore containing no relevant information, or may contain relevant information but that cannot be trusted. 

Figure~\ref{fig:histo_C} and Figure~\ref{fig:histo_YY} provide the histogram and limiting spectral distribution of $\hat{C}_N$ and $\frac1nYY^*$, $Y=[y_1,\ldots,y_n]$, respectively, for $u(x)=(1+\alpha)/(\alpha+x)$, $\alpha=0.2$, $N=200$, $n=1000$, $\tau_i$ i.i.d.\@ equal in distribution to $t^2(\beta-2)\beta^{-1}$ with $t$ a Student-t random scalar of parameter $\beta=100$, and $L=2$ with $p_1=p_2=1$, $a_1=a(\theta_1)$, $a_2=a(\theta_2)$, $\theta_1=10^\circ$, $\theta_2=12^\circ$, $a(\theta)$ being defined in Remark~\ref{rem:examples} (as well as in Assumption~\ref{ass:arrayprocessing} below). These curves confirm that, while the limiting spectral measure of $\frac1nYY^*$ is unbounded, that of $\hat{C}_N$ is bounded. The numerically evaluated values of $S_\mu^+$ and $S^+$ are reported in Figure~\ref{fig:histo_C}. They reveal a rather close proximity between both values. In terms of empirical eigenvalues, note the particularly large gap between the isolated eigenvalues of $\hat{C}_N$ and the $N-2$ smallest ones, which may seem at first somewhat surprising for $p_1=p_2=1$ since this setting induces a ratio $1$ between the power carried by information versus noise (indeed, $A^*A\simeq I_2$ while $\EE[\tau_iw_iw_i^*]=I_N$); this in fact results from the function $u$ which, in attenuating the rare samples of large amplitudes, significantly reduces the noise power but only weakly affects the information part which has roughly constant amplitude across the samples. Also observe from Figure~\ref{fig:histo_YY} that, as predicted, the largest two eigenvalues of $\frac1nYY^*$ do not isolate from the majority of the eigenvalues.

\begin{figure}[h!]
  \centering
  \begin{tikzpicture}[font=\footnotesize]
    \renewcommand{\axisdefaulttryminticks}{4} 
    \tikzstyle{every major grid}+=[style=densely dashed]       
    \tikzstyle{every axis y label}+=[yshift=-10pt] 
    \tikzstyle{every axis x label}+=[yshift=5pt]
    \tikzstyle{every axis legend}+=[cells={anchor=west},fill=white,
        at={(0.98,0.98)}, anchor=north east, font=\scriptsize ]
    \begin{axis}[
      xmin=0,
      ymin=0,
      xmax=1.2,
      ymax=8,
      grid=major,
      ymajorgrids=false,
      scaled ticks=true,
      bar width=1pt,
      xlabel={Eigenvalues},
      ylabel={Density}
      ]
      \addplot[ybar,area legend,fill=lightgray] plot coordinates{
(0.035,1.000000)(0.045,5.000000)(0.055,7.500000)(0.065,6.500000)(0.075,6.000000)(0.085,6.000000)(0.095,6.500000)(0.105,6.500000)(0.115,5.000000)(0.125,5.000000)(0.135,5.500000)(0.145,4.000000)(0.155,4.500000)(0.165,4.000000)(0.175,3.500000)(0.185,4.000000)(0.195,3.000000)(0.205,2.500000)(0.215,3.500000)(0.225,2.500000)(0.235,2.000000)(0.245,2.000000)(0.255,1.000000)(0.265,1.500000)(0.275,0.500000)(0.975,0.500000)(1.155,0.500000)
      };
      \addplot[black,line width=1pt] plot coordinates{
(0.034,0.0)(0.035,0.439542)(0.036,2.510349)(0.037,3.440712)(0.038,4.161353)(0.039,4.624231)(0.040,5.031836)(0.041,5.380654)(0.042,5.651088)(0.043,5.915932)(0.044,6.087717)(0.045,6.272923)(0.046,6.434383)(0.047,6.552407)(0.048,6.659086)(0.049,6.735943)(0.050,6.827175)(0.051,6.879654)(0.052,6.947642)(0.053,6.993367)(0.054,7.041975)(0.055,7.076681)(0.056,7.081445)(0.057,7.121571)(0.058,7.131253)(0.059,7.145184)(0.060,7.133597)(0.061,7.134712)(0.062,7.149019)(0.063,7.131673)(0.064,7.126910)(0.065,7.127476)(0.066,7.108826)(0.067,7.098651)(0.068,7.083594)(0.069,7.051320)(0.070,7.040792)(0.071,7.023223)(0.072,6.987149)(0.073,6.964919)(0.074,6.953972)(0.075,6.919333)(0.076,6.888480)(0.077,6.867257)(0.078,6.831701)(0.079,6.812041)(0.080,6.772036)(0.081,6.756584)(0.082,6.725277)(0.083,6.691205)(0.084,6.655820)(0.085,6.631118)(0.086,6.585424)(0.087,6.564353)(0.088,6.528563)(0.089,6.491112)(0.090,6.464228)(0.091,6.433983)(0.092,6.391470)(0.093,6.353344)(0.094,6.322149)(0.095,6.295199)(0.096,6.259295)(0.097,6.230740)(0.098,6.187958)(0.099,6.151201)(0.100,6.115372)(0.101,6.093490)(0.102,6.047301)(0.103,6.019986)(0.104,5.991129)(0.105,5.956934)(0.106,5.912574)(0.107,5.884253)(0.108,5.842347)(0.109,5.819346)(0.110,5.783926)(0.111,5.751665)(0.112,5.706455)(0.113,5.680853)(0.114,5.650956)(0.115,5.609900)(0.116,5.581258)(0.117,5.539023)(0.118,5.509720)(0.119,5.483611)(0.120,5.440488)(0.121,5.416076)(0.122,5.382333)(0.123,5.340904)(0.124,5.316446)(0.125,5.279370)(0.126,5.241931)(0.127,5.219632)(0.128,5.175879)(0.129,5.144547)(0.130,5.113915)(0.131,5.086139)(0.132,5.046811)(0.133,5.021367)(0.134,4.983882)(0.135,4.951360)(0.136,4.929986)(0.137,4.887541)(0.138,4.858840)(0.139,4.835850)(0.140,4.804666)(0.141,4.768260)(0.142,4.732630)(0.143,4.702090)(0.144,4.679983)(0.145,4.638717)(0.146,4.609061)(0.147,4.587221)(0.148,4.546211)(0.149,4.525794)(0.150,4.487385)(0.151,4.457159)(0.152,4.433372)(0.153,4.396939)(0.154,4.373181)(0.155,4.332894)(0.156,4.309643)(0.157,4.283036)(0.158,4.244244)(0.159,4.222754)(0.160,4.183256)(0.161,4.160121)(0.162,4.122784)(0.163,4.095409)(0.164,4.063474)(0.165,4.045347)(0.166,4.008367)(0.167,3.975641)(0.168,3.946131)(0.169,3.922266)(0.170,3.894003)(0.171,3.868395)(0.172,3.830833)(0.173,3.804565)(0.174,3.779781)(0.175,3.752085)(0.176,3.717402)(0.177,3.687767)(0.178,3.663681)(0.179,3.625893)(0.180,3.601504)(0.181,3.569784)(0.182,3.548727)(0.183,3.515673)(0.184,3.481307)(0.185,3.452636)(0.186,3.434195)(0.187,3.400548)(0.188,3.370254)(0.189,3.335470)(0.190,3.306648)(0.191,3.289306)(0.192,3.251775)(0.193,3.226414)(0.194,3.194332)(0.195,3.172839)(0.196,3.145445)(0.197,3.105569)(0.198,3.087284)(0.199,3.049657)(0.200,3.028130)(0.201,2.991310)(0.202,2.966255)(0.203,2.933301)(0.204,2.901840)(0.205,2.873169)(0.206,2.850956)(0.207,2.824951)(0.208,2.795224)(0.209,2.755528)(0.210,2.730848)(0.211,2.705981)(0.212,2.666418)(0.213,2.638540)(0.214,2.606931)(0.215,2.579346)(0.216,2.556726)(0.217,2.523739)(0.218,2.483348)(0.219,2.463473)(0.220,2.422561)(0.221,2.401499)(0.222,2.359245)(0.223,2.339083)(0.224,2.298503)(0.225,2.267202)(0.226,2.243568)(0.227,2.212097)(0.228,2.170318)(0.229,2.135987)(0.230,2.106770)(0.231,2.084102)(0.232,2.038273)(0.233,2.017561)(0.234,1.968173)(0.235,1.944227)(0.236,1.899766)(0.237,1.864208)(0.238,1.843946)(0.239,1.800254)(0.240,1.758676)(0.241,1.724252)(0.242,1.693442)(0.243,1.644469)(0.244,1.623037)(0.245,1.578750)(0.246,1.528462)(0.247,1.486242)(0.248,1.444239)(0.249,1.401819)(0.250,1.364784)(0.251,1.314473)(0.252,1.267924)(0.253,1.222053)(0.254,1.172143)(0.255,1.121536)(0.256,1.069021)(0.257,1.014139)(0.258,0.965222)(0.259,0.896360)(0.260,0.831361)(0.261,0.761482)(0.262,0.688268)(0.263,0.600249)(0.264,0.503538)(0.265,0.385427)(0.266,0.230654)(0.267,0.072993)(0.268,0.024278)(0.269,0.012774)(0.270,0.008801)(0.271,0.007346)(0.272,0.006380)(0.273,0.005790)(0.274,0.005268)(0.275,0.004840)(0.276,0.004491)(0.277,0.004200)(0.278,0.003951)(0.279,0.003736)
      };
      \node at (axis cs:0.45,2.5) {\tiny $S^+$};
      \draw[<-] (axis cs:0.3191,0) |- (axis cs:0.45,2.2);
      \node at (axis cs:0.45,3.5) {\tiny $S_\mu^+$};
      \draw[<-] (axis cs:0.27,0) |- (axis cs:0.45,3.2);
      \node at (axis cs:1.1,3.5) {\tiny $\Lambda_1$};
      \draw[<-] (axis cs:1.1,0) -- (axis cs:1.1,3.2);
      \legend{ {Eigenvalues of $\hat{C}_N$},{Limiting spectral measure $\mu$} }
    \end{axis}
  \end{tikzpicture}
  \caption{Histogram of the eigenvalues of $\hat{C}_N$ against the limiting spectral measure, for $u(x)=(1+\alpha)/(\alpha+x)$ with $\alpha=0.2$, $L=2$, $p_1=p_2=1$, $N=200$, $n=1000$, Student-t impulsions.}
  \label{fig:histo_C}
\end{figure}
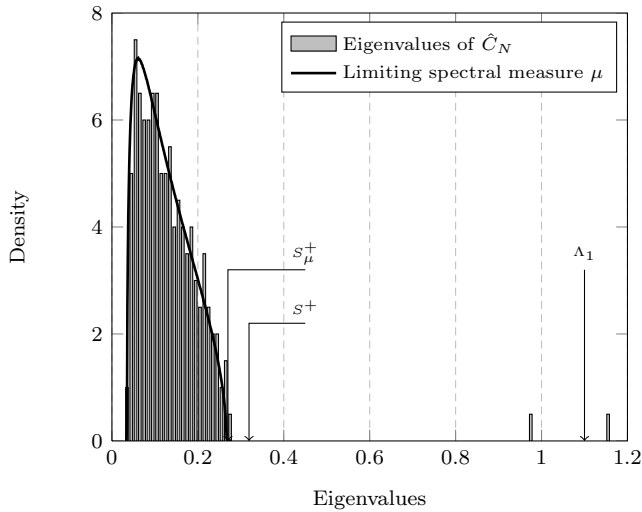

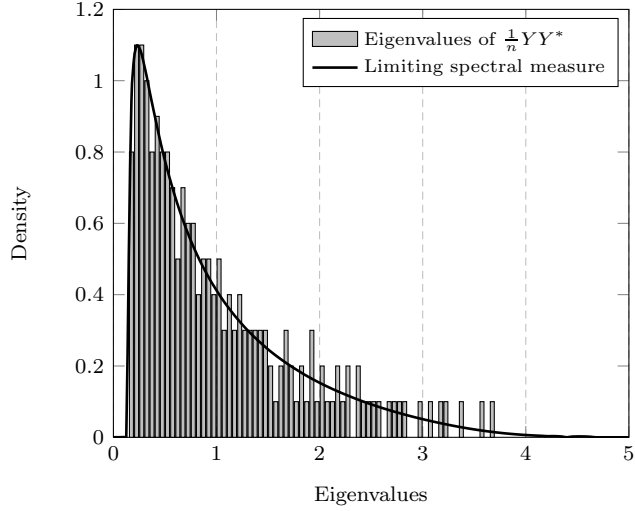
\begin{figure}[h!]
  \centering
  \begin{tikzpicture}[font=\footnotesize]
    \renewcommand{\axisdefaulttryminticks}{4} 
    \tikzstyle{every major grid}+=[style=densely dashed]       
    \tikzstyle{every axis y label}+=[yshift=-10pt] 
    \tikzstyle{every axis x label}+=[yshift=5pt]
    \tikzstyle{every axis legend}+=[cells={anchor=west},fill=white,
        at={(0.98,0.98)}, anchor=north east, font=\scriptsize ]
    \begin{axis}[
      xmin=0,
      ymin=0,
      xmax=5,
      ymax=1.2,
      grid=major,
      ymajorgrids=false,
      scaled ticks=true,
      bar width=1.5pt,
      xlabel={Eigenvalues},
      ylabel={Density}
      ]
      \addplot[ybar,area legend,fill=lightgray] plot coordinates{
	      (0.175,0.800000)(0.225,1.100000)(0.275,1.100000)(0.325,1.000000)(0.375,0.800000)(0.425,0.900000)(0.475,0.800000)(0.525,0.800000)(0.575,0.700000)(0.625,0.500000)(0.675,0.700000)(0.725,0.600000)(0.775,0.600000)(0.825,0.400000)(0.875,0.500000)(0.925,0.500000)(0.975,0.400000)(1.025,0.500000)(1.075,0.300000)(1.125,0.400000)(1.175,0.300000)(1.225,0.400000)(1.275,0.300000)(1.325,0.300000)(1.375,0.300000)(1.425,0.300000)(1.475,0.300000)(1.525,0.200000)(1.575,0.100000)(1.625,0.200000)(1.675,0.300000)(1.725,0.200000)(1.775,0.100000)(1.825,0.200000)(1.875,0.100000)(1.925,0.300000)(1.975,0.100000)(2.025,0.200000)(2.075,0.100000)(2.125,0.100000)(2.175,0.200000)(2.225,0.100000)(2.275,0.200000)(2.375,0.200000)(2.425,0.100000)(2.475,0.100000)(2.525,0.100000)(2.575,0.100000)(2.675,0.100000)(2.725,0.100000)(2.775,0.100000)(2.825,0.100000)(2.975,0.100000)(3.075,0.100000)(3.175,0.100000)(3.225,0.100000)(3.375,0.100000)(3.575,0.100000)(3.675,0.100000)
      };
      \addplot[black,smooth,line width=1pt] plot coordinates{
(0.000,0.000171)(0.025,0.000212)(0.050,0.000273)(0.075,0.000374)(0.100,0.000579)(0.125,0.001207)(0.150,0.517830)(0.175,0.942715)(0.200,1.061325)(0.225,1.097338)(0.250,1.093024)(0.275,1.073534)(0.300,1.041905)(0.325,1.009186)(0.350,0.972110)(0.375,0.937074)(0.400,0.903203)(0.425,0.869455)(0.450,0.836516)(0.475,0.805259)(0.500,0.776209)(0.525,0.748698)(0.550,0.722593)(0.575,0.697634)(0.600,0.673586)(0.625,0.650996)(0.650,0.629083)(0.675,0.608769)(0.700,0.588760)(0.725,0.570339)(0.750,0.552241)(0.775,0.535375)(0.800,0.519289)(0.825,0.503724)(0.850,0.488866)(0.875,0.474665)(0.900,0.461346)(0.925,0.448148)(0.950,0.435510)(0.975,0.423876)(1.000,0.412195)(1.025,0.400933)(1.050,0.390083)(1.075,0.379629)(1.100,0.369788)(1.125,0.360155)(1.150,0.350815)(1.175,0.341772)(1.200,0.333015)(1.225,0.324534)(1.250,0.316320)(1.275,0.308275)(1.300,0.300645)(1.325,0.293254)(1.350,0.286077)(1.375,0.279102)(1.400,0.272320)(1.425,0.265724)(1.450,0.259307)(1.475,0.253062)(1.500,0.246660)(1.525,0.240784)(1.550,0.235074)(1.575,0.229517)(1.600,0.224105)(1.625,0.218832)(1.650,0.213693)(1.675,0.208680)(1.700,0.203789)(1.725,0.199015)(1.750,0.194352)(1.775,0.189797)(1.800,0.185345)(1.825,0.180994)(1.850,0.176469)(1.875,0.172328)(1.900,0.168265)(1.925,0.164295)(1.950,0.160416)(1.975,0.156623)(2.000,0.152914)(2.025,0.149285)(2.050,0.145735)(2.075,0.142261)(2.100,0.138859)(2.125,0.135528)(2.150,0.132264)(2.175,0.129067)(2.200,0.125933)(2.225,0.122860)(2.250,0.119847)(2.275,0.116891)(2.300,0.113991)(2.325,0.111083)(2.350,0.108260)(2.375,0.105503)(2.400,0.102798)(2.425,0.100144)(2.450,0.097540)(2.475,0.094986)(2.500,0.092479)(2.525,0.090019)(2.550,0.087605)(2.575,0.085234)(2.600,0.082908)(2.625,0.080623)(2.650,0.078380)(2.675,0.076178)(2.700,0.074015)(2.725,0.071891)(2.750,0.069804)(2.775,0.067755)(2.800,0.065742)(2.825,0.063765)(2.850,0.061822)(2.875,0.059914)(2.900,0.058039)(2.925,0.056198)(2.950,0.054389)(2.975,0.052611)(3.000,0.050970)(3.025,0.049155)(3.050,0.047542)(3.075,0.045871)(3.100,0.044230)(3.125,0.042620)(3.150,0.041041)(3.175,0.039492)(3.200,0.037973)(3.225,0.036486)(3.250,0.035028)(3.275,0.033602)(3.300,0.032206)(3.325,0.030841)(3.350,0.029508)(3.375,0.028205)(3.400,0.026934)(3.425,0.025695)(3.450,0.024488)(3.475,0.023313)(3.500,0.022171)(3.525,0.021061)(3.550,0.019985)(3.575,0.018942)(3.600,0.017933)(3.625,0.016958)(3.650,0.016017)(3.675,0.015111)(3.700,0.014239)(3.725,0.013402)(3.750,0.012600)(3.775,0.011833)(3.800,0.011100)(3.825,0.010402)(3.850,0.009771)(3.875,0.009136)(3.900,0.008540)(3.925,0.007975)(3.950,0.007442)(3.975,0.006938)(4.000,0.006466)(4.025,0.006023)(4.050,0.005600)(4.075,0.005197)(4.100,0.004849)(4.125,0.004557)(4.150,0.004236)(4.175,0.003831)(4.200,0.003314)(4.225,0.003206)(4.250,0.003344)(4.275,0.003304)(4.300,0.003111)(4.325,0.002740)(4.350,0.002216)(4.375,0.000944)(4.400,0.000133)(4.425,0.001138)(4.450,0.002045)(4.475,0.002311)(4.500,0.002483)(4.525,0.002563)(4.550,0.002427)(4.575,0.002341)(4.600,0.002017)(4.625,0.001584)(4.650,0.000827)(4.675,0.000127)(4.700,0.000012)(4.725,0.000005)(4.750,0.000004)(4.775,0.000003)(4.800,0.000003)(4.825,0.000003)(4.850,0.000003)(4.875,0.000003)(4.900,0.000003)(4.925,0.000003)(4.950,0.000003)(4.975,0.000003)(5.000,0.000003)(5.025,0.000003)(5.050,0.000002)(5.075,0.000002)(5.100,0.000002)(5.125,0.000002)(5.150,0.000002)(5.175,0.000002)(5.200,0.000002)(5.225,0.000002)(5.250,0.000002)(5.275,0.000002)(5.300,0.000002)(5.325,0.000002)(5.350,0.000002)(5.375,0.000002)(5.400,0.000002)(5.425,0.000002)(5.450,0.000002)(5.475,0.000002)(5.500,0.000002)(5.525,0.000002)(5.550,0.000002)(5.575,0.000002)(5.600,0.000002)(5.625,0.000002)(5.650,0.000002)(5.675,0.000002)(5.700,0.000002)(5.725,0.000002)(5.750,0.000002)(5.775,0.000002)(5.800,0.000002)(5.825,0.000002)(5.850,0.000002)(5.875,0.000002)(5.900,0.000001)(5.925,0.000001)(5.950,0.000001)(5.975,0.000001)(6.000,0.000001)(6.025,0.000001)(6.050,0.000001)(6.075,0.000001)(6.100,0.000001)(6.125,0.000001)(6.150,0.000001)(6.175,0.000001)(6.200,0.000001)(6.225,0.000001)(6.250,0.000001)(6.275,0.000001)(6.300,0.000001)(6.325,0.000001)(6.350,0.000001)(6.375,0.000001)(6.400,0.000001)(6.425,0.000001)(6.450,0.000001)(6.475,0.000001)(6.500,0.000001)(6.525,0.000001)(6.550,0.000001)(6.575,0.000001)(6.600,0.000001)(6.625,0.000001)(6.650,0.000001)(6.675,0.000001)(6.700,0.000001)(6.725,0.000001)(6.750,0.000001)(6.775,0.000001)(6.800,0.000001)(6.825,0.000001)(6.850,0.000001)(6.875,0.000001)(6.900,0.000001)(6.925,0.000001)(6.950,0.000001)(6.975,0.000001)(7.000,0.000001)(7.025,0.000001)(7.050,0.000001)(7.075,0.000001)(7.100,0.000001)(7.125,0.000001)(7.150,0.000001)(7.175,0.000001)(7.200,0.000001)(7.225,0.000001)(7.250,0.000001)(7.275,0.000001)(7.300,0.000001)(7.325,0.000001)(7.350,0.000001)(7.375,0.000001)(7.400,0.000001)(7.425,0.000001)(7.450,0.000001)(7.475,0.000001)(7.500,0.000001)(7.525,0.000001)(7.550,0.000001)(7.575,0.000001)(7.600,0.000001)(7.625,0.000001)(7.650,0.000001)(7.675,0.000001)(7.700,0.000001)(7.725,0.000001)(7.750,0.000001)(7.775,0.000001)(7.800,0.000001)(7.825,0.000001)(7.850,0.000001)(7.875,0.000001)(7.900,0.000001)(7.925,0.000001)(7.950,0.000001)(7.975,0.000001)(8.000,0.000001)
      };
      \legend{ {Eigenvalues of $\frac1nYY^*$},{Limiting spectral measure} }
    \end{axis}
  \end{tikzpicture}
  \caption{Histogram of the eigenvalues of $\frac1nYY^*$ against the limiting spectral measure, $L=2$, $p_1=p_2=1$, $N=200$, $n=1000$, Sudent-t impulsions.}
  \label{fig:histo_YY}
\end{figure}

\bigskip

Items 1.\@ and 2.\@ in Theorem~\ref{th:2} then provide a means to estimate $p_1,\ldots,p_{|\mathcal L|}$ and bilinear forms involving the eigenvectors of $AA^*$. In particular, if $p_k$ has multiplicity one in $\diag(p_1,\ldots,p_L)$, the summations in Item 2.\@ are irrelevant and we obtain an estimator for $a^*u_ku_k^*b$. These however explicitly rely on $\nu$ which, for practical purposes, might be of limited interest if the $\tau_i$ are statistically unknown. It turns out, from a careful understanding of $\gamma$, that
\begin{align*}
	\gamma - \hat{\gamma}_n \asto 0
\end{align*}
where
\begin{align}
	\label{eq:hatgamma}
	\hat{\gamma}_n &\triangleq \frac1n\sum_{i=1}^n \frac1Ny_i^*\hat{C}_{(i)}^{-1}y_i
\end{align}
and $\hat{C}_{(i)}=\hat{C}_N-\frac1n u(\frac1Ny_i^*\hat{C}_N^{-1}y_i)y_iy_i^*$.
Also, for any $M>0$,
\begin{align*}
	\max_{ \substack{1\leq j\leq n\\ \tau_j\leq M }} \left| \tau_j - \hat{\tau}_j\right| &\asto 0, \quad
	\max_{ \substack{1\leq j\leq n\\ \tau_j>M }} \left| 1 - \tau_j^{-1}\hat{\tau}_j\right| \asto 0
\end{align*}
where
\begin{align}
	\label{eq:hattau}
	\hat{\tau}_i \triangleq \frac1{\hat{\gamma}_n}\frac1Ny_i^*\hat{C}_{(i)}^{-1}y_i.
\end{align}
Details of these results are provided in Section~\ref{sec:proof}. Letting $\varepsilon>0$ small, for $x\in (S^++\varepsilon,\infty)$ and for all large $n$ a.s., we then denote $\hat{\delta}(x)$ the unique negative solution to\footnote{Remark here that, since $\hat{\tau}_i$, similar to $\tau_i$, may be found away from ${\rm Supp}(\nu)$, $\hat{\delta}(x)$ may not be defined everywhere in $(S_\mu^+,S^+)$ but is defined beyond $S^++\varepsilon$ for $n$ large a.s.}
\begin{align}
	\label{eq:hatdelta}
	\hat\delta(x) &= c_n \left( -x + \frac1n \sum_{i=1}^n \frac{\hat\tau_i v_c(\hat{\tau}_i\hat{\gamma}_n)}{1+\hat\delta(x)\hat{\tau}_iv_c(\hat{\tau}_i\hat{\gamma}_n)} \right)^{-1}.
\end{align}

From this, we then deduce the following alternative set of power and bilinear form estimators.

\begin{theorem}[Robust estimation for unknown $\nu$]
	\label{th:3}
	With the same notations as in Theorem~\ref{th:2}, and with $\hat{\gamma}_n$, $\hat{\tau_i}$, and $\hat{\delta}$ defined in \eqref{eq:hatgamma}--\eqref{eq:hatdelta}, we have the following results.

\medskip

{\sf 1. Purely empirical power estimation.} For each $j\in \mathcal L$,
\begin{align*}
	- \left( \hat{\delta}(\hat{\lambda}_j) \frac1N\sum_{i=1}^n \frac{v(\hat{\tau}_i\hat{\gamma}_n)}{1+\hat{\delta}(\hat{\lambda}_j)\hat{\tau}_iv(\hat{\tau}_i\hat{\gamma}_n)} \right)^{-1} \asto p_j.
\end{align*}

{\sf 2. Purely empirical bilinear form estimation.} For each $a,b\in\CC^N$ with $\Vert a\Vert=\Vert b\Vert=1$, and each $j\in\mathcal L$,
	\begin{align*}
		\sum_{k,p_k=p_j} a^*u_ku_k^* b - \sum_{k,p_k=p_j} \hat{w}_k a^* \hat{u}_k\hat{u}_k^* b  \asto 0
	\end{align*}
where
\begin{align*}
	\hat{w}_k &= \frac{\displaystyle \frac1n\sum_{i=1}^n \frac{v(\hat{\tau}_i\hat{\gamma}_n)}{\left( 1+\hat{\delta}(\hat{\lambda}_k)\hat{\tau}_i v(\hat{\tau}_i\hat{\gamma}_n) \right)^2} }{\displaystyle \frac1n\sum_{i=1}^n \frac{v(\hat{\tau}_i\hat{\gamma}_n)}{ 1+\hat{\delta}(\hat{\lambda}_k)\hat{\tau}_i v(\hat{\tau}_i\hat{\gamma}_n)} \left( 1 - \frac1N\sum_{i=1}^n \frac{\hat{\delta}(\hat{\lambda}_k)^2\tau_i^2v(\hat{\tau}_i\hat{\gamma}_n)^2}{\left( 1+\hat{\delta}(\hat{\lambda}_k)\hat{\tau}_i v(\hat{\tau}_i\hat{\gamma}_n) \right)^2} \right) }.
\end{align*}
\end{theorem}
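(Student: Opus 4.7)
The strategy is to derive Theorem~\ref{th:3} from Theorem~\ref{th:2} by showing that every empirical object appearing in Theorem~\ref{th:3} converges almost surely to its population counterpart in Theorem~\ref{th:2}, and then concluding by continuity. The empirical quantities to control are $\hat{\gamma}_n$, $\{\hat{\tau}_i\}$, $\hat{\delta}(\hat{\lambda}_j)$, and empirical averages of the form $\frac{1}{n}\sum_i f(\hat{\tau}_i,\hat{\gamma}_n,\hat{\delta}(\hat{\lambda}_j))$ for the three rational functions $f(t,\gamma,\delta)$ that appear as summands in Item~1 and in the numerator and denominator of $\hat{w}_k$ in Item~2.

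The convergences $\hat{\gamma}_n-\gamma\asto 0$ and the bounded/ratio control $\max_{\tau_i\leq M}|\hat{\tau}_i-\tau_i|\asto 0$, $\max_{\tau_i>M}|1-\hat{\tau}_i/\tau_i|\asto 0$ are stated before Theorem~\ref{th:3} and are taken as given. To obtain $\hat{\delta}(\hat{\lambda}_j)\asto \delta(\Lambda_j)$, I view $\delta(x)$ and $\hat{\delta}(x)$ as unique negative roots of the same defining equation, with the pair $(\gamma,\nu)$ replaced by $(\hat{\gamma}_n,\hat{\nu}_n)$, $\hat{\nu}_n\triangleq\frac{1}{n}\sum_i{\bm\delta}_{\hat{\tau}_i}$. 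Strict monotonicity of the defining equation in $\delta$ on $(-\gamma/\psi_{c,\infty},0)$, combined with $\hat{\lambda}_j\asto \Lambda_j$ from Theorem~\ref{th:2} Item~0, continuity of $\delta(\cdot)$ on $(S^+,\infty)$, and the empirical-integral convergence stated below, then yields the claim.

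The core technical step is to show, for each of the three $f$'s,
\begin{align*}
\frac{1}{n}\sum_{i=1}^n f(\hat{\tau}_i,\hat{\gamma}_n,\hat{\delta}(\hat{\lambda}_j)) \asto \int f(t,\gamma,\delta(\Lambda_j))\,\nu(dt).
\end{align*}
Decompose this as (a) swapping $\hat{\tau}_i\mapsto\tau_i$ and $(\hat{\gamma}_n,\hat{\delta}(\hat{\lambda}_j))\mapsto(\gamma,\delta(\Lambda_j))$ inside the sum, followed by (b) applying $\nu_n\to\nu$ weakly. Step (b) is immediate because $tv_c(t\gamma)=\gamma^{-1}\psi_c(t\gamma)$ is bounded, $|\delta(\Lambda_j)|<\gamma/\psi_{c,\infty}$ for $\Lambda_j>S^+$ keeps denominators bounded away from zero, and $v_c(t\gamma)=\psi_c(t\gamma)/t$ is continuous and decays at infinity, so each $f$ is bounded and continuous in $t$.

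The main obstacle is step (a), handled by a truncation argument since $\nu$ may have unbounded support while $\hat{\tau}_i-\tau_i$ is controlled absolutely only for bounded $\tau_i$ and only in ratio for large $\tau_i$. Split the sum at $\tau_i\leq M$ versus $\tau_i>M$: on the bounded part, uniform continuity of $f$ together with $\max_{\tau_i\leq M}|\hat{\tau}_i-\tau_i|\to 0$, $\hat{\gamma}_n\to\gamma$, and $\hat{\delta}(\hat{\lambda}_j)\to\delta(\Lambda_j)$ yields the desired closeness; on the tail, the decay $|f(t,\gamma,\delta)|=O(1/t)$ combined with $\hat{\tau}_i/\tau_i\to 1$ uniformly makes the contribution of order $\frac{1}{n}\sum_{\tau_i>M}\tau_i^{-1}$, which vanishes as $M\to\infty$ by Assumption~\ref{ass:u} item~4. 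The transition from $v$ to $v_c$ inside the sums of Theorem~\ref{th:3} is harmless since $c_n\to c$ implies $v-v_c\to 0$ uniformly by the same split. Once the empirical-integral convergence is in place, Item~1 follows by direct substitution into Theorem~\ref{th:2} Item~1, and Item~2 follows by substituting the three empirical integrals into $w_k$ and applying Theorem~\ref{th:2} Item~2 with the continuous bilinear form $a^*u_k u_k^*b$, after verifying that the denominator of $\hat{w}_k$ is eventually bounded away from zero by the same argument applied to $w_k$.
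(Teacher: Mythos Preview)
Your overall strategy matches the paper's: reduce Theorem~\ref{th:3} to Theorem~\ref{th:2} by establishing $\hat{\gamma}_n\to\gamma$, the $\hat{\tau}_i\approx\tau_i$ controls, $\hat{\delta}(\hat{\lambda}_j)\to\delta(\Lambda_j)$, and then that each empirical average converges to the corresponding $\nu$-integral. The paper's proof is terse on this last step (``with the same line of arguments''), so your explicit truncation scheme is a reasonable expansion of what the paper leaves implicit.

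There are two slips. First, the third summand $f_3(t,\gamma,\delta)=\delta^2 t^2 v_c(t\gamma)^2/(1+\delta t v_c(t\gamma))^2$ does \emph{not} decay like $O(1/t)$: since $tv_c(t\gamma)=\gamma^{-1}\psi_c(t\gamma)\to\gamma^{-1}\psi_{c,\infty}$, $f_3$ tends to a nonzero limit at infinity, so your tail bound fails for this term as written. The fix is immediate: on $\{\tau_i>M\}$ both $\psi_c(\hat{\tau}_i\hat{\gamma}_n)$ and $\psi_c(\tau_i\gamma)$ are uniformly close to $\psi_{c,\infty}$ for $M$ large, hence $|f_3(\hat{\tau}_i,\hat{\gamma}_n,\hat{\delta})-f_3(\tau_i,\gamma,\delta)|\to 0$ uniformly on the tail and the truncation still works. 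Second, the vanishing of $\frac{1}{n}\sum_{\tau_i>M}\tau_i^{-1}$ as $M\to\infty$ has nothing to do with Assumption~\ref{ass:u} item~4; it holds simply because $\tau_i^{-1}<M^{-1}$ on that set.

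On $\hat{\delta}(\hat{\lambda}_j)\to\delta(\Lambda_j)$, the paper takes a different route from your monotonicity argument: it writes $\hat{\delta}(x)-\delta(x)$ explicitly from the two fixed-point equations, isolates a factor of $\hat{\delta}(x)-\delta(x)$ on the right with coefficient strictly below one for $x$ large, shows the remaining terms are $o(1)$, and then invokes Vitali's theorem to upgrade to uniform convergence on compacts of $(S^++\varepsilon,\infty)$. Your implicit-root approach is a legitimate alternative, but make sure the ``empirical-integral convergence stated below'' you invoke there means convergence of $\int g(t,\hat{\gamma}_n,\delta)\,\hat{\nu}_n(dt)\to\int g(t,\gamma,\delta)\,\nu(dt)$ at \emph{fixed} $\delta$ (or uniformly over $\delta$ in a compact subinterval of $(-\gamma/\psi_{c,\infty},0)$), not the convergence at $\hat{\delta}(\hat{\lambda}_j)$ itself, which would be circular.
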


Theorem~\ref{th:3} provides a means to estimate powers and bilinear forms without any statistical knowledge on the $\tau_i$, which are individually estimated. It is interesting to note that, since $\nu$ is only a limiting distribution, for practical systems, there is a priori no advantage in using the knowledge of $\nu$ or not. In particular, if $n$ is not too large in practice or if $\nu$ has heavy tails, it is highly probable that $\nu_n$ be quite distinct from $\nu$, leading the estimators in Theorem~\ref{th:1} to be likely less accurate than the estimators in Theorem~\ref{th:2}. Conversely, if $N$ is not too large, $\hat{\tau}_i$ may be a weak estimate for $\tau_i$ so that, if $\nu$ has much lighter tails, the estimators of Theorem~\ref{th:1} may have a better advantage. Theoretical performance comparison between both schemes would require to exhibit central limit theorems for these quantities, which we discuss in Section~\ref{sec:conclusion} but goes here beyond the scope of the present work.

\subsection{Application to angle estimation}

An important application of Theorem~\ref{th:1} and Theorem~\ref{th:2} is found in the context of array processing, briefly evoked in the second item of Remark~\ref{rem:examples}, in which $a_i=a(\theta_i)$ for some $\theta_i\in [0,2\pi)$. For theoretical convenience, we use the classical linear array representation for $a_i$ as follows.
\begin{assumption}
	\label{ass:arrayprocessing}
	For $i\in\{1,\ldots,L\}$, $a_i=a(\theta_i)$ with $\theta_1,\ldots,\theta_L$ distinct and, for $d>0$ and $\theta\in[0,2\pi)$,
	\begin{align*}
		a(\theta)=N^{-\frac12}[\exp(2\pi\imath dj\sin(\theta))]_{j=0}^{N-1}.
	\end{align*}
\end{assumption}
The objective in this specific model is to estimate $\theta_1,\ldots,\theta_L$ from the observations $y_1,\ldots,y_n$. In the regime $n\gg N$ with non-impulsive noise, this is efficiently performed by the traditional multiple signal classification (MUSIC) algorithm from \citep{SCH86}. Using the fact that the vectors $a(\theta_i)$, $i\in\{1,\ldots,L\}$, are orthogonal to the subspace spanned by the eigenvectors with eigenvalue $1$ of $\EE[y_1y_1^*]=AA^*+I_N$, the algorithm consists in retrieving the deepest minima of the nonnegative localization function $\hat\eta$ defined for $\theta\in [0,2\pi)$ by
\begin{align*}
	\hat\eta(\theta) &= a(\theta)^*\Pi_{\frac1nYY^*}a(\theta)
\end{align*}
where $\Pi_{\frac1nYY^*}$ is a projection matrix on the subspace associated with the $N-L$ smallest eigenvalues of $\frac1nYY^*$. Indeed, as $\frac1nYY^*$ is an almost surely consistent estimate for $\EE[y_1y_1^*]$ in the large $n$ regime, $\hat\eta(\theta)\asto \eta(\theta)$ where
\begin{align*}
	\eta(\theta) &= a(\theta)^*\Pi_{\EE[y_1y_1^*]} a(\theta)
\end{align*}
with here $\Pi_{\EE[y_1y_1^*]}$ a projection matrix on the subspace associated with the eigenvalue $1$ in $\EE[y_1y_1^*]$; as such, $\hat\eta(\theta)\asto 0$ for $\theta\in\{\theta_1,\ldots,\theta_L\}$ and to a positive quantity otherwise. In \citep{MES08}, Mestre proved that this algorithm is however inconsistent in the regime of Assumption~\ref{ass:c}. This led to \citep{MES08b} in which an improved estimator (the G-MUSIC estimator) for $\theta_1,\ldots,\theta_L$ was designed, however for a more involved model than the spiked model (i.e., $L$ is assumed commensurable with $N$). In \citep{LOU10b}, a spiked model hypothesis was then assumed (i.e., with $L$ small compared to $N,n$) which unfolded into a more practical and more theoretically tractable spiked G-MUSIC estimator. Similar to MUSIC, the latter consists in determining the deepest minima of an alternative localization function $\hat\eta_{\rm G}(\theta)$, which we shall define in a moment.

Although improved with respect to MUSIC, both algorithms still rely on exploiting the largest isolated eigenvalues of $\frac1nYY^*$ and the asymptotic boundedness of the noise spectrum. From the discussions in Section~\ref{sec:intro} and after Theorem~\ref{th:2}, under the generic Assumption~\ref{ass:y} with $\tau_i$ allowed to grow unbounded, these methods are now unreliable and in fact inefficient. From Item~2.\@ in both Theorem~\ref{th:2} and Theorem~\ref{th:3}, it is now possible to provide a consistent estimation method based on two novel localization functions $\hat\eta_{\rm RG}$ and $\hat\eta_{\rm RG}^{\rm emp}$. The resulting algorithms are from now on referred to as robust G-MUSIC and empirical robust G-MUSIC, respectively.

\begin{corollary}[Robust G-MUSIC]
	\label{co:RGMUSIC}
	Let Assumptions~\ref{ass:y}--\ref{ass:arrayprocessing} hold. Let $0<\kappa<\min_{i,j}|\theta_i-\theta_j|$ and denote $\mathcal R_i^\kappa=[\theta_i-\kappa/2,\theta_i+\kappa/2]$. Also define $\hat{\eta}_{\rm RG}(\theta)$ and $\hat{\eta}^{\rm emp}_{\rm RG}(\theta)$ as
	\begin{align*}
		\hat{\eta}_{\rm RG}(\theta) &= 1 - \sum_{k=1}^{|\mathcal L|} w_k a(\theta)^* \hat{u}_k\hat{u}_k a(\theta) \\
		\hat{\eta}^{\rm emp}_{\rm RG}(\theta) &= 1 - \sum_{k=1}^{|\mathcal L|} \hat{w}_k a(\theta)^* \hat{u}_k\hat{u}_k a(\theta)
	\end{align*}
	where we used the notations from Theorems~\ref{th:2} and \ref{th:3}. Then, for each $j\in\mathcal L$,
	\begin{align*}
		\hat{\theta}_j &\asto \theta_j \\
		\hat{\theta}^{\rm emp}_j &\asto \theta_j
	\end{align*}
	where
	\begin{align*}
		\hat{\theta}_j &\triangleq \argmin_{ \theta \in \mathcal R_j^\kappa } \left\{ \hat{\eta}_{\rm RG}(\theta) \right\} \\
		\hat{\theta}^{\rm emp}_j &\triangleq \argmin_{ \theta \in \mathcal R_j^\kappa } \left\{ \hat{\eta}^{\rm emp}_{\rm RG}(\theta) \right\}.
	\end{align*}
\end{corollary}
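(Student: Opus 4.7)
\textbf{Proof plan for Corollary~\ref{co:RGMUSIC}.}

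The plan is to reduce the problem to a direct pointwise application of the bilinear form estimators in Theorems~\ref{th:2} and~\ref{th:3}, and then pass from pointwise convergence of the localization functions to convergence of their argmins.

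First, I would set $a=b=a(\theta)$ in Item~2 of Theorem~\ref{th:2} (resp.\ Theorem~\ref{th:3}), which is legitimate since under Assumption~\ref{ass:arrayprocessing} the steering vector $a(\theta)$ is deterministic with $\Vert a(\theta)\Vert=1$. Summing the resulting relation over all equivalence classes $\{k:p_k=p_j\}$ with $p_j>p_-$ yields, for every fixed $\theta$,
\begin{align*}
\hat{\eta}_{\rm RG}(\theta)-\eta_N(\theta)\asto 0, \qquad \hat{\eta}_{\rm RG}^{\rm emp}(\theta)-\eta_N(\theta)\asto 0,
\end{align*}
where $\eta_N(\theta)\triangleq 1-\sum_{k\in\mathcal L}|a(\theta)^*u_k|^2$.

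Second, I would identify the asymptotic shape of $\eta_N$ under Assumption~\ref{ass:arrayprocessing}. A direct Dirichlet-kernel computation gives $a(\theta_k)^*a(\theta_l)=O(N^{-1})$ for $\theta_k\neq \theta_l$, so $A^*A\to\diag(p_1,\ldots,p_L)$ (consistent with Assumption~\ref{ass:y}.(\ref{item:a})), and the spectral projector $\sum_{k:p_k=p_j}u_ku_k^*$ of $AA^*$ becomes asymptotically equal, in operator norm, to $\sum_{k:p_k=p_j}a(\theta_k)a(\theta_k)^*$. Consequently, at $\theta=\theta_j$ for $j\in\mathcal L$, $\eta_N(\theta_j)\to 0$, and for any fixed $\theta\in\mathcal R_j^\kappa\setminus\{\theta_j\}$ (recall $\kappa<\min_{i\neq j}|\theta_i-\theta_j|$), the same Dirichlet-kernel bound yields $|a(\theta)^*a(\theta_k)|^2\to 0$ for every $k$ and therefore $\eta_N(\theta)\to 1$.

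Finally, I would upgrade the pointwise convergence to a form strong enough to drive the argmins. The clean route is to observe that $\hat{\eta}_{\rm RG}(\theta)-\eta_N(\theta)=a(\theta)^*(P_n-M_n)a(\theta)$ with $P_n\triangleq\sum_{k\in\mathcal L}u_ku_k^*$ and $M_n\triangleq\sum_{k\in\mathcal L}w_k\hat{u}_k\hat{u}_k^*$ both of fixed rank at most $|\mathcal L|$ and uniformly bounded in operator norm, so that controlling $\Vert P_n-M_n\Vert$ gives a $\theta$-uniform bound via $|\hat{\eta}_{\rm RG}(\theta)-\eta_N(\theta)|\leq \Vert P_n-M_n\Vert$. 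The relation obtained from Item~2 of Theorem~\ref{th:2}, tested against a basis of the (finite-dimensional) image of $P_n-M_n$, combined with the rank boundedness, yields $\Vert P_n-M_n\Vert\asto 0$ and hence uniform convergence of $\hat{\eta}_{\rm RG}$ to $\eta_N$ on $\mathcal R_j^\kappa$. The argmin convergence $\hat{\theta}_j\asto\theta_j$ then follows from the standard MUSIC argument: by compactness extract any a.s.\ limit point $\theta^*$ of $\hat{\theta}_j$; the inequality $\hat{\eta}_{\rm RG}(\hat{\theta}_j)\leq \hat{\eta}_{\rm RG}(\theta_j)\asto 0$ forces $\eta_N(\theta^*)=0$ in the limit, which by step two is possible only at $\theta^*=\theta_j$. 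The proof for $\hat{\theta}_j^{\rm emp}$ is identical, using Theorem~\ref{th:3} in place of Theorem~\ref{th:2}.

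The main obstacle is the last paragraph: turning the pointwise bilinear-form statement into an operator-norm statement on $P_n-M_n$. The Lipschitz constant of $\theta\mapsto a(\theta)$ scales as $O(N)$, so a naive $\varepsilon$-net argument fails; the fact that the relevant matrices have \emph{fixed} rank $|\mathcal L|$, independent of $N$, is what makes the argument go through, via testing the bilinear form against an orthonormal basis of $\mathrm{span}(u_k,\hat{u}_k)_{k\in\mathcal L}$.
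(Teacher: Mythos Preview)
Your first two steps (pointwise convergence via Theorems~\ref{th:2} and~\ref{th:3}, and identification of the limiting localization function via the Dirichlet kernel) are fine and match the paper. The gap is in your third step: the operator-norm claim $\Vert P_n-M_n\Vert\asto 0$ is \emph{false}, so the shortcut to uniformity in $\theta$ collapses.

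Indeed, since the $\hat u_k$ are orthonormal, $\hat u_j^* M_n \hat u_j = w_j$. A direct inspection of the formula for $w_k$ in Theorem~\ref{th:2} (using $\delta(\hat\lambda_k)<0$, hence $0<1+\delta(\hat\lambda_k)\,t\,v_c(t\gamma)<1$ on ${\rm Supp}(\nu)$) shows that $w_j$ converges a.s.\ to a limit strictly larger than $1$. But $P_n$ is an orthogonal projector, so $\hat u_j^* P_n \hat u_j\leq 1$. Hence $\hat u_j^*(P_n-M_n)\hat u_j\leq 1-w_j$ stays bounded away from $0$, and $\Vert P_n-M_n\Vert\not\to 0$. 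The underlying reason is that Item~2 of Theorem~\ref{th:2} is proved only for \emph{deterministic} test vectors $a,b$ (the moment bounds of Lemma~\ref{le:1} need $a,b$ independent of the data); you cannot legitimately test against the random $\hat u_k$. The empirical eigenvectors $\hat u_k$ retain a non-vanishing component orthogonal to the signal subspace --- this is exactly what the correction $w_k>1$ compensates for --- and that component rules out any operator-norm control.

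The paper's route to uniformity in $\theta$ is the one you dismissed: discretize $[0,2\pi)$ into $N^2$ subintervals, use the $O(N)$-Lipschitz bound on $\theta\mapsto a(\theta)$ to control each subinterval to order $O(N^{-1})$, and handle the union over the $N^2+1$ grid points by taking $p$ large enough in Lemma~\ref{le:1} so that Markov plus union bound plus Borel--Cantelli goes through (this follows \cite[Theorem~3]{HAC13b}). Your instinct that a naive $\varepsilon$-net fails was correct; the remedy is not the fixed rank of $P_n-M_n$ but arbitrarily-high-order moment control at each grid point.
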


With the same reasoning as in Remark~\ref{rem:v=1}, it is now easy to check that, letting the $v_c$ or $v$ functions be replaced by the constant~$1$ in the expressions of $w_k$ and $\hat{w}_k$, respectively, we fall back on G-MUSIC schemes devised in e.g., \citep{LOU10b}. In what follows, we then define $\hat\eta_{\rm G}(\theta)$ and $\hat{\eta}_{\rm G}^{\rm emp}(\theta)$ similarly to $\hat\eta_{\rm RG}(\theta)$ and $\hat{\eta}_{\rm RG}^{\rm emp}(\theta)$ but with the functions $v_c$ and $v$ replaced by the constant~$1$ and with the couples $(\hat{\lambda}_k,\hat{u}_k)$ replaced by the $k$-th largest eigenvalue and associated eigenvectors of $\frac1nYY^*$. For a further comparison of the various methods, we also denote by $\hat\eta_{\rm R}(\theta)$ the robust counterpart to $\hat{\eta}(\theta)$ defined by $\hat\eta_{\rm R}(\theta)=a(\theta)^*\Pi_{\hat{C}_N} a(\theta)$ with $\Pi_{\hat{C}_N}$ a projection matrix on the subspace associated with the $N-L$ smallest eigenvalues of $\hat{C}_N$.

Simulation curves are provided below which compare the performance of the various improved MUSIC techniques. Since the methods based on the extraction of $\delta(\hat{\lambda}_i)$ may be void when this value does not exist, we blindly proceed by solving the fixed-point equation defining $\delta(\hat{\lambda}_i)$ thanks to the standard fixed-point algorithm until convergence or until a maximum number of iterations is reached. This effect is in fact marginal as it is theoretically highly probable that eigenvalues be found beyond $S_\mu^+$ for each finite $N,n$. We also assume $\mathcal L=\{1,\ldots,L\}$ even if this does not hold, which in practice one cannot anticipate. Voluntarily disrupting from the theoretical claims of Theorems~\ref{th:1}--\ref{th:3} will allow for an observation of problems arising when the assumptions are not fully satisfied. In all simulation figures, we consider $u(x)=(1+\alpha)(\alpha+x)^{-1}$ with $\alpha=0.2$, $N=20$, $n=100$, $L=2$, $\theta_1=10^\circ$, $\theta_2=12^\circ$. The noise impulsions are of two types: (i) single outlier impulsion for which $\tau_i=1$, $i\in\{1,\ldots,n-1\}$ and $\tau_n=100$, or (ii) Student impulsions for which $\tau_i=t^2(\beta-2)\beta^{-1}$ with $t$ a Student-t random variable with parameter $\beta=100$ (the normalization ensures $\EE[\tau_1]=1$). 

\begin{figure}[h!]
  \centering
  \begin{tikzpicture}[font=\footnotesize]
    \renewcommand{\axisdefaulttryminticks}{4} 
    \tikzstyle{every major grid}+=[style=densely dashed]       
    \tikzstyle{every axis legend}+=[cells={anchor=west},fill=white,
        at={(1.02,1.00)}, anchor=north west, font=\scriptsize ]
    \begin{axis}[
      xmin=4,
      ymin=-.15,
      xmax=18,
      ymax=1,
      grid=none,
      ymajorgrids=false,
      scaled ticks=true,
      mark repeat=5,
      mark options=solid,
      mark size=1.5pt,
      xlabel={$\theta$ [deg]},
      ylabel={Localization functions $\hat{\eta}_X(\theta)$}
      ]
      \addplot[black,mark=*,line width=0.5pt] plot coordinates{
	      (4.01,0.992486)(4.11,0.990594)(4.21,0.987632)(4.31,0.983647)(4.41,0.978734)(4.51,0.973034)(4.61,0.966728)(4.71,0.960035)(4.81,0.953201)(4.91,0.946490)(5.01,0.940175)(5.11,0.934526)(5.21,0.929796)(5.31,0.926211)(5.41,0.923958)(5.51,0.923173)(5.61,0.923931)(5.71,0.926241)(5.81,0.930036)(5.91,0.935173)(6.01,0.941434)(6.11,0.948521)(6.21,0.956071)(6.31,0.963656)(6.41,0.970798)(6.51,0.976978)(6.61,0.981657)(6.71,0.984285)(6.81,0.984324)(6.91,0.981264)(7.01,0.974639)(7.11,0.964046)(7.21,0.949160)(7.31,0.929745)(7.41,0.905668)(7.51,0.876908)(7.61,0.843556)(7.71,0.805818)(7.81,0.764016)(7.91,0.718579)(8.01,0.670034)(8.11,0.618991)(8.21,0.566133)(8.31,0.512194)(8.41,0.457937)(8.51,0.404139)(8.61,0.351566)(8.71,0.300951)(8.81,0.252973)(8.91,0.208243)(9.01,0.167280)(9.11,0.130503)(9.21,0.098216)(9.31,0.070606)(9.41,0.047732)(9.51,0.029534)(9.61,0.015833)(9.71,0.006339)(9.81,0.000667)(9.91,-0.001653)(10.01,-0.001155)(10.11,0.001584)(10.21,0.005962)(10.31,0.011374)(10.41,0.017235)(10.51,0.023002)(10.61,0.028189)(10.71,0.032389)(10.81,0.035285)(10.91,0.036662)(11.01,0.036413)(11.11,0.034546)(11.21,0.031181)(11.31,0.026545)(11.41,0.020967)(11.51,0.014861)(11.61,0.008719)(11.71,0.003085)(11.81,-0.001456)(11.91,-0.004306)(12.01,-0.004870)(12.11,-0.002580)(12.21,0.003087)(12.31,0.012586)(12.41,0.026291)(12.51,0.044480)(12.61,0.067321)(12.71,0.094870)(12.81,0.127059)(12.91,0.163704)(13.01,0.204502)(13.11,0.249043)(13.21,0.296816)(13.31,0.347227)(13.41,0.399611)(13.51,0.453252)(13.61,0.507405)(13.71,0.561310)(13.81,0.614217)(13.91,0.665406)(14.01,0.714204)(14.11,0.760002)(14.21,0.802271)(14.31,0.840572)(14.41,0.874570)(14.51,0.904036)(14.61,0.928849)(14.71,0.949002)(14.81,0.964590)(14.91,0.975809)(15.01,0.982947)(15.11,0.986369)(15.21,0.986504)(15.31,0.983835)(15.41,0.978876)(15.51,0.972161)(15.61,0.964225)(15.71,0.955590)(15.81,0.946750)(15.91,0.938159)(16.01,0.930219)(16.11,0.923272)(16.21,0.917593)(16.31,0.913386)(16.41,0.910781)(16.51,0.909838)(16.61,0.910548)(16.71,0.912837)(16.81,0.916577)(16.91,0.921590)(17.01,0.927659)(17.11,0.934539)(17.21,0.941968)(17.31,0.949677)(17.41,0.957400)(17.51,0.964884)(17.61,0.971898)(17.71,0.978240)(17.81,0.983742)(17.91,0.988276)(18.01,0.991756)	      
      };
      \addplot[black,densely dashed,mark=*,line width=0.5pt] plot coordinates{
(4.01,0.992466)(4.11,0.990562)(4.21,0.987582)(4.31,0.983573)(4.41,0.978631)(4.51,0.972897)(4.61,0.966554)(4.71,0.959823)(4.81,0.952949)(4.91,0.946200)(5.01,0.939850)(5.11,0.934169)(5.21,0.929414)(5.31,0.925811)(5.41,0.923548)(5.51,0.922762)(5.61,0.923528)(5.71,0.925855)(5.81,0.929675)(5.91,0.934846)(6.01,0.941146)(6.11,0.948277)(6.21,0.955873)(6.31,0.963504)(6.41,0.970687)(6.51,0.976904)(6.61,0.981608)(6.71,0.984250)(6.81,0.984287)(6.91,0.981206)(7.01,0.974540)(7.11,0.963883)(7.21,0.948908)(7.31,0.929380)(7.41,0.905166)(7.51,0.876243)(7.61,0.842704)(7.71,0.804760)(7.81,0.762732)(7.91,0.717055)(8.01,0.668257)(8.11,0.616955)(8.21,0.563836)(8.31,0.509638)(8.41,0.455130)(8.51,0.401095)(8.61,0.348301)(8.71,0.297487)(8.81,0.249337)(8.91,0.204464)(9.01,0.163390)(9.11,0.126535)(9.21,0.094205)(9.31,0.066586)(9.41,0.043739)(9.51,0.025599)(9.61,0.011985)(9.71,0.002605)(9.81,-0.002932)(9.91,-0.005099)(10.01,-0.004435)(10.11,-0.001523)(10.21,0.003029)(10.31,0.008611)(10.41,0.014633)(10.51,0.020547)(10.61,0.025863)(10.71,0.030170)(10.81,0.033147)(10.91,0.034578)(11.01,0.034354)(11.11,0.032484)(11.21,0.029087)(11.31,0.024392)(11.41,0.018730)(11.51,0.012519)(11.61,0.006254)(11.71,0.000484)(11.81,-0.004202)(11.91,-0.007200)(12.01,-0.007910)(12.11,-0.005759)(12.21,-0.000220)(12.31,0.009168)(12.41,0.022783)(12.51,0.040906)(12.61,0.063709)(12.71,0.091248)(12.81,0.123459)(12.91,0.160156)(13.01,0.201038)(13.11,0.245692)(13.21,0.293607)(13.31,0.344184)(13.41,0.396756)(13.51,0.450604)(13.61,0.504977)(13.71,0.559112)(13.81,0.612255)(13.91,0.663680)(14.01,0.712710)(14.11,0.758732)(14.21,0.801213)(14.31,0.839711)(14.41,0.873889)(14.51,0.903513)(14.61,0.928464)(14.71,0.948731)(14.81,0.964410)(14.91,0.975698)(15.01,0.982883)(15.11,0.986331)(15.21,0.986474)(15.31,0.983797)(15.41,0.978816)(15.51,0.972067)(15.61,0.964090)(15.71,0.955408)(15.81,0.946519)(15.91,0.937879)(16.01,0.929892)(16.11,0.922903)(16.21,0.917188)(16.31,0.912952)(16.41,0.910328)(16.51,0.909374)(16.61,0.910083)(16.71,0.912380)(16.81,0.916136)(16.91,0.921172)(17.01,0.927270)(17.11,0.934185)(17.21,0.941654)(17.31,0.949404)(17.41,0.957169)(17.51,0.964694)(17.61,0.971747)(17.71,0.978125)(17.81,0.983658)(17.91,0.988219)(18.01,0.991719)
      };
      \addplot[black,mark=o,line width=0.5pt] plot coordinates{
	      (4.01,0.978274)(4.11,0.979868)(4.21,0.981527)(4.31,0.983166)(4.41,0.984696)(4.51,0.986029)(4.61,0.987081)(4.71,0.987777)(4.81,0.988051)(4.91,0.987856)(5.01,0.987161)(5.11,0.985955)(5.21,0.984250)(5.31,0.982081)(5.41,0.979505)(5.51,0.976602)(5.61,0.973471)(5.71,0.970226)(5.81,0.966996)(5.91,0.963918)(6.01,0.961132)(6.11,0.958773)(6.21,0.956971)(6.31,0.955836)(6.41,0.955461)(6.51,0.955909)(6.61,0.957212)(6.71,0.959363)(6.81,0.962316)(6.91,0.965981)(7.01,0.970223)(7.11,0.974864)(7.21,0.979682)(7.31,0.984415)(7.41,0.988764)(7.51,0.992404)(7.61,0.994984)(7.71,0.996139)(7.81,0.995500)(7.91,0.992700)(8.01,0.987389)(8.11,0.979240)(8.21,0.967961)(8.31,0.953306)(8.41,0.935082)(8.51,0.913157)(8.61,0.887470)(8.71,0.858033)(8.81,0.824934)(8.91,0.788345)(9.01,0.748512)(9.11,0.705765)(9.21,0.660502)(9.31,0.613193)(9.41,0.564367)(9.51,0.514605)(9.61,0.464528)(9.71,0.414787)(9.81,0.366051)(9.91,0.318990)(10.01,0.274265)(10.11,0.232515)(10.21,0.194343)(10.31,0.160303)(10.41,0.130891)(10.51,0.106531)(10.61,0.087574)(10.71,0.074282)(10.81,0.066833)(10.91,0.065308)(11.01,0.069700)(11.11,0.079906)(11.21,0.095738)(11.31,0.116923)(11.41,0.143109)(11.51,0.173877)(11.61,0.208749)(11.71,0.247194)(11.81,0.288647)(11.91,0.332512)(12.01,0.378183)(12.11,0.425047)(12.21,0.472501)(12.31,0.519961)(12.41,0.566872)(12.51,0.612718)(12.61,0.657027)(12.71,0.699380)(12.81,0.739416)(12.91,0.776835)(13.01,0.811399)(13.11,0.842934)(13.21,0.871328)(13.31,0.896530)(13.41,0.918548)(13.51,0.937440)(13.61,0.953315)(13.71,0.966321)(13.81,0.976644)(13.91,0.984498)(14.01,0.990118)(14.11,0.993755)(14.21,0.995670)(14.31,0.996124)(14.41,0.995375)(14.51,0.993674)(14.61,0.991256)(14.71,0.988342)(14.81,0.985129)(14.91,0.981795)(15.01,0.978493)(15.11,0.975352)(15.21,0.972474)(15.31,0.969939)(15.41,0.967804)(15.51,0.966105)(15.61,0.964858)(15.71,0.964061)(15.81,0.963700)(15.91,0.963747)(16.01,0.964166)(16.11,0.964913)(16.21,0.965942)(16.31,0.967201)(16.41,0.968643)(16.51,0.970218)(16.61,0.971882)(16.71,0.973595)(16.81,0.975319)(16.91,0.977026)(17.01,0.978691)(17.11,0.980294)(17.21,0.981823)(17.31,0.983270)(17.41,0.984629)(17.51,0.985903)(17.61,0.987092)(17.71,0.988202)(17.81,0.989240)(17.91,0.990211)(18.01,0.991123)	      
      };
      \addplot[black,densely dashed,mark=o,line width=0.5pt] plot coordinates{
	      (4.01,0.972232)(4.11,0.970874)(4.21,0.966831)(4.31,0.960088)(4.41,0.950731)(4.51,0.938948)(4.61,0.925026)(4.71,0.909342)(4.81,0.892357)(4.91,0.874596)(5.01,0.856634)(5.11,0.839077)(5.21,0.822537)(5.31,0.807615)(5.41,0.794872)(5.51,0.784813)(5.61,0.777859)(5.71,0.774335)(5.81,0.774450)(5.91,0.778287)(6.01,0.785792)(6.11,0.796772)(6.21,0.810897)(6.31,0.827703)(6.41,0.846603)(6.51,0.866905)(6.61,0.887824)(6.71,0.908513)(6.81,0.928079)(6.91,0.945617)(7.01,0.960234)(7.11,0.971078)(7.21,0.977366)(7.31,0.978409)(7.41,0.973634)(7.51,0.962608)(7.61,0.945046)(7.71,0.920828)(7.81,0.890002)(7.91,0.852788)(8.01,0.809566)(8.11,0.760875)(8.21,0.707392)(8.31,0.649916)(8.41,0.589342)(8.51,0.526641)(8.61,0.462826)(8.71,0.398926)(8.81,0.335959)(8.91,0.274898)(9.01,0.216648)(9.11,0.162020)(9.21,0.111713)(9.31,0.066294)(9.41,0.026186)(9.51,-0.008338)(9.61,-0.037157)(9.71,-0.060304)(9.81,-0.077950)(9.91,-0.090400)(10.01,-0.098074)(10.11,-0.101483)(10.21,-0.101214)(10.31,-0.097900)(10.41,-0.092197)(10.51,-0.084757)(10.61,-0.076207)(10.71,-0.067122)(10.81,-0.058006)(10.91,-0.049276)(11.01,-0.041245)(11.11,-0.034117)(11.21,-0.027978)(11.31,-0.022798)(11.41,-0.018435)(11.51,-0.014642)(11.61,-0.011079)(11.71,-0.007330)(11.81,-0.002922)(11.91,0.002657)(12.01,0.009934)(12.11,0.019427)(12.21,0.031627)(12.31,0.046974)(12.41,0.065837)(12.51,0.088500)(12.61,0.115146)(12.71,0.145848)(12.81,0.180561)(12.91,0.219124)(13.01,0.261257)(13.11,0.306571)(13.21,0.354573)(13.31,0.404687)(13.41,0.456262)(13.51,0.508595)(13.61,0.560950)(13.71,0.612583)(13.81,0.662755)(13.91,0.710762)(14.01,0.755950)(14.11,0.797736)(14.21,0.835621)(14.31,0.869205)(14.41,0.898196)(14.51,0.922417)(14.61,0.941808)(14.71,0.956426)(14.81,0.966438)(14.91,0.972117)(15.01,0.973827)(15.11,0.972013)(15.21,0.967183)(15.31,0.959893)(15.41,0.950728)(15.51,0.940282)(15.61,0.929146)(15.71,0.917882)(15.81,0.907017)(15.91,0.897021)(16.01,0.888300)(16.11,0.881184)(16.21,0.875925)(16.31,0.872690)(16.41,0.871560)(16.51,0.872535)(16.61,0.875537)(16.71,0.880417)(16.81,0.886967)(16.91,0.894924)(17.01,0.903991)(17.11,0.913843)(17.21,0.924141)(17.31,0.934549)(17.41,0.944739)(17.51,0.954410)(17.61,0.963289)(17.71,0.971147)(17.81,0.977803)(17.91,0.983124)(18.01,0.987032)	      
      };
      \addplot[black,mark=triangle,line width=0.5pt] plot coordinates{
	      (4.01,0.998748)(4.11,0.996434)(4.21,0.992939)(4.31,0.988338)(4.41,0.982754)(4.51,0.976364)(4.61,0.969388)(4.71,0.962084)(4.81,0.954739)(4.91,0.947657)(5.01,0.941145)(5.11,0.935503)(5.21,0.931004)(5.31,0.927888)(5.41,0.926341)(5.51,0.926490)(5.61,0.928387)(5.71,0.932007)(5.81,0.937236)(5.91,0.943877)(6.01,0.951646)(6.11,0.960178)(6.21,0.969036)(6.31,0.977721)(6.41,0.985687)(6.51,0.992360)(6.61,0.997155)(6.71,0.999495)(6.81,0.998836)(6.91,0.994686)(7.01,0.986627)(7.11,0.974332)(7.21,0.957586)(7.31,0.936297)(7.41,0.910504)(7.51,0.880392)(7.61,0.846286)(7.71,0.808651)(7.81,0.768087)(7.91,0.725315)(8.01,0.681159)(8.11,0.636531)(8.21,0.592401)(8.31,0.549776)(8.41,0.509668)(8.51,0.473063)(8.61,0.440898)(8.71,0.414023)(8.81,0.393181)(8.91,0.378977)(9.01,0.371864)(9.11,0.372118)(9.21,0.379833)(9.31,0.394909)(9.41,0.417054)(9.51,0.445786)(9.61,0.480447)(9.71,0.520213)(9.81,0.564122)(9.91,0.611092)(10.01,0.659956)(10.11,0.709490)(10.21,0.758447)(10.31,0.805595)(10.41,0.849746)(10.51,0.889794)(10.61,0.924745)(10.71,0.953740)(10.81,0.976088)(10.91,0.991277)(11.01,0.998989)(11.11,0.999113)(11.21,0.991740)(11.31,0.977166)(11.41,0.955876)(11.51,0.928534)(11.61,0.895957)(11.71,0.859097)(11.81,0.819009)(11.91,0.776821)(12.01,0.733700)(12.11,0.690822)(12.21,0.649337)(12.31,0.610339)(12.41,0.574835)(12.51,0.543719)(12.61,0.517751)(12.71,0.497538)(12.81,0.483521)(12.91,0.475966)(13.01,0.474963)(13.11,0.480428)(13.21,0.492109)(13.31,0.509602)(13.41,0.532362)(13.51,0.559727)(13.61,0.590940)(13.71,0.625173)(13.81,0.661555)(13.91,0.699194)(14.01,0.737209)(14.11,0.774750)(14.21,0.811025)(14.31,0.845316)(14.41,0.876999)(14.51,0.905557)(14.61,0.930590)(14.71,0.951820)(14.81,0.969092)(14.91,0.982375)(15.01,0.991753)(15.11,0.997418)(15.21,0.999655)(15.31,0.998830)(15.41,0.995375)(15.51,0.989766)(15.61,0.982509)(15.71,0.974120)(15.81,0.965109)(15.91,0.955962)(16.01,0.947128)(16.11,0.939005)(16.21,0.931932)(16.31,0.926180)(16.41,0.921948)(16.51,0.919360)(16.61,0.918465)(16.71,0.919243)(16.81,0.921609)(16.91,0.925419)(17.01,0.930481)(17.11,0.936564)(17.21,0.943411)(17.31,0.950748)(17.41,0.958295)(17.51,0.965780)(17.61,0.972947)(17.71,0.979564)(17.81,0.985432)(17.91,0.990392)(18.01,0.994324)	      
      };
      \addplot[black,mark=square,line width=0.5pt] plot coordinates{
(4.01,0.996788)(4.11,0.994221)(4.21,0.989381)(4.31,0.982331)(4.41,0.973222)(4.51,0.962291)(4.61,0.949849)(4.71,0.936278)(4.81,0.922013)(4.91,0.907535)(5.01,0.893344)(5.11,0.879950)(5.21,0.867848)(5.31,0.857502)(5.41,0.849323)(5.51,0.843655)(5.61,0.840757)(5.71,0.840791)(5.81,0.843815)(5.91,0.849772)(6.01,0.858491)(6.11,0.869690)(6.21,0.882979)(6.31,0.897875)(6.41,0.913811)(6.51,0.930156)(6.61,0.946237)(6.71,0.961358)(6.81,0.974825)(6.91,0.985974)(7.01,0.994188)(7.11,0.998927)(7.21,0.999747)(7.31,0.996316)(7.41,0.988433)(7.51,0.976034)(7.61,0.959203)(7.71,0.938172)(7.81,0.913316)(7.91,0.885147)(8.01,0.854299)(8.11,0.821511)(8.21,0.787607)(8.31,0.753469)(8.41,0.720012)(8.51,0.688155)(8.61,0.658790)(8.71,0.632756)(8.81,0.610811)(8.91,0.593605)(9.01,0.581658)(9.11,0.575345)(9.21,0.574877)(9.31,0.580297)(9.41,0.591476)(9.51,0.608111)(9.61,0.629738)(9.71,0.655742)(9.81,0.685374)(9.91,0.717774)(10.01,0.751997)(10.11,0.787040)(10.21,0.821875)(10.31,0.855476)(10.41,0.886853)(10.51,0.915080)(10.61,0.939324)(10.71,0.958873)(10.81,0.973150)(10.91,0.981736)(11.01,0.984381)(11.11,0.981008)(11.21,0.971718)(11.31,0.956782)(11.41,0.936638)(11.51,0.911868)(11.61,0.883188)(11.71,0.851420)(11.81,0.817470)(11.91,0.782298)(12.01,0.746890)(12.11,0.712228)(12.21,0.679266)(12.31,0.648894)(12.41,0.621922)(12.51,0.599051)(12.61,0.580861)(12.71,0.567790)(12.81,0.560130)(12.91,0.558020)(13.01,0.561445)(13.11,0.570240)(13.21,0.584103)(13.31,0.602601)(13.41,0.625193)(13.51,0.651245)(13.61,0.680052)(13.71,0.710862)(13.81,0.742901)(13.91,0.775393)(14.01,0.807584)(14.11,0.838765)(14.21,0.868289)(14.31,0.895589)(14.41,0.920190)(14.51,0.941718)(14.61,0.959911)(14.71,0.974616)(14.81,0.985791)(14.91,0.993500)(15.01,0.997905)(15.11,0.999257)(15.21,0.997881)(15.31,0.994164)(15.41,0.988537)(15.51,0.981458)(15.61,0.973400)(15.71,0.964829)(15.81,0.956191)(15.91,0.947900)(16.01,0.940324)(16.11,0.933775)(16.21,0.928503)(16.31,0.924691)(16.41,0.922452)(16.51,0.921828)(16.61,0.922796)(16.71,0.925270)(16.81,0.929110)(16.91,0.934129)(17.01,0.940101)(17.11,0.946776)(17.21,0.953888)(17.31,0.961164)(17.41,0.968339)(17.51,0.975164)(17.61,0.981412)(17.71,0.986890)(17.81,0.991441)(17.91,0.994951)(18.01,0.997349)
      };
      \addplot[gray,densely dashed] plot coordinates{
        (4,0)(18,0)
      };
      \addplot[gray,densely dashed] plot coordinates{
        (12,-1)(12,1)
      };
      \addplot[gray,densely dashed] plot coordinates{
        (10,-1)(10,1)
      };
      \legend{ {Robust G-MUSIC}, {Emp. robust G-MUSIC}, {G-MUSIC},{Emp. G-MUSIC},{Robust MUSIC},{MUSIC} }
    \end{axis}
  \end{tikzpicture}
  \caption{Random realization of the localization functions for the various MUSIC estimators, with $N=20$, $n=100$, two sources at $10^\circ$ and $12^\circ$, Student-t impulsions with parameter $\beta=100$, $u(x)=(1+\alpha)/(\alpha+x)$ with $\alpha=0.2$. Powers $p_1=p_2=10^{0.5}=5~{\rm dB}$.}
  \label{fig:oneshot}
\end{figure}
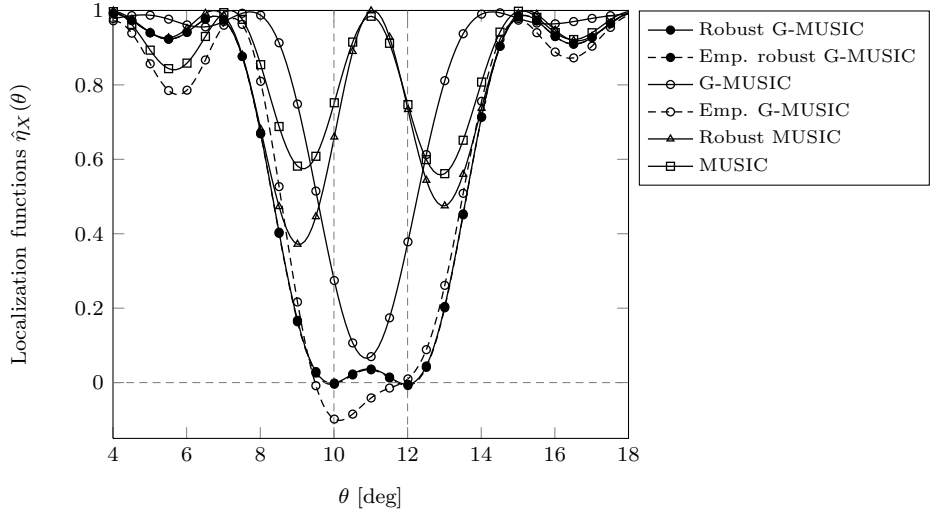

Figure~\ref{fig:oneshot} provides a single realization (but representative of the multiple realizations we simulated) of the various localization functions $\hat{\eta}_X$ and $\hat{\eta}_X^{\rm emp}$ for $\theta$ in the vicinity of $\theta_1,\theta_2$, $X$ being void, $R$, or $RG$. The scenario considered is that of a Student-t noise and $p_1=p_2=1$. The figure confirms the advantage of the methods based on $\hat{C}_N$ over $\frac1nYY^*$ which unfolds from the proper extreme eigenvalue isolation observed under the same setting in Figure~\ref{fig:histo_C} against Figure~\ref{fig:histo_YY}. Due to $N/n$ being non trivial, while the robust G-MUSIC methods accurately discriminate both angles at their precise locations and with appropriate localization function amplitude, the robust MUSIC approach discriminates the two angles at erroneous locations and erroneous localization function amplitude. Benefiting from the random matrix advantage, G-MUSIC in turn behaves better in amplitude than MUSIC but cannot discriminate angles. Observe also here that both empirical and non-empirical robust G-MUSIC approaches behave extremely similar (both curves are visually superimposed), suggesting that with $\beta=100$ the samples from the Student-t distribution represent sufficiently well the actual distribution of $\tau_1v(\tau_1\gamma)$. This no longer holds for G-MUSIC versus empirical G-MUSIC, in which case the approximation of $\nu_n$ by the distribution $\nu$ of $\tau_1$ is not appropriate.

Figure~\ref{fig:perf_Stud} and Figure~\ref{fig:perf_Spike} provide the mean square error performance for the first angle estimation $\EE[|\hat{\theta}_1-\theta_1|^2]$ as a function of the source powers $p_1=p_2$; the estimates are based for each estimator on retrieving the local minima of $\hat{\eta}_X$. For fair comparison, the two deepest minima of the localization functions are extracted and $\hat{\theta}_1$ is declared to be the estimated angle closest to $\theta_1$ (in particular, if a unique minimum is found close to any $\theta_i$, $\hat{\theta}_1$ is attached to this minimum). Figure~\ref{fig:perf_Stud} assumes the Student-t impulsion scenario of Figure~\ref{fig:oneshot}, while Figure~\ref{fig:perf_Spike} is concerned with the outlier impulsion model previously described. Both figures further confirm the advantage brought by the robust G-MUSIC scheme with asymptotic equivalence between empirical or non-empirical in the large source power regime. We observe in particular the outstanding advantage of (robust or not) G-MUSIC methods which perform well at high source power, while standard methods saturate. Interestingly, from Figure~\ref{fig:perf_Stud}, the G-MUSIC schemes perform well in the high source power regime, which corresponds to scenarios in which the noise impulsion amplitudes are often small enough compared to source power to be assumed bounded and G-MUSIC is then consistent. Nonetheless, G-MUSIC never closes the gap with robust G-MUSIC which is likely explained by the much larger spacing between noise and information eigenvalues in the spectrum of $\hat{C}_N$. The situation is different in Figure~\ref{fig:perf_Spike} where G-MUSIC almost meets the performance of robust G-MUSIC at very high power, while performing poorly below $20~{\rm dB}$. This is explained by the presence of a single additional eigenvalue of amplitude around $100$ (i.e., $20$~dB) in the spectrum of $\frac1nYY^*$ which corrupts the G-MUSIC algorithm as long as this amplitude is larger than these of the two informative eigenvalues due to the steering vectors (about $p_1$).

\begin{figure}[h!]
  \centering
  \begin{tikzpicture}[font=\footnotesize]
    \renewcommand{\axisdefaulttryminticks}{4} 
    \tikzstyle{every major grid}+=[style=densely dashed]       
    \tikzstyle{every axis y label}+=[yshift=-10pt] 
    \tikzstyle{every axis x label}+=[yshift=5pt]
    \tikzstyle{every axis legend}+=[cells={anchor=west},fill=white,
        at={(1.02,1.00)}, anchor=north west, font=\scriptsize ]
    \begin{semilogyaxis}[
      xmin=-5,
      xmax=30,
      ymax=.1,
      grid=major,
      ymajorgrids=false,
      scaled ticks=true,
      mark options=solid,
      mark size=1.5pt,
      xlabel={$p_1,p_2$ [dB]},
      ylabel={Mean square error $\EE[|\hat{\theta}_1-\theta_1|^2]$}
      ]
      \addplot[black,smooth,mark=*,line width=0.5pt] plot coordinates{
	      (-5,0.0123206704)(-2.500000e+00,0.0003091066)(0,0.0000739683)(2.500000e+00,0.0000421487)(5,0.0000190099)(7.500000e+00,0.0000089245)(10,0.0000044831)(1.250000e+01,0.0000022308)(15,0.0000011068)(1.750000e+01,0.0000006504)(20,0.0000004055)(2.250000e+01,0.0000002394)(25,0.0000001317)(2.750000e+01,0.0000000736)(30,0.0000000412)	      
      };
      \addplot[black,smooth,densely dashed,mark=*,line width=0.5pt] plot coordinates{
(-5,0.0114846764)(-2.500000e+00,0.0009542529)(0,0.0001646572)(2.500000e+00,0.0000803358)(5,0.0000322975)(7.500000e+00,0.0000127943)(10,0.0000060773)(1.250000e+01,0.0000027184)(15,0.0000011202)(1.750000e+01,0.0000006604)(20,0.0000004078)(2.250000e+01,0.0000002400)(25,0.0000001325)(2.750000e+01,0.0000000737)(30,0.0000000412)
      };
      \addplot[black,smooth,mark=o,line width=0.5pt] plot coordinates{
(-5,0.1079380734)(-2.500000e+00,0.0613164315)(0,0.0188549991)(2.500000e+00,0.0119959252)(5,0.0108074882)(7.500000e+00,0.0045589191)(10,0.0002498616)(1.250000e+01,0.0000454129)(15,0.0000182000)(1.750000e+01,0.0000087839)(20,0.0000045024)(2.250000e+01,0.0000021508)(25,0.0000011034)(2.750000e+01,0.0000005900)(30,0.0000002866)
      };
      \addplot[black,smooth,densely dashed,mark=o,line width=0.5pt] plot coordinates{
(-5,0.1040418047)(-2.500000e+00,0.0480746033)(0,0.0157980302)(2.500000e+00,0.0114400362)(5,0.0106193965)(7.500000e+00,0.0045236076)(10,0.0000972674)(1.250000e+01,0.0000428841)(15,0.0000182238)(1.750000e+01,0.0000088085)(20,0.0000048011)(2.250000e+01,0.0000021286)(25,0.0000010945)(2.750000e+01,0.0000005874)(30,0.0000002883)
      };
      \addplot[black,smooth,mark=triangle,line width=0.5pt] plot coordinates{
(-5,0.040408)(-2.500000e+00,0.009271)(0,0.009180)(2.500000e+00,0.009258)(5,0.009190)(7.500000e+00,0.009199)(10,0.009178)(1.250000e+01,0.009207)(15,0.009174)(1.750000e+01,0.009173)(20,0.009177)(2.250000e+01,0.009184)(25,0.009179)(2.750000e+01,0.009205)(30,0.009177)
      };
      \addplot[black,smooth,mark=square,line width=0.5pt] plot coordinates{
(-5,0.136500)(-2.500000e+00,0.075386)(0,0.052649)(2.500000e+00,0.048044)(5,0.012180)(7.500000e+00,0.009254)(10,0.009213)(1.250000e+01,0.009201)(15,0.009172)(1.750000e+01,0.009169)(20,0.009164)(2.250000e+01,0.009182)(25,0.009180)(2.750000e+01,0.009195)(30,0.009163)
      };
      \legend{ {Robust G-MUSIC}, {Emp. robust G-MUSIC}, {G-MUSIC},{Emp. G-MUSIC},{Robust MUSIC},{MUSIC} }
    \end{semilogyaxis}
  \end{tikzpicture}
  \caption{Means square error performance of the estimation of $\theta_1=10^\circ$, with $N=20$, $n=100$, two sources at $10^\circ$ and $12^\circ$, Student-t impulsions with parameter $\beta=10$, $u(x)=(1+\alpha)/(\alpha+x)$ with $\alpha=0.2$, $p_1=p_2$.}
  \label{fig:perf_Stud}
\end{figure}
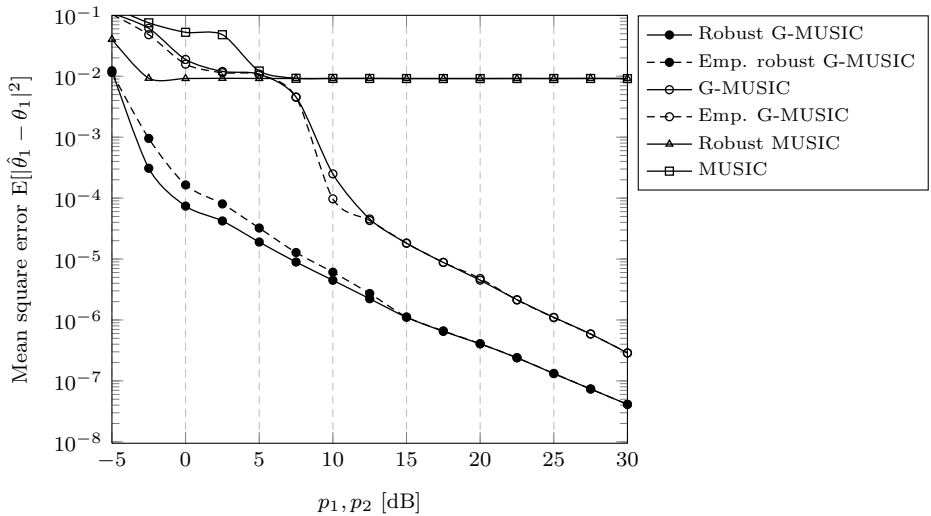

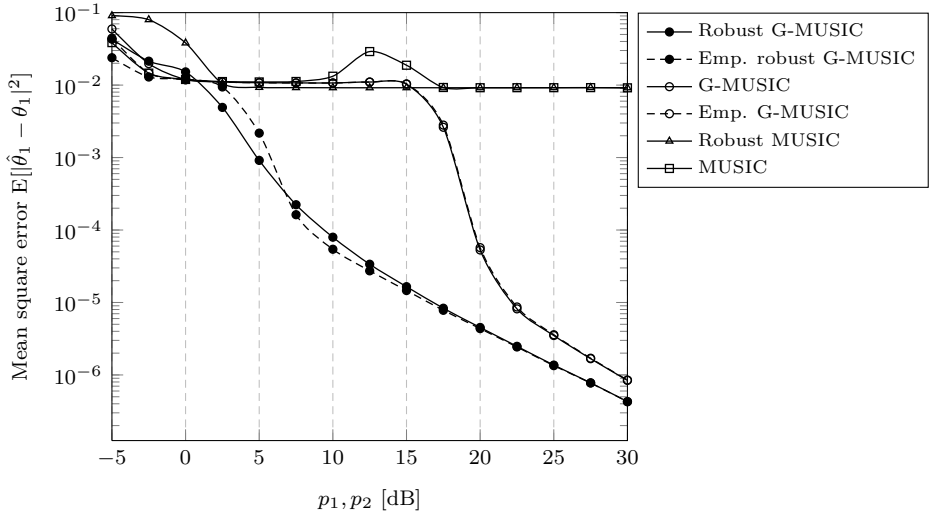
\begin{figure}[h!]
  \centering
  \begin{tikzpicture}[font=\footnotesize]
    \renewcommand{\axisdefaulttryminticks}{4} 
    \tikzstyle{every major grid}+=[style=densely dashed]       
    \tikzstyle{every axis y label}+=[yshift=-10pt] 
    \tikzstyle{every axis x label}+=[yshift=5pt]
    \tikzstyle{every axis legend}+=[cells={anchor=west},fill=white,
        at={(1.02,1.00)}, anchor=north west, font=\scriptsize ]
    \begin{semilogyaxis}[
      xmin=-5,
      xmax=30,
      ymax=.1,
      grid=major,
      ymajorgrids=false,
      scaled ticks=true,
      mark options=solid,
      mark size=1.5pt,
      xlabel={$p_1,p_2$ [dB]},
      ylabel={Mean square error $\EE[|\hat{\theta}_1-\theta_1|^2]$}
      ]
      \addplot[black,smooth,mark=*,line width=0.5pt] plot coordinates{
	      (-5,0.0430211673)(-2.500000e+00,0.0213757241)(0,0.0152115078)(2.500000e+00,0.0049219604)(5,0.0009135585)(7.500000e+00,0.0002234039)(10,0.0000796193)(1.250000e+01,0.0000337174)(15,0.0000165239)(1.750000e+01,0.0000083325)(20,0.0000045246)(2.250000e+01,0.0000024834)(25,0.0000013673)(2.750000e+01,0.0000007791)(30,0.0000004284)	     
      };
      \addplot[black,smooth,densely dashed,mark=*,line width=0.5pt] plot coordinates{
(-5,0.0238763990)(-2.500000e+00,0.0129273097)(0,0.0120808803)(2.500000e+00,0.0094201813)(5,0.0021727982)(7.500000e+00,0.0001627251)(10,0.0000542811)(1.250000e+01,0.0000273388)(15,0.0000146217)(1.750000e+01,0.0000078030)(20,0.0000043603)(2.250000e+01,0.0000024323)(25,0.0000013444)(2.750000e+01,0.0000007740)(30,0.0000004278)
      };
      \addplot[black,smooth,mark=o,line width=0.5pt] plot coordinates{
(-5,0.0595022176)(-2.500000e+00,0.0197784488)(0,0.0119315611)(2.500000e+00,0.0110344067)(5,0.0107941408)(7.500000e+00,0.0107267002)(10,0.0106980516)(1.250000e+01,0.0110224028)(15,0.0102868472)(1.750000e+01,0.0026175157)(20,0.0000530678)(2.250000e+01,0.0000081634)(25,0.0000034996)(2.750000e+01,0.0000016762)(30,0.0000008415)
      };
      \addplot[black,smooth,densely dashed,mark=o,line width=0.5pt] plot coordinates{
(-5,0.0444461715)(-2.500000e+00,0.0146243859)(0,0.0118111476)(2.500000e+00,0.0110121514)(5,0.0107768647)(7.500000e+00,0.0107145634)(10,0.0106859641)(1.250000e+01,0.0109943135)(15,0.0103874302)(1.750000e+01,0.0028065995)(20,0.0000571273)(2.250000e+01,0.0000086684)(25,0.0000035892)(2.750000e+01,0.0000017022)(30,0.0000008499)
      };
      \addplot[black,smooth,mark=triangle,line width=0.5pt] plot coordinates{
(-5,0.090728)(-2.500000e+00,0.079938)(0,0.038164)(2.500000e+00,0.010540)(5,0.009295)(7.500000e+00,0.009221)(10,0.009193)(1.250000e+01,0.009184)(15,0.009198)(1.750000e+01,0.009187)(20,0.009195)(2.250000e+01,0.009193)(25,0.009207)(2.750000e+01,0.009229)(30,0.009174)
      };
      \addplot[black,smooth,mark=square,line width=0.5pt] plot coordinates{
(-5,0.038562)(-2.500000e+00,0.014638)(0,0.011960)(2.500000e+00,0.011190)(5,0.011056)(7.500000e+00,0.011214)(10,0.013264)(1.250000e+01,0.029028)(15,0.018798)(1.750000e+01,0.009263)(20,0.009199)(2.250000e+01,0.009203)(25,0.009206)(2.750000e+01,0.009227)(30,0.009173)	      
      };
      \legend{ {Robust G-MUSIC}, {Emp. robust G-MUSIC}, {G-MUSIC},{Emp. G-MUSIC},{Robust MUSIC},{MUSIC} }
    \end{semilogyaxis}
  \end{tikzpicture}
  \caption{Means square error performance of the estimation of $\theta_1=10^\circ$, with $N=20$, $n=100$, two sources at $10^\circ$ and $12^\circ$, sample outlier scenario $\tau_i=1$, $i<n$, $\tau_n=100$, $u(x)=(1+\alpha)/(\alpha+x)$ with $\alpha=0.2$, $p_1=p_2$.}
  \label{fig:perf_Spike}
\end{figure}

\section{Concluding Remarks}
\label{sec:conclusion}
Robust estimators of scatter were originally designed to provide improved covariance (or scatter) matrix estimates of non-Gaussian zero mean random vectors, consistent in the regime $n\to\infty$, which are particularly suited to elliptical samples \citep{MAR76,TYL87} or to accommodate for outliers \citep{HUB64}. Similar to the more classical sample covariance matrix, the large $n$ consistency however falls short when the population size $N$ is large as well. Random matrix methods allows one to restore consistency in this regime by providing alternative estimation methods of spectral properties of the population covariance or scatter matrices. This is the result of a two-step method: (i) the analysis of the limiting spectrum of the covariance estimators (\citep{MAR67} for sample covariance matrices and \citep{COU13b} for robust estimates of scatter) and (ii) the introduction of improved statistical inference methods. For sample covariance matrices, Point (ii) is the result of the works of Girko \citep{GIR87} and more recently Mestre \citep{MES08b}. The present article provides a first instance of Point (ii) for robust estimators of scatter. The need here for a restriction to a spiked model (while \citep{GIR87,MES08b} treat more generic models) is intimately related to the structure of the approximation $\hat{S}_N$ of $\hat{C}_N$ which heavily depends on a non-observable variable $\gamma$ which may in general be itself an involved function of the parameters to be estimated.

The interest of robust methods is to harness the effect of rare sample outliers, the concatenation of which can be seen as a small rank perturbation matrix of the data sample matrix. A non obvious outcome of the present study is that, while sample covariance matrices equally treat small rank sample and population perturbations by creating non distinguishable spikes in the spectrum, robust estimates of scatter isolate sample versus population perturbations. This makes it possible to specifically estimate information carried by population perturbations, which is one important consequence of Theorem~\ref{th:2}. The practical purpose of this discriminative advantage is obvious and was exemplified by the introduction in Corollary~\ref{co:RGMUSIC} of an improved angle of arrival estimation method which is resilient to sample outliers.

However, since robust estimators of scatter are non unique (Maronna's estimators are defined through $u$ and other estimators such as Tyler's exist), this naturally raises the question of an optimal estimator choice. These questions demand more advanced studies on second order statistics for given performance metrics. Initial investigations are optimistic as they suggest that, on top of $\Vert \hat{C}_N-\hat{S}_N\Vert\asto 0$, differences of linear spectrum functionals of the type $\int f d\mu_{\hat{C}_N}-\int f d\mu_{\hat{S}_N}$, with $\mu_X$ the empirical spectral distribution of $X$ and $f$ a continuous and bounded function, have much weaker fluctuations than each integral around its mean; this indicates that fluctuations of functionals of $\hat{C}_N$ can be studied equivalently through the much more tractable fluctuations of functionals of $\hat{S}_N$.

\section{Proof of the main results}
\label{sec:proof}
\subsection{Notations}

Throughout the proof, we shall use the following shortcut notations:
\begin{align*}
	T &= \diag(\{\tau_i\}_{i=1}^n) \in \CC^{n\times n}\\
	V &= \diag(\{v_c(\tau_i\gamma\}_{i=1}^n) \in \CC^{n\times n}\\
	S &= \left[\{s_{ij}\}_{1\leq i\leq L,1\leq j\leq n}\right] \in \CC^{L\times n} \\
	W &= [w_1,\ldots,w_n] \in\CC^{N\times n} \\
	\tilde{W} &= [\tilde{w}_1,\ldots,\tilde{w}_n] \in\CC^{N\times n}
\end{align*}
with $\tilde{w}_i=w_ir_i/\sqrt{N}$ as in the statement of Theorem~\ref{th:1}. We shall expand $A$ as the singular value decomposition $A=U\Omega \bar{U}^*$ with $U\in\CC^{N\times L}$ isometric, $\Omega=\diag(\sigma_1,\ldots,\sigma_L)$, $\sigma_1\geq \ldots\geq \lambda_L\geq 0$, and $\bar{U}\in\CC^{L\times L}$ unitary. 

We also define
\begin{align*}
	\hat{S}_N^\circ  &= \frac1n\sum_{i=1}^n \tau_i v_c(\tau_i\gamma) \tilde{w}_i\tilde{w}_i^* = \frac1n \tilde{W}TV\tilde{W}^*
\end{align*}
which corresponds to $\hat{S}_N$ with $p_1=\ldots=p_L=0$, i.e., with no perturbation, and 
\begin{align*}
	Q_z^\circ&=(\hat{S}_N^\circ -zI_N)^{-1}=\left(\frac1n\tilde{W}TV\tilde{W}^*-zI_N\right)^{-1}
\end{align*}	
the resolvent of $\hat{S}_N^\circ$. 

For couples $(\eta,M_\eta)$, $\eta<1$, such that $\nu( (0,M_\eta) )>0$ and $\nu( (M_\eta,\infty) )<\eta$, it will be necessary to define $T_\eta$ the matrix $T$ in which all values of $\tau_i$ greater or equal to $M_\eta$ are replaced by zeros, and similarly for $V_\eta$. Denote also $\gamma^\eta$ the unique solution to
	\begin{align}
		\label{eq:gamma_eta}
		1 &= \int_{\tau<M_\eta} \frac{\psi_c(\tau\gamma^\eta)\nu(d\tau)}{1+c\psi_c(\tau\gamma^\eta)}.
	\end{align}
	and $\hat{S}_{N,\eta}$ the resulting $\hat{S}_N$ matrix with all $\tau_i$ greater than $M_\eta$ discarded and $\gamma$ replaced by $\gamma^\eta$. 

	Finally, we further define $T_{(j)}=\diag(\{\tau_i\}_{i\neq j})$ and similarly for $V_{(j)}$, $S_{(j)}$, $\tilde{W}_{(j)}$, $\hat{S}_{(j)}=\hat{S}_{N,(j)}$  the matrices with column or component $j$ discarded, as well as $T_{(j),\eta}$ the matrix $T_\eta$ with row-and-column $j$ discarded, and similarly $V_{(j),\eta}$, $S_{(j),\eta}$, $\tilde{W}_{(j),\eta}$, $\hat{S}_{(j),\eta}$ the corresponding matrices with column or component $j$ discarded.

\subsection{Overall proof strategy}

The existence and uniqueness of $\hat{C}_N$ as defined in the statement of Theorem~\ref{th:1} follows immediately from the recent work \citep{CHI14} (which is more general than the previous result \cite[Theorem~1]{COU13b}). One of the key elements of the proof of convergence in Theorem~\ref{th:1} is to ensure that there exists $\varepsilon>0$ such that, for all large $n$ a.s., all eigenvalues of $\{\hat{S}_{(j)},1\leq j\leq n\}$ (and also of $\{\hat{S}_{(j),\eta},1\leq j\leq n\}$ for given $\eta$ small) are greater than $\varepsilon$. This is an important condition to ensure that the quadratic forms $\frac1N\tilde{w}_j^*\hat{S}_{(j)}^{-1}\tilde{w}_j$, which play a central role in the proof, are jointly controllable. In \citep{COU13b}, where the convergence $\Vert \hat{C}_N-\hat{S}_N\Vert\asto 0$ is obtained for $p_1=\ldots=p_L=0$, this unfolded readily from \cite[Lemma~2]{COU13} (i.e., \cite[Lemma~2]{COU13} states that the matrices $\frac1n\tilde{W}_{(j)}\tilde{W}_{(j)}^*$ have their smallest eigenvalue uniformly away from zero). Here, due to the existence of a small rank matrix $A$, the approach from \cite[Lemma~2]{COU13} no longer holds as $\hat{S}_{(j)}$ may a priori exhibit finitely many isolated eigenvalues getting close to zero as $n\to\infty$. We shall show that this is not possible. Precisely, we shall prove that the large $n$ spectrum of $\hat{S}_N$ is similar to that of $\hat{S}_N^\circ$ but possibly for finitely many isolated eigenvalues, none of which can be asymptotically found close to zero. We shall however characterize those eigenvalues of $\hat{S}_N$ found beyond the right-edge of the limiting spectrum of $\hat{S}_N^\circ$. Once this result is obtained, to complete the proof of Theorem~\ref{th:1}, it will then suffice to check that most spectral statistics involved in the proof of \cite[Theorem~2]{COU13b} are not affected by the presence of the additional small rank matrix $AS$ in the model. Since most results need be proved jointly for the matrix sets $\{\hat{S}_{(j)},1\leq j\leq n\}$ (or $\{\hat{S}_{(j),\eta},1\leq j\leq n\}$), high order moment bounds will be required to then apply union bound along with Markov inequality techniques. As the proof in \citep{COU13b} is rather long and technical and since the main contribution of the present article lies in Theorem~\ref{th:2}, we only discuss in what follows the main new technical elements that differ from \citep{COU13b}.

When Theorem~\ref{th:1} is obtained, the proofs of Theorems~\ref{th:2} and \ref{th:3} unfolds from classical techniques for spiked random matrix models, using the approximation $\hat{S}_N$ for $\hat{C}_N$. The model $\hat{S}_N$ considered here is closely related to the scenario of \citep{CHA12}, but for the random non-Gaussian structure of the matrix $S$; also, \citep{CHA12} imposes $\max_i{\rm dist}(\tau_i,{\rm Supp}(\nu))\to 0$ which we do not enforce here.

\subsection{Localization of the eigenvalues of $\hat{S}_N$ and $\hat{S}_{(i)}$}
\label{sec:localization}

We first study the localization of the eigenvalues of $\hat{S}_N$ and $\{\hat{S}_{(j),\eta},1\leq j\leq n\}$. The strategy being the same, we concentrate mostly on the study of $\hat{S}_N$ and then briefly generalize the approach to $\{\hat{S}_{(j),\eta},1\leq j\leq n\}$.

By isolating the small rank perturbation terms, we first develop $\hat{S}_N$ as
\begin{align*}
	\hat{S}_N &= \frac1n\sum_{i=1}^n v_c(\tau_i\gamma) A_i\bar{w}_i\bar{w}_i^*A_i^* \\
	&= \hat{S}_N^\circ + \frac1n ASVS^*A^* + \frac1n AST^{\frac12}V\tilde{W}^* + \frac1n \tilde{W}T^{\frac12}VS^*A^*.
\end{align*}

Let $\lambda\in \RR\setminus [\varepsilon,S^++\varepsilon]$ for some $\varepsilon>0$ small be an eigenvalue of $\hat{S}_N$. Note that such a $\lambda$ may not exist. However, from \citep{SIL98} and since in particular $\limsup_n \Vert AA^*\Vert <\infty$ and $\limsup_n \Vert T^{\frac12}V\Vert <\infty$, the spectral norm of each matrix above is asymptotically bounded almost surely and thus $\limsup_n \lambda<\infty$ a.s. Also, from \citep{COU13b} and from the discussion prior to the statement of Theorem~\ref{th:1}, for all large $n$ a.s., $\lambda$ is not an eigenvalue of $\hat{S}_N^\circ$ (for $\varepsilon$ chosen small enough). Thus, by definition, $\lambda$ is a solution of $\det(\hat{S}_N-\lambda I_N)=0$ while $\Vert (\hat{S}_N^\circ-\lambda I_N)^{-1}\Vert < M$ for some $M>0$ independent of $n$ but increasing as $\varepsilon\to 0$. As such, from the development above, for all large $n$ a.s.,
\begin{align*}
	0 &= \det\left( \hat{S}_N^\circ - \lambda I_N + \Gamma \right) = \det \left( Q_\lambda^\circ \right)^{-1} \det \left( I_N + (Q_\lambda^\circ)^\frac12 \Gamma (Q_\lambda^\circ)^\frac12 \right)
\end{align*}
where $\Gamma=\frac1n ASVS^*A^* + \frac1n AST^{\frac12}V\tilde{W}^* + \frac1n \tilde{W}T^{\frac12}VS^*A^*$ can be further written
\begin{align}
	\label{eq:Gamma}
	\Gamma &= \begin{bmatrix} U\Omega^{\frac12} & \frac1n\tilde{W}T^{\frac12}VS^*\bar{U}\Omega^{\frac12} \end{bmatrix}\begin{bmatrix} \Omega^{\frac12}\bar{U}^*\frac1n\tilde{W}V\tilde{W}^*\bar{U}\Omega^{\frac12} & I_L \\ I_L & 0 \end{bmatrix} \begin{bmatrix} \Omega^{\frac12}U^* \\ \Omega^{\frac12}\frac1n\bar{U}^*ST^{\frac12}V\tilde{W} \end{bmatrix}.
\end{align}
Exploiting the small rank of $S$ and $A$, and the formula $\det(I+AB)=\det(I+BA)$ for properly sized $A,B$ matrices, this induces
\begin{align*}
	0 &=\det \left( I_{2L} + \Gamma_L(\lambda) \right)
\end{align*}
where
\begin{align*}
	\Gamma_L(\lambda) &\triangleq \begin{bmatrix} \Omega^{\frac12}\bar{U}^*\frac1n\tilde{W}V\tilde{W}^*\bar{U}\Omega^{\frac12} & I_L \\ I_L & 0  \end{bmatrix} \begin{bmatrix} \Omega^{\frac12}U^* \\ \Omega^{\frac12}\frac1n\bar{U}^*ST^{\frac12}V\tilde{W} \end{bmatrix} Q_\lambda^\circ \begin{bmatrix} U\Omega^{\frac12} & \frac1n\tilde{W}T^{\frac12}VS^*\bar{U}\Omega^{\frac12} \end{bmatrix}.
\end{align*}

We now need the following central lemmas.
	\begin{lemma}
		\label{le:1}
		Let $\varepsilon>0$ and $\mathcal A_\varepsilon$ be the event $\varepsilon<\lambda_N(\hat{S}_N^\circ)<\lambda_1(\hat{S}_N^\circ)< S^++\varepsilon$. Let also $a,b\in\CC^N$ be two vectors of unit norm. Then, for every $z\in\mathcal C \subset \CC\setminus [\varepsilon,S^++\varepsilon]$ with $\mathcal C$ compact,
		\begin{align*}
			\EE\left[ \left| \frac1nS^*VS - \frac1n\tr V \right|^p \right] &\leq KN^{-\frac{p}2} \\
			\EE\left[1_{\mathcal A_\varepsilon} \left| \frac1n ST^{\frac12}V\frac1n\tilde{W}^*Q_z^\circ \tilde{W} VT^{\frac12}S^* - \left[\frac1n\tr V + z \frac1n\tr V \tilde{Q}_z^\circ \right]\right|^p \right] &\leq KN^{-\frac{p}2} \\
			\EE\left[1_{\mathcal A_\varepsilon} \left|  a^*Q_z^\circ b - a^*b\frac1N\tr Q_z^\circ \right|^p \right] &\leq KN^{-\frac{p}2} \\
			\EE\left[1_{\mathcal A_\varepsilon} \left\Vert \frac1n a^*Q_z^\circ \tilde{W}T^{\frac12}V S^*\right\Vert^p \right] &\leq KN^{-\frac{p}2}
		\end{align*}
		where $\tilde{Q}_z^\circ=(\frac1nT^{\frac12}V^{\frac12}\tilde{W}^*\tilde{W}V^{\frac12}T^{\frac12}-zI_N)^{-1}$ and $K>0$ does not depend on $z$.
	\end{lemma}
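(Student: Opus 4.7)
On the event $\mathcal A_\varepsilon$ and for $z\in\mathcal C\subset\CC\setminus[\varepsilon,S^++\varepsilon]$ compact, both $\|Q_z^\circ\|$ and $\|\tilde Q_z^\circ\|$ are bounded by $M:=\mathrm{dist}(\mathcal C,[\varepsilon,S^++\varepsilon])^{-1}$, uniformly in $z$ and $N$ (the two resolvents share the same non-zero spectrum). Moreover $\tau v_c(\tau\gamma)=\gamma^{-1}\psi_c(\tau\gamma)\le\gamma^{-1}\psi_{c,\infty}$, so $T$ and $V$ have uniformly bounded entries, while the identity $\tilde w_i=w_i r_i/\sqrt N$ realises $\tilde W$ as a matrix with i.i.d.\ $\mathcal{CN}(0,N^{-1}I_N)$ columns, independent of $S$ and $T$. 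These facts will be used throughout without further mention.

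\textbf{Bounds (i), (ii), (iv): conditioning on $(\tilde W,T)$.} Under this conditioning, each of the three expressions is a quadratic (or, for (iv), bilinear) form in the i.i.d.\ entries of $S$ against a deterministic coefficient matrix of operator norm $O(1)$ on $\mathcal A_\varepsilon$. The classical moment inequality for such forms in i.i.d.\ entries with uniformly bounded moments (cf.\ the arguments of \citep{COU13b}) then furnishes the $N^{-p/2}$ decay after the $1/n$ normalisation. Only the centering constant in (ii) requires a computation: setting $M_n:=\tilde W V^{1/2}T^{1/2}/\sqrt n$ (so that $M_nM_n^*=\hat S_N^\circ$), the push-through identity $M_n^*(M_nM_n^*-zI)^{-1}=(M_n^*M_n-zI)^{-1}M_n^*$ combined with $M_n^*M_n\tilde Q_z^\circ=I_n+z\tilde Q_z^\circ$ yields
\begin{align*}
\tfrac1n T^{1/2}V\tilde W^*Q_z^\circ\tilde W V T^{1/2}=V+zV^{1/2}\tilde Q_z^\circ V^{1/2},
\end{align*}
whose trace is exactly $\tr V+z\tr(V\tilde Q_z^\circ)$. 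For (iv), expanding the squared norm as $\frac1{n^2}\sum_{\ell=1}^L|a^*Q_z^\circ\tilde WT^{1/2}Vs_\ell^*|^2$ (with $s_\ell$ the rows of $S$) reduces it to a sum of $L$ quadratic forms in i.i.d.\ vectors with conditional expectations of order $N^{-1}$, to which the same moment inequality applies.

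\textbf{Bound (iii) and main obstacle.} Since the law of $\tilde W$ is invariant under left multiplication by any unitary matrix, conjugating $Q_z^\circ$ by any unitary preserves its conditional distribution given $T$; hence $\EE[Q_z^\circ\mid T]=c(z,T)I_N$ with $c(z,T)=\tfrac1N\EE[\tr Q_z^\circ\mid T]$, and
\begin{align*}
a^*Q_z^\circ b-a^*b\,\tfrac1N\tr Q_z^\circ=a^*(Q_z^\circ-\EE[Q_z^\circ\mid T])b-a^*b\,\tfrac1N\tr(Q_z^\circ-\EE[Q_z^\circ\mid T]).
\end{align*}
Each term is controlled by a martingale--Sherman--Morrison argument along the columns of $\tilde W$: writing $\alpha_j=\tau_jv_c(\tau_j\gamma)/n$,
\begin{align*}
a^*Q_z^\circ b-a^*Q_{z,(j)}^\circ b=-\frac{\alpha_j\,a^*Q_{z,(j)}^\circ\tilde w_j\tilde w_j^*Q_{z,(j)}^\circ b}{1+\alpha_j\,\tilde w_j^*Q_{z,(j)}^\circ\tilde w_j}
\end{align*}
expresses each increment $(\EE_j-\EE_{j-1})a^*Q_z^\circ b$ as a rational function of the Gaussian vector $\tilde w_j$ against an $\mathcal F_{j-1}$-measurable resolvent; Gaussian quadratic-form concentration yields $\EE_{j-1}[|\cdot|^{2p}]=O(N^{-p})$, and Burkholder's inequality sums $n$ such increments to $O(N^{-p/2})$. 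The main technical obstacle is that $\|Q_{z,(j)}^\circ\|$ is not automatically bounded on $\mathcal A_\varepsilon$: interlacing only guarantees $\lambda_N(\hat S_{(j)}^\circ)\le\lambda_N(\hat S_N^\circ)$, so a rank-one removal can push an eigenvalue into $[0,\varepsilon]$. One resolves this by restricting to the slightly smaller event $\mathcal A_{\varepsilon/2}\cap\{\max_j\alpha_j\|\tilde w_j\|^2\le C\log n\}$, whose complement has probability $o(N^{-q})$ for every $q>0$ and on which the Sherman--Morrison denominator $\det(\hat S_N^\circ-zI)/\det(\hat S_{(j)}^\circ-zI)$ is bounded away from zero uniformly in $j$ and $z$, the negligible overflow being absorbed into the constant $K$.
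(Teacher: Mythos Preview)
Your treatment of bounds (i) and (ii) matches the paper's: both use the trace/quadratic-form lemma applied to the i.i.d.\ entries of $S$, and your push-through computation for the centering in (ii) is exactly the identity the paper writes as $\frac1n SVS^* + z\frac1n SV^{\frac12}\tilde Q_z^\circ V^{\frac12}S^*$.

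For bound (iv) you take a genuinely different route. The paper conditions on $S$ and exploits the left-unitary invariance of $\tilde W$, invoking the integration-by-parts and Poincar\'e--Nash machinery for Haar matrices from \cite[Chapter~8]{PAS11}. You instead condition on $(\tilde W,T)$ and reduce to linear forms in the rows of $S$, with coefficient vector of squared norm $\frac1{n^2}a^*Q_z^\circ\tilde WT^{\frac12}V^2T^{\frac12}\tilde W^*(Q_z^\circ)^*a\le \frac{u(0)}{n}a^*Q_z^\circ\hat S_N^\circ(Q_z^\circ)^*a=O(N^{-1})$ on $\mathcal A_\varepsilon$. This is more elementary than the paper's argument and works because $T^{\frac12}V^2T^{\frac12}\preceq u(0)\,TV$; it is a nice simplification.

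For bound (iii), however, there is a real gap. The paper does \emph{not} use a martingale/leave-one-out argument: it invokes \cite[Lemma~3]{LOU10b}, whose proof relies on Gaussian concentration (Poincar\'e--Nash type bounds), so that only derivatives of $a^*Q_z^\circ b$ with respect to the entries of $\tilde W$ appear, and these involve $Q_z^\circ$ alone, which is uniformly bounded on $\mathcal A_\varepsilon$. Your martingale route forces you to deal with $Q_{z,(j)}^\circ$, and your proposed fix does not close the hole: since $\alpha_j\|\tilde w_j\|^2=\frac{\tau_jv_c(\tau_j\gamma)}{n}\|\tilde w_j\|^2$ is of order $c_n\psi_{c,\infty}/\gamma=O(1)$ (not $o(1)$), the extra event $\{\max_j\alpha_j\|\tilde w_j\|^2\le C\log n\}$ is essentially vacuous and does not prevent $\lambda_N(\hat S_{(j)}^\circ)$ from falling into $[0,\varepsilon)$. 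Consequently, for real $z\in(0,\varepsilon)\cap\mathcal C$ you have no control on $\|Q_{z,(j)}^\circ\|$, and neither the Sherman--Morrison denominator nor the numerator is shown to be bounded. A second issue you do not address is that $1_{\mathcal A_\varepsilon}$ depends on the full matrix $\tilde W$ and is therefore not adapted to your martingale filtration, so Burkholder cannot be applied to $1_{\mathcal A_\varepsilon}\sum_j(\EE_j-\EE_{j-1})[\cdot]$ as written. The paper's Gaussian route sidesteps both difficulties entirely; if you wish to keep the martingale approach you would need substantially more work (e.g., a quantitative bound on $P(\min_j\lambda_N(\hat S_{(j)}^\circ)<\varepsilon')$ and a careful treatment of the indicator), which is not supplied.
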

	\begin{proof}
		The first convergence is a mere application of \cite[Lemma~B.26]{SIL06}. Similarly, noticing that 
		\begin{align*}
			\frac1n ST^{\frac12}V\frac1n\tilde{W}^*Q_z^\circ \tilde{W} VT^{\frac12}S^* &= \frac1n SV^{\frac12} \left[T^{\frac12}V^{\frac12}\frac1n\tilde{W}^*\tilde{W} V^{\frac12}T^{\frac12}\tilde{Q}_z^\circ\right] V^{\frac12} S^* \\
			&= \frac1n SVS^* + z \frac1n S\tilde{Q}_z^\circ V^{\frac12} S^* 
		\end{align*}
		the second result follows again by \cite[Lemma~B.26]{SIL06} and the fact that $\limsup_n \Vert\tilde{Q}_z^\circ\Vert<1/{\rm dist}(\mathcal C,[\varepsilon,S^++\varepsilon])$. Using the fact that $\tilde{W}$ is Gaussian, the third result follows from the same proof as in \cite[Lemma~3]{LOU10b} using additionally $[VT]_{ii}<\psi_\infty$. Similarly, conditioning first on $S$, which is independent of $\tilde{W}$, we obtain by the same proof as in \cite[Lemma~4]{LOU10b} that
		\begin{align*}
			\EE_{\tilde{W}}\left[1_{\mathcal A_\varepsilon} \left| \frac1n a^*Q_z^\circ \tilde{W}T^{\frac12}V s_i\right|^p \right] &\leq K \Vert n^{-\frac12}s_i\Vert^p N^{-\frac{p}2}
		\end{align*}
		where we denoted $S^*=[s_1,\ldots,s_L]$ (the proof follows from exploiting the left-unitary invariance of $\tilde{W}$ and applying the integration by parts and Poincar\'e--Nash inequality method for unitary Haar matrices described in \cite[Chapter~8]{PAS11}). Now, $\EE[\Vert n^{-\frac12}s_i\Vert^p]=O(1)$ by H\"older's inequality, and we obtain the last inequality. 
	\end{proof}

	\begin{lemma}
		\label{le:2}
		For $z\in \CC\setminus [S_\mu^-,S_\mu^+]$, let $\delta(z)$ be the unique solution to the equation
		\begin{align*}
			\delta(z) &= c\left( -z + \int \frac{t v_c(t \gamma)}{1+\delta(z)t v_c(t \gamma)} d\nu(t) \right)^{-1}
		\end{align*}
		where we recall that $\gamma$ is the unique positive solution to
		\begin{align*}
			1 &= \int \frac{\psi_c(t \gamma)}{1+c\psi_c(t\gamma)}d\nu(t).
		\end{align*}
		Let now $z\in \mathcal C$, with $\mathcal C$ a compact set of $\CC\setminus [\varepsilon,S_\mu^++\varepsilon]$ for some $\varepsilon>0$ small enough. Then, denoting $\Psi_z^\circ=(I_n + \delta(z)VT)^{-1}$,
		\begin{align*}
			\sup_{z\in\mathcal C}\left| \frac1N\tr Q_z^\circ - \frac{\delta(z)}c \right| &\asto 0 \\
			\sup_{z\in\mathcal C}\left| \frac1n\tr V+z\frac1n\tr V \tilde{Q}_z^\circ - \delta(z)\frac1n\tr V^2T\Psi_z^\circ\right| &\asto 0.
		\end{align*}
	\end{lemma}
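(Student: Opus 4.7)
The matrix $\hat S_N^\circ = \tfrac{1}{n}\tilde W (TV)\tilde W^*$ is a classical sample covariance with random diagonal ``population'' $D=TV$. The crucial observation is that, although $\nu$ may have unbounded support, the map $t\mapsto tv_c(t\gamma)=\psi_c(t\gamma)/\gamma$ is bounded by $\psi_{c,\infty}/\gamma$, so the diagonal entries of $D$ lie in the bounded interval $[0,\psi_{c,\infty}/\gamma]$. Combined with $\nu_n\to\nu$ a.s.\ and continuity of $t\mapsto tv_c(t\gamma)$, this yields a.s.\ weak convergence of the ESD of $D$ to the push-forward of $\nu$ under that map, a \emph{compactly} supported measure. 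Since $\tilde W$ is independent of $T$, we may condition on $T$ and reduce the problem to a Silverstein/Marchenko--Pastur setup with a bounded (conditionally deterministic) population.

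\textbf{First convergence.} Introduce the random finite-$n$ fixed point
\[
\underline{\delta}_n(z) \;=\; c_n\Bigl(-z + \tfrac{1}{n}\sum_{i=1}^n \frac{\tau_i v_c(\tau_i\gamma)}{1+\underline{\delta}_n(z)\,\tau_i v_c(\tau_i\gamma)}\Bigr)^{-1},
\]
whose existence, analyticity and stability on a neighbourhood of $\mathcal C$ follow from uniform boundedness of the integrand. The standard Silverstein--Bai resolvent computation -- apply the Sherman--Morrison identity $Q_z^\circ\tilde w_i=Q_{z,(i)}^\circ\tilde w_i/(1+\tfrac{1}{n}\tau_iv_c(\tau_i\gamma)\tilde w_i^*Q_{z,(i)}^\circ\tilde w_i)$ on each summand, take a trace, invoke the quadratic-form concentration inequalities of Lemma~\ref{le:1} (third bound with $a=b=e_i$), and use rank-one perturbation to replace $Q_{z,(i)}^\circ$ by $Q_z^\circ$ -- gives $\tfrac{1}{N}\tr Q_z^\circ-\underline{\delta}_n(z)/c_n\asto 0$ pointwise for $z\in\mathcal C\cap\CC^+$. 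Passing $\nu_n\to\nu$ through the bounded continuous functional defining $\underline{\delta}_n$ and using uniqueness of $\delta$ on the correct resolvent branch gives $\underline{\delta}_n(z)\to\delta(z)$ a.s. Both sides being uniformly bounded and analytic in $z$ on a neighbourhood of $\mathcal C$ (which is at positive distance from $[S_\mu^-,S_\mu^+]$), Vitali's theorem upgrades pointwise a.s.\ convergence to uniform a.s.\ convergence on $\mathcal C$.

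\textbf{Second convergence via an algebraic reduction.} A parallel argument applied to the $n\times n$ companion $\tfrac{1}{n}M\tilde W^*\tilde W M$, with $M=T^{1/2}V^{1/2}$, yields the standard deterministic equivalent
\[
\tfrac{1}{n}\tr B_n\bigl(\tilde Q_z^\circ + \tfrac{1}{z}\Psi_z^\circ\bigr) \;\asto\; 0
\]
uniformly on $\mathcal C$, for every matrix sequence $B_n$ of bounded operator norm independent of $\tilde W$; the limit is identified through the same fixed-point equation defining $\delta$. Taking $B_n=V$ and using the elementary diagonal identity $I_n-\Psi_z^\circ=\delta(z)\,VT\Psi_z^\circ$, one obtains
\[
\tfrac{1}{n}\tr V + z\tfrac{1}{n}\tr V\tilde Q_z^\circ \;\approx\; \tfrac{1}{n}\tr V(I_n-\Psi_z^\circ) \;=\; \delta(z)\,\tfrac{1}{n}\tr V^2 T\Psi_z^\circ,
\]
which is exactly the second claim.

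\textbf{Main obstacle.} The principal difficulty is almost-sure \emph{uniform} convergence in $z$ without an a priori upper bound on the $\tau_i$: a single $\tau_i$ may be arbitrarily large, so the operator norm $\Vert\hat S_N^\circ\Vert$ is not bounded by crude estimates. The remedy, already prepared in the notations section through $T_\eta$, $V_\eta$, $\gamma^\eta$ and $\hat S_{N,\eta}^\circ$, is a truncation argument: establish the claims first for the truncated model where entries are deterministically bounded by $M_\eta$, then control the small-rank discrepancy uniformly in $n$ by exploiting Assumption~\ref{ass:u}(iv), and finally let $\eta\downarrow 0$. A secondary technical point is ensuring that $\underline{\delta}_n(z)$ stays on the correct analytic branch uniformly in $n$ and $z\in\mathcal C$; this follows from the uniform boundedness of the supports of the measures $\nu_{D,n}$ established above.
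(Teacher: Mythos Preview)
Your core approach---pointwise convergence via the Silverstein--Bai argument followed by Vitali's theorem for uniformity---matches the paper's, which simply cites \citep{SIL95} for the pointwise step and then invokes analyticity of $Q_z^\circ$, $\tilde Q_z^\circ$, $\delta$ on $\mathcal C$ (from \citep{COU13b}) together with Vitali on a countable sequence with an accumulation point. Your algebraic reduction for the second convergence via $I_n-\Psi_z^\circ=\delta(z)VT\Psi_z^\circ$ is also exactly the identity implicitly used downstream.

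The one place where you go astray is the ``Main obstacle'' paragraph, and it contradicts what you correctly observed at the outset. You already noted that $t\mapsto tv_c(t\gamma)$ is bounded by $\psi_{c,\infty}/\gamma$, so the diagonal of $D=TV$ is uniformly bounded regardless of how large individual $\tau_i$ may be. Consequently $\hat S_N^\circ=\tfrac1n\tilde W D\tilde W^*\preceq \tfrac{\psi_{c,\infty}}{\gamma}\tfrac1n\tilde W\tilde W^*$, and by \citep{SIL98} the operator norm is a.s.\ bounded; together with the lower-eigenvalue control from \citep{COU13b} this gives analyticity of the resolvents on $\mathcal C$ for all large $n$ a.s. So the claim that ``$\Vert\hat S_N^\circ\Vert$ is not bounded by crude estimates'' is false, and the truncation machinery $T_\eta,V_\eta,\gamma^\eta$ is not needed for this lemma at all---in the paper it is invoked only later, in the proof of Theorem~\ref{th:1} (Lemmas~\ref{le:remark2COU13b}--\ref{le:lemma3COU13b}), where unbounded $\tau_i$ enter through $\hat C_N$ itself rather than through $\hat S_N^\circ$. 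Dropping that paragraph leaves a correct (if more verbose than necessary) proof.
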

	\begin{proof}
		The almost sure convergences to zero of the terms inside the norms (i.e., for each $z\in\mathcal C$) are classical, see e.g., \citep{SIL95}. Considering a countable sequence $z_1,z_2,\ldots$ of such $z\in\mathcal C$ having an accumulation point, by the union bound, there exists a probability one set on which the convergence is valid for each point of the sequence. Now, by \citep{COU13b}, for all large $n$ a.s., $Q_z^\circ$ and $\tilde Q_z^\circ$ are analytic on $\mathcal C$. Since $\delta(z)$ is also analytic on $\mathcal C$, by Vitali's convergence theorem \citep{TIT39}, the convergences are uniform on $\mathcal C$.
	\end{proof}

	From \citep{COU13b} again, for $\varepsilon>0$ small enough, the set $\mathcal A_\varepsilon$ introduced in Lemma~\ref{le:1} satisfies $1_{\mathcal A_\varepsilon}\asto 1$. As such, using the Markov inequality and the Borel Cantelli lemma, Lemma~\ref{le:1} for $p>2$ ensures that all quantities in absolute values in the statement of Lemma~\ref{le:1} converge to zero almost surely as $n\to\infty$. Since the quantities involved are analytic on compact $\mathcal C\subset \CC \setminus [\varepsilon,S^++\varepsilon]$, considering a countable sequence of $z\in\mathcal C$ having a limit point, it is clear by Vitali's convergence theorem \citep{TIT39} that these convergences are uniform on $\mathcal C$. Applying successively Lemma~\ref{le:1} for $p>2$ and Lemma~\ref{le:2}, we then obtain, for $\mathcal C\subset \CC\setminus [\varepsilon,S^++\varepsilon]$,
	\begin{align*}
		\sup_{z\in\mathcal C}	\left\{ \left\Vert \Gamma_L(z) - \begin{bmatrix} \Omega \frac1n\tr V & I_L \\ I_L & 0 \end{bmatrix} \begin{bmatrix} \Omega \frac{\delta(z)}c & 0 \\ 0 & \Omega \delta(z) \frac1n\tr V^2T\Psi_z^\circ \end{bmatrix} \right\Vert \right\} \asto 0
	\end{align*}
	or equivalently
	\begin{align}
		\label{eq:cvg_GammaL}
		\sup_{z\in\mathcal C} \left\{\left\Vert \Gamma_L(z) - \begin{bmatrix} \Omega^2 \frac{\delta(z)}c \frac1n\tr V & \Omega \delta(z) \frac1n\tr V^2T\Psi_z^\circ \\ \Omega \frac{\delta(z)}c & 0 \end{bmatrix} \right\Vert\right\} \asto 0.
	\end{align}
	We may then particularize this result to $z=\lambda$ which, for $\varepsilon$ sufficiently small, remains bounded away from $[\varepsilon,S^++\varepsilon]$ as $n$ grows (but of course depends on $n$) to obtain
	\begin{align}
		\label{eq:cvg_GammaL_lambda}
		\left\Vert \Gamma_L(\lambda) - \begin{bmatrix} \Omega^2 \frac{\delta(\lambda)}c \frac1n\tr V & \Omega \delta(\lambda) \frac1n\tr V^2T\Psi_\lambda^\circ \\ \Omega \frac{\delta(\lambda)}c & 0 \end{bmatrix} \right\Vert \asto 0.
	\end{align}

	For $\bar{\lambda}\in \RR \setminus [\varepsilon,S^++\varepsilon]$, let us now study the equation
	\begin{align}
		\label{eq:detlimit}
		\det \left( I_{2L} + \begin{bmatrix} \Omega^2 \frac{\delta(\bar{\lambda})}c \frac1n\tr V & \Omega \delta(\bar{\lambda}) \frac1n\tr V^2T\Psi_{\bar{\lambda}}^\circ \\ \Omega \frac{\delta(\bar{\lambda})}c & 0 \end{bmatrix} \right) = 0.
	\end{align}
	After development of the determinant, this equation is equivalent to 
	\begin{align*}
		\sigma^2_{\ell}\frac{\delta(\bar{\lambda})}c\left( \frac1n\tr V - \delta(\bar{\lambda}) \frac1n\tr V^2T\Psi_{\bar{\lambda}}^\circ \right) + 1 &= 0
	\end{align*}
	for some $\ell\in\{1,\ldots,L\}$, or equivalently, using $V-\delta(\bar{\lambda})V^2T\Psi_{\bar\lambda}^\circ=V\Psi_{\bar\lambda}^\circ$
	\begin{align*}
		\sigma^2_{\ell} \delta(\bar{\lambda}) \frac1N\sum_{i=1}^n \frac{v_c(\tau_i\gamma)}{1+\tau_i v_c(\tau_i\gamma)\delta(\bar{\lambda})} &= 0.
	\end{align*}
	In the limit $n\to\infty$, using $A^*A\asto \diag(p_1,\ldots,p_L)$ and $\frac1n\sum_{i=1}^n{\bm\delta}_{\tau_i}\to \nu$ a.s., any accumulation point $\bar{\Lambda}\in (\RR \setminus (\varepsilon,S^++\varepsilon))\cup \{\infty\}$ of $\bar{\lambda}$ must satisfy
	\begin{align}
		\label{eq:eq_p}
		1 + p_{\ell} \frac1c \int \frac{ \delta(\bar{\Lambda}) v_c(\tau\gamma)}{1+\delta(\bar{\Lambda})\tau v_c(\tau\gamma)}\nu(dt) = 0.
	\end{align} 
This unfolds from dominated convergence, using $\delta( (S^+,\infty) )\subset (-(\tau_+v(\tau_+\gamma))^{-1},0)$ with $\tau_+\in(0,\infty]$ the right-edge of the support of $\nu$; in particular, if ${\rm Supp}(\nu)$ is unbounded, $\delta( (S^+,\infty) )\subset (-\gamma/\psi_\infty,0)$ \citep{HAC13}. 
	Let us then consider the equation in the variable $\Lambda\in (S^+,\infty)$
	\begin{align}
		\label{eq:Lambda}
		-\left(\frac1c\int \frac{\delta(\Lambda) v_c(\tau \gamma)}{1+\delta(\Lambda)\tau v_c(\tau\gamma)} \nu(d\tau)\right)^{-1}= p_{\ell}.
	\end{align}

	We know from \citep{COU13b} that, since $\nu([0,m))<1-\phi_\infty^{-1}$ for some $m>0$ (by Assumption~\ref{ass:u}), $S_\mu^->0$. Also, as the Stieltjes transform of a measure with support included in $[S_\mu^-,S_{\mu}^+]\subset [S_\mu^-,S^+]$, $\delta$ is increasing on both $[0,S_\mu^-)$ and $(S^+,\infty)$. Moreover, $\delta( [0,S_\mu^-) )\subset (0,\infty)$ and $\delta( (S^+,\infty) )\subset (-(\tau_+v(\tau_+\gamma))^{-1},0)$. 
	Therefore, the left-hand side of \eqref{eq:Lambda} is negative for $\Lambda\in [0,S_\mu^-)$ and the equation has no solution in this set. It is now easily seen that the left-hand side of \eqref{eq:Lambda} is increasing with $\Lambda$ with limits infinity as $\Lambda\to\infty$ and $p_->0$ as $\Lambda\downarrow S^+$. Therefore, if $p_-<p_{\ell}$, the above equation has a unique solution $\Lambda_{\ell}\in (S^+,\infty)$, distinct for each distinct $p_{\ell}$. Hence, $\bar{\lambda}\to \bar{\Lambda}=\Lambda_{\ell}$. 
	
		By the argument principal, for all $n$ large a.s., the number of eigenvalues of $\hat{S}_N$, i.e., the number of zeros of $\det(I_{2L}+\Gamma_L(\lambda))$, in any open set $\mathcal V\subset \RR\setminus [\varepsilon,S^++\varepsilon]$ is
	\begin{align*}
		\frac1{2\pi\imath} \oint_{\mathcal I} \frac{[\det(I_{2L}+\Gamma_L(z))]'}{\det(I_{2L}+\Gamma_L(z))}dz
	\end{align*}
	with $\mathcal I$ a contour enclosing $\mathcal V$. By the uniform convergence of \eqref{eq:cvg_GammaL} on $\mathcal V$, the analyticity of the quantities involved, and the fact that the involved determinant is a polynomial of order at most $2L$ of its entries, this value asymptotically corresponds to the number of solutions to \eqref{eq:detlimit} in $\mathcal V$ counted with multiplicity, which in the limit are the $\Omega_k\in \mathcal V$. Particularizing $\mathcal V$ to $(-1,2\varepsilon)$ for $\varepsilon>0$ small enough and then to any small open ball around $\Lambda_\ell$ for each $\ell$ such that $p_\ell>p_-$, we then conclude that $\hat{S}_N$ has asymptotically no eigenvalue in $[0,\varepsilon]$ but that $\lambda_{\ell}(\hat{S}_N)\asto \Lambda_{\ell}$ for all $\ell\in\mathcal L$, which is the expected result.

\bigskip

The precise localization of the eigenvalues of $\hat{S}_N$ will be fundamental for the proof of Theorems~\ref{th:2}~and~\ref{th:3}. To prove Theorem~\ref{th:1} though, we need to generalize part of this result to the matrices $\hat{S}_{(j)}$ and $\hat{S}_{(j),\eta}$ defined at the beginning of the section. Precisely, we need to show that there exists $\varepsilon>0$ such that $\min_{1\leq j\leq N}\{\lambda_N(\hat{S}_{(j)})\}>\varepsilon$ for all large $n$ a.s., and similarly for $\hat{S}_{(j),\eta}$.

Take $j\in\{1,\ldots,n\}$. Replacing $\hat{S}_N$ by $\hat{S}_{(j)}$ in the proof above leads to the same conclusions. Indeed, by a rank-one perturbation argument \cite[Lemma~2.6]{SIL95}, for each $\varepsilon>0$, for all large $n$ a.s.
\begin{align*}
	\frac1n\tr \tilde{Q}_z^\circ- \frac1n\tr \left(\frac1n\tilde{W}_{(j)}T_{(j)}V_{(j)}\tilde{W}_{(j)}-zI_N\right)^{-1}\leq \frac1n \frac1{ {\rm dist}(z,[\varepsilon,S^++\varepsilon])}
\end{align*}
and therefore, up to replacing all matrices $X$ by $X_{(j)}$ in their statements, Lemmas~\ref{le:1}~and~\ref{le:2} hold identically (with $\delta(z)$ unchanged). Exploiting $\frac1{n-1}\sum_{i\neq j}{\bm\delta}_{\tau_i}\to \nu$ a.s., the remainder of the proof unfolds all the same and we have in particular that for all large $n$ a.s. $\hat{S}_{(j)}$ has no eigenvalue below some $\varepsilon>0$. 

We now prove that this result can be made uniform across $j$. Denote $\Gamma_{L,(j)}(z)$ the matrix $\Gamma_L(z)$ with all matrices $X$ replaced by $X_{(j)}$. Also rename Lemmas~\ref{le:1}~and~\ref{le:2} respectively Lemma~\ref{le:1}-$(j)$ and Lemma~\ref{le:2}-$(j)$, and rename $\mathcal A_\varepsilon$ by $\mathcal A_{\varepsilon,(j)}$ in the statement of Lemma~\ref{le:1}-$(j)$. Then, taking $p>4$ in Lemma~\ref{le:1}-$(j)$, by the union bound and the Markov inequality, for $e>0$,
\begin{align*}
	&P\left( \max_{1\leq j\leq n} 1_{A_{\varepsilon,(j)}} \left\Vert \Gamma_{L,(j)}(z) - \begin{bmatrix} \Omega^2 \frac{\delta(z)}c \frac1n\tr V & \Omega \delta(z) \frac1n\tr V^2T\Psi_z^\circ \\ \Omega \frac{\delta(z)}c & 0 \end{bmatrix} \right\Vert > e  \right) \nonumber \\
	&\leq  \frac1{e^p}\sum_{j=1}^n \EE\left[ 1_{A_{\varepsilon,(j)}} \left\Vert \Gamma_{L,(j)}(z) - \begin{bmatrix} \Omega^2 \frac{\delta(z)}c \frac1n\tr V & \Omega \delta(z) \frac1n\tr V^2T\Psi_z^\circ \\ \Omega \frac{\delta(z)}c & 0 \end{bmatrix} \right\Vert^p \right] \nonumber \\
	&= O(N^{ 1-\frac{p}2 })
\end{align*}
which is summable. By the Borel Cantelli lemma, the event in the probability parentheses then converges a.s.\@ to zero. Finally, from \citep{COU13}, there exists $\varepsilon>0$ such that $1_{\cap_{j=1}^n A_{\varepsilon,(j)}}\asto 1$. We then conclude that, for each $z\in\mathcal C\subset \CC\setminus [\varepsilon,S_\mu^++\varepsilon]$ for some $\varepsilon>0$,
\begin{align*}
	\sup_{1\leq j\leq n} \left\Vert \Gamma_{L,(j)}(z) - \begin{bmatrix} \Omega^2 \frac{\delta(z)}c \frac1n\tr V & \Omega \delta(z) \frac1n\tr V^2T\Psi_z^\circ \\ \Omega \frac{\delta(z)}c & 0 \end{bmatrix} \right\Vert \asto 0
\end{align*}
Let now $\mathcal V\subset \CC\setminus [\varepsilon,S_\mu^++\varepsilon]$ be a bounded open set containing $[0,\varepsilon/2]$ and $\mathcal I$ be its smooth boundary. Taking the determinant of each matrix inside the norm and using again the analyticity of the functions involved, we now get that the quantity
\begin{align*}
	\frac1{2\pi\imath} \oint_{\mathcal I} \frac{[\det(I_{2L}+\Gamma_{L,(j)}(z))]'}{\det(I_{2L}+\Gamma_{L,(j)}(z))}dz
\end{align*}
converges almost surely uniformly across $j\in\{1,\ldots,n\}$ to the number of eigenvalues of any of the $\hat{S}_{(j)}$ within $[0,\varepsilon/2]$. But by the previous proof, this must be zero. Hence, for all large $n$ a.s., none of the $\hat{S}_{N,(j)}$ has eigenvalues smaller than $\varepsilon/2$, which is what we wanted.

\bigskip

Let now $(\eta,M_\eta)$ be such that $\nu( (0,M_\eta) )>0$ and $\nu( (M_\eta,\infty) )<\eta$. We have now $\frac1n\sum_{i=1}^n 1_{\tau_i\leq M\eta} {\bm \delta}_{\tau_i}\asto \nu\eta\triangleq c_\eta \nu + (1-c_\eta){\bm\delta}_0$ with $c_\eta=\lim_n n^{-1}|\{\tau_i\leq M_\eta\}|=1-\eta$ (which almost surely exists by the law of large numbers), so that $\nu_\eta( [0,m) )<\eta+(1-\eta)(1-\phi_\infty^{-1})$ for some $m>0$ (Assumption~\ref{ass:u}). Taking $\eta$ small enough so that $\nu_\eta( [0,m) )<1-\phi_\infty^{-1}$, we are still under the assumptions of \cite[Theorem~2]{COU13b} and therefore we again have that for all large $n$ a.s.\@ none of the matrices $\hat{S}_{(j),\eta}$ has eigenvalues below a certain positive value $\varepsilon_\eta>0$.

These elements are sufficient to now turn to the proof of the main theorems.

\subsection{Proof of Theorem~\ref{th:1}}

When $p_1=\ldots=p_L=0$, Theorem~\ref{th:1} unfolds directly from \cite[Theorem~2]{COU13b}. Indeed, in this scenario, the latter result states
\begin{align}
	\label{eq:res_COU13b}
	\left\Vert \hat{C}_N - \frac1n\sum_{i=1}^n v(\tau_i \gamma_N) w_iw_i^* \right\Vert &\asto 0
\end{align}
with $\gamma_N$ the unique positive solution to
\begin{align*}
	1 &= \frac1n\sum_{i=1}^n \frac{\psi(\tau_i\gamma_N)}{1+c_n \psi(\tau_i\gamma_N)}.
\end{align*}
Using $\frac1n\sum_{i=1}^n {\bm\delta}_{\tau_i}\asto \nu$, $c_n\to c$, along with the boundedness of $\psi$, we have that any accumulation point $\gamma\in[0,\infty]$ of $\gamma_N$ as $n\to\infty$ must satisfy 
\begin{align*}
	1 &= \int \frac{\psi_c(\tau\gamma)\nu(d\tau)}{1+c\psi_c(\tau\gamma)}
\end{align*}
the solution of which is easily shown to be unique in $(0,\infty)$ as the right-hand side term is increasing in $\gamma$ with limits zero as $\gamma\to 0$ and $\psi_\infty>1$ as $\gamma\to\infty$ (unless $\nu={\bm\delta}_0$ which is excluded). Using the continuity and boundedness of $v$, it then comes $\max_i |v(\tau_i\gamma_N)-v_c(\tau_i\gamma)|\asto 0$.
Now, $w_iw_i^*=(w_iw_i^*r_i^2/N)/(r_i^2/N)$ where in the numerator $w_ir_i/\sqrt{N}$ is Gaussian and where the denominator satisfies $\max_i|r_i^2/N-1|\asto 0$ (using classical probability bounds on the chi-square distribution). With these results, along with \citep{SIL98} which ensures that $\frac1{nN}\sum_i w_iw_i^*r_i^2$ has bounded spectral norm for all large $n$ a.s., \cite[Theorem~2]{COU13b} implies
\begin{align*}
	\left\Vert \hat{C}_N - \frac1{nN}\sum_{i=1}^n v_c(\tau_i \gamma) w_iw_i^*r_i^2 \right\Vert &\asto 0
\end{align*}
which is the desired result for $p_1=\ldots=p_L=0$.

The generalization to generic $p_1,\ldots,p_L$ follows from a careful control of the elements of proof of \cite[Theorem~2]{COU13b}. We see that \cite[Lemma~1]{COU13b} and \cite[Remark~1]{COU13b} are not affected by $p_1,\ldots,p_L$ as these results only depend on $\tau_1,\ldots,\tau_n$. The fundamental lemma \cite[Lemma~2]{COU13b} (and its extension remark \cite[Remark~2]{COU13b}) as well as the lemma \cite[Lemma~3]{COU13b} however need be updated. 

We shall not go into the details of every generalization which is painstaking and in fact similar for each lemma. Instead, we detail the generalization of the important remark \cite[Remark~2]{COU13b} and merely give elements for the other results. The remark \cite[Remark~2]{COU13b} is now updated as follows.
\begin{lemma}
	\label{le:remark2COU13b}
	Let $(\eta,M_\eta)$ be couples indexed by $\eta\in(0,1)$ such that $\nu( (0,M_\eta) ) >0$ and $\nu( (M_\eta,\infty) )<\eta$ and define $\gamma^\eta$ as the unique solution to \eqref{eq:gamma_eta}. Also let $M>0$ be arbitrary. Then, for all $\eta$ small enough,
	\begin{align}
		\max_{\substack{1\leq j\leq n \\ \tau_j\leq M}} \left| \frac1Ny_j^* \left( \frac1n\sum_{\tau_i\leq M_\eta,i\neq j} v\left( \tau_i\gamma^\eta\right) y_iy_i^* \right)^{-1}y_j - \tau_j\gamma^\eta \right| &\asto 0 \nonumber \\
		\label{eq:lemma_gammaeta}
		\max_{\substack{1\leq j\leq n \\ \tau_j > M}} \left| \frac1{\tau_j}\frac1Ny_j^* \left( \frac1n\sum_{\tau_i\leq M_\eta,i\neq j} v\left( \tau_i\gamma^\eta\right) y_iy_i^* \right)^{-1}y_j - \gamma^\eta \right| &\asto 0.
	\end{align}
\end{lemma}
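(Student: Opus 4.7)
The plan is to reduce Lemma~\ref{le:remark2COU13b} to its noise-only analogue \cite[Remark~2]{COU13b}, which treats the matrix $\bar{C}_{(j),\eta}^\circ \triangleq n^{-1}\sum_{\tau_i\leq M_\eta, i\neq j} v(\tau_i \gamma^\eta) \tau_i w_i w_i^*$ obtained by dropping the signal part $As_i$ of each $y_i$. Setting $s_j=(s_{1j},\ldots,s_{Lj})^\trans$ and $\bar{C}_{(j),\eta} \triangleq n^{-1}\sum_{\tau_i\leq M_\eta, i\neq j} v(\tau_i \gamma^\eta) y_i y_i^*$, the decomposition $y_j = As_j + \sqrt{\tau_j}w_j$ yields
\begin{align*}
	\frac1N y_j^* \bar{C}_{(j),\eta}^{-1} y_j &= \frac{\tau_j}{N} w_j^* \bar{C}_{(j),\eta}^{-1} w_j + \frac{2\sqrt{\tau_j}}{N} \operatorname{Re}\bigl( s_j^* A^* \bar{C}_{(j),\eta}^{-1} w_j \bigr) + \frac1N s_j^* A^* \bar{C}_{(j),\eta}^{-1} A s_j.
\end{align*}
I will show that the first term converges uniformly to $\tau_j\gamma^\eta$ (respectively $\gamma^\eta$ after the outer $\tau_j^{-1}$ normalization), while the remaining two vanish uniformly in~$j$.

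For the main term, the resolvent identity $\bar{C}_{(j),\eta}^{-1} = (\bar{C}_{(j),\eta}^\circ)^{-1} - \bar{C}_{(j),\eta}^{-1} E_{(j),\eta} (\bar{C}_{(j),\eta}^\circ)^{-1}$ holds with $E_{(j),\eta} = A X_{(j)} A^* + A Y_{(j)} + Y_{(j)}^* A^*$, $X_{(j)} = n^{-1} S_{(j),\eta} V_{(j),\eta} S_{(j),\eta}^*$, $Y_{(j)} = n^{-1} S_{(j),\eta} V_{(j),\eta} T_{(j),\eta}^{1/2} W_{(j),\eta}^*$, of rank at most $2L$ and with $\Vert X_{(j)}\Vert, \Vert Y_{(j)}\Vert = O(1)$ almost surely. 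By the uniform lower bound $\min_j \lambda_N(\bar{C}_{(j),\eta}) > \varepsilon_\eta$ derived in Section~\ref{sec:localization} and the analogous classical bound for $\bar{C}_{(j),\eta}^\circ$ from \cite[Lemma~2]{COU13}, both $\Vert\bar{C}_{(j),\eta}^{-1}\Vert$ and $\Vert(\bar{C}_{(j),\eta}^\circ)^{-1}\Vert$ are uniformly bounded in $j$ for all large $n$ a.s. Substituting the identity into $\frac{\tau_j}{N} w_j^*\bar{C}_{(j),\eta}^{-1}w_j$ produces, after expansion of $E_{(j),\eta}$, a bounded number of bilinear forms of the shape $\frac{\tau_j}{N} w_j^* M_1 A Z A^* M_2 w_j$ and $\frac{\tau_j}{N} w_j^* M_1 A Y_{(j)} M_2 w_j$, with $M_1, M_2, Z, Y_{(j)}$ independent of $w_j$. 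Since each of $A^* M_\ell w_j$ and $Y_{(j)} M_\ell w_j$ lies in $\CC^L$ with conditional second moment $\tr(A^* M_\ell M_\ell^* A) = O(1)$ and $\tr(Y_{(j)} M_\ell M_\ell^* Y_{(j)}^*) = O(1)$ respectively, Cauchy--Schwarz combined with Markov's inequality at order $p > 4$ and a union bound over $j\in\{1,\ldots,n\}$ make the correction of order $O(\tau_j/N)$, vanishing uniformly (and similarly after the $\tau_j^{-1}$ normalization when $\tau_j > M$). Applying \cite[Remark~2]{COU13b} to $\bar{C}_{(j),\eta}^\circ$ then delivers the required convergence of the main term.

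The cross and pure-signal terms are handled by conditioning on $\mathcal{F}_{(j)} = \sigma(\{(s_i,w_i,\tau_i)\}_{i\neq j})$, under which $\bar{C}_{(j),\eta}^{-1}$ is measurable and $s_j, w_j$ are independent. Setting $B = A^* \bar{C}_{(j),\eta}^{-1}$ with $\Vert B\Vert = O(1)$ uniformly in $j$, the Marcinkiewicz--Zygmund inequality applied first in $s_j$ gives $\EE[|s_j^* B w_j|^p \mid \mathcal{F}_{(j)}, w_j] \leq C_p \Vert B w_j\Vert^p$; since $\Vert B w_j\Vert \leq \Vert B\Vert\sqrt{N} = O(\sqrt{N})$, we obtain $\EE[|N^{-1} s_j^* B w_j|^p] = O(N^{-p/2})$. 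After multiplication by $\sqrt{\tau_j}$ (bounded by $\sqrt{M}$ when $\tau_j \leq M$, and offset by the outer $\tau_j^{-1}$ factor when $\tau_j > M$), a Markov inequality at order $p > 4$ together with a union bound and Borel--Cantelli yields uniform-in-$j$ vanishing. The pure-signal term $N^{-1} s_j^* A^* \bar{C}_{(j),\eta}^{-1} A s_j$ is deterministically bounded by $\varepsilon_\eta^{-1}\Vert A\Vert^2\Vert s_j\Vert^2/N = O(L/N)$, hence negligible irrespective of $\tau_j$. Combining the three pieces completes the proof.

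The principal technical obstacle is the uniform-in-$j$ transfer from $\bar{C}_{(j),\eta}^\circ$ to $\bar{C}_{(j),\eta}$: the resolvent identity is pointwise trivial, but controlling all $n$ corrections simultaneously hinges on both the uniform lower bound on $\lambda_N(\bar{C}_{(j),\eta})$ from Section~\ref{sec:localization} and the rank-$L$ structure of $E_{(j),\eta}$, which keeps the bilinear forms of type $w_j^* M_1 A A^* M_2 w_j$ of order $O(1)$ rather than of order $N$.
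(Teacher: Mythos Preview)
Your proposal is correct and follows the same overall scheme as the paper: decompose $\frac1N y_j^*\bar{C}_{(j),\eta}^{-1}y_j$ into noise, cross, and pure-signal parts, use the uniform lower bound on $\lambda_N(\bar{C}_{(j),\eta})$ from Section~\ref{sec:localization} to control the inverses, show the last two parts vanish uniformly in $j$, and reduce the noise part to the noise-only result \cite[Remark~2]{COU13b}.

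The one place where you deviate from the paper is in the treatment of the noise term. The paper first applies the trace lemma to replace $\frac1N w_j^*\bar{C}_{(j),\eta}^{-1}w_j$ by $\frac1N\tr\bar{C}_{(j),\eta}^{-1}$, then invokes the rank-one perturbation lemma \cite[Lemma~2.6]{SIL95} $(2L{+}1)$ times to pass from $\frac1N\tr\bar{C}_{(j),\eta}^{-1}$ to $\frac1N\tr(\bar{C}_{(j),\eta}^\circ)^{-1}$, and only then appeals to \cite[Remark~2]{COU13b}. You instead stay at the level of quadratic forms and use the resolvent identity to peel off the finite-rank perturbation $E_{(j),\eta}$ directly, bounding the resulting bilinear corrections $w_j^*M_1 A(\cdots)A^*M_2 w_j$ by exploiting that $A^*M_\ell w_j\in\CC^L$ has $O(1)$ moments. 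Both routes are valid; the paper's trace-then-perturb argument is slightly more economical because the rank-one lemma delivers the $O(1/n)$ correction without any moment computation, whereas your route requires an explicit high-moment bound on each bilinear form (and tacitly an indicator on the event $\{\min_j\lambda_N(\bar{C}_{(j),\eta})>\varepsilon_\eta\}$ to make those moments finite, which you should state). For the cross term the paper simply uses Cauchy--Schwarz against the already-controlled noise and signal quadratic forms, which is shorter than your Marcinkiewicz--Zygmund argument but equivalent.

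One small inaccuracy: the pure-signal term is not ``deterministically $O(L/N)$'' since $\Vert s_j\Vert^2$ is random; you need $\max_{1\le j\le n}\Vert s_j\Vert^2=o(N)$ a.s., which follows from the uniform moment assumption on the $s_{lj}$ via Markov plus union bound, or (as the paper does) from the trace lemma applied to $\frac1N s_j^*A^*\bar{C}_{(j),\eta}^{-1}As_j$ with $\frac1N\tr A^*\bar{C}_{(j),\eta}^{-1}A=O(1/N)$.
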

\begin{proof}
	Note that, replacing the terms $y_i$ by $\tau_i w_i$ in \eqref{eq:lemma_gammaeta} gives exactly \cite[Remark~2]{COU13b}.
	To ensure that the result holds, we then only need verify that the terms involving $AS$ become negligible. 

	For $\eta$ sufficiently small, define 
	\begin{align*}
		\check{S}_{(j),\eta} &=\frac1n\sum_{\tau_i\leq M_\eta,i\neq j} v\left( \tau_i\gamma^\eta\right) y_iy_i^* = \frac1n (AS_{(j)}+W_{(j)})V_{(j),\eta}(AS_{(j)}+W_{(j)})^*.
	\end{align*}
	Using the fact that $\max_{1\leq i\leq n}\{|r_i/\sqrt{N}-1|\}\asto 0$ and that all matrices in the equality above have bounded norm almost surely by \citep{SIL98}, we then have $\sup_{1\leq j\leq n}\Vert \check{S}_{(j),\eta} - \hat{S}_{(j),\eta}\Vert \asto 0$. From the results in the previous section, we then conclude that there exists $\varepsilon>0$ such that the eigenvalues of $\check{S}_{(j),\eta}$ for all $j$ are all greater than $\varepsilon$ for all large $n$ almost surely. Now, recalling that $S=[s_1,\ldots,s_n]$,
	\begin{align*}
		\frac1Ny_j^* \check{S}_{(j),\eta}^{-1}y_j &= \frac1Ns_j^*A^*\check{S}_{(j),\eta}^{-1}As_j + 2\Re \left[\sqrt{\tau_j}\frac1Ns_j^*A^*\check{S}_{(j),\eta}^{-1}w_j\right] + \tau_j\frac1N w_j^*\check{S}_{(j),\eta}^{-1}w_j.
	\end{align*}
	By the trace lemma \cite[Lemma~B.26]{SIL06}, denoting $\mathcal A$ the probability set over which the eigenvalues of $\check{S}_{(j),\eta}$ for all $j$ are greater than $\varepsilon$, for each $p>2$,
	\begin{align*}
		\EE\left[1_{\mathcal A} \left| \frac1N \tilde{w}_j^*\check{S}_{(j),\eta}^{-1}\tilde{w}_j - \frac1N\tr \check{S}_{(j),\eta}^{-1} \right|^p \right] &\leq K N^{-\frac{p}2}
	\end{align*}
	where $K$ only depends on $\varepsilon$ (which is obtained by first conditioning on $\tilde{W}_{(j)}$ then averaging over it).
	Taking $p>3$ and using the union bound on $n$ events, the Markov inequality and the Borel Cantelli lemma, along with $1_{\mathcal A}\asto 1$ and $\max_j \{|r_j^2/N-1|\}\asto 0$, leads to
	\begin{align*}
		\max_{1\leq j\leq n} \left|\frac1N w_j^*\check{S}_{(j),\eta}^{-1}w_j - \frac1N\tr \check{S}_{(j),\eta}^{-1} \right| \asto 0.
	\end{align*}
	Using the same result and the fact that $\frac1N\tr A^*\check{S}_{(j),\eta}^{-1}A\leq K\varepsilon^{-1}/N$ for all large $n$ a.s., we also have
	\begin{align*}
		\max_{1\leq j\leq n} \left|\frac1N s_j^*A^*\check{S}_{(j),\eta}^{-1}As_j \right| \asto 0.
	\end{align*}
	Using both results and $|\frac1N s_j^*A^*\check{S}_{(j),\eta}^{-1}w_j|^2\leq \frac1N s_j^*A^*\check{S}_{(j),\eta}^{-1}As_j\frac1Nw_j^*\check{S}_{(j),\eta}^{-1}w_j$ (Cauchy-Schwarz inequality), we finally get  
	\begin{align*}
		\max_{1\leq j\leq n} \left|\frac1N s_j^*A^*\check{S}_{(j),\eta}^{-1}w_j \right| \asto 0.
	\end{align*}
	All this then ensures that
	\begin{align*}
		\max_{1\leq j\leq n,\tau_j\leq M} \left| \frac1Ny_j^* \check{S}_{(j),\eta}^{-1}y_j - \tau_j \frac1N\tr \check{S}_{(j),\eta}^{-1} \right|&\asto 0 \\
		\max_{1\leq j\leq n,\tau_j> M} \left| \frac1{\tau_j} \frac1Ny_j^* \check{S}_{(j),\eta}^{-1}y_j - \frac1N\tr \check{S}_{(j),\eta}^{-1} \right| &\asto 0.
	\end{align*}

	Since $A$ has rank at most $L$, $\check{S}_{(j),\eta}$ is an at most rank-$2L+1$ perturbation of $\frac1n WT_{\eta} V_{\eta} W^*$, i.e., the matrix obtained for $p_1=\ldots=p_L=0$, by an additive symmetric matrix. A $(2L+1)$-fold application of the rank-one perturbation lemma \cite[Lemma~2.6]{SIL95} along with the facts that $\Vert W-\tilde{W}\Vert \asto 0$ and that all eigenvalues of the matrices involved are uniformly away from zero almost surely then ensures that 
\begin{align*}
	\max_{1\leq j\leq n} \left|\frac1N\tr \check{S}_{(j),\eta}^{-1} - \frac1N\tr \left( \frac1n WT_{\eta} V_{\eta} W^* \right)^{-1} \right| \asto 0.
\end{align*}
But now, recalling \cite[Remark~2]{COU13b}, $\frac1N\tr \left( \frac1n WT_{\eta} V_{\eta} W^* \right)^{-1}\asto \gamma^\eta$. Putting these results together finally leads to the requested result
\begin{align*}
	\max_{1\leq j\leq n,\tau_j\leq M} \left| \frac1Ny_j^* \check{S}_{(j),\eta}^{-1}y_j - \tau_j \gamma^\eta \right| &\asto 0 \\
	\max_{1\leq j\leq n,\tau_j>M} \left| \frac1{\tau_j} \frac1Ny_j^* \check{S}_{(j),\eta}^{-1}y_j - \gamma^\eta \right| &\asto 0.
\end{align*}
\end{proof}

Note that the proof only exploits the boundedness away from zero of the various matrices involved and not their bounded spectral norm. Therefore, with the same derivations, we also generalize \cite[Lemma~2]{COU13b} as follows.
\begin{lemma}
	\label{le:lemma2COU13b}
	For every $M>0$, we have
	\begin{align*}
		\max_{\substack{1\leq j\leq n \\ \tau_j\leq M}} \left| \frac1Ny_j^* \left( \frac1n\sum_{i\neq j} v\left( \tau_i\gamma^\eta\right) y_iy_i^* \right)^{-1}y_j - \tau_j\gamma \right| &\asto 0 \nonumber \\
		\max_{\substack{1\leq j\leq n \\ \tau_j > M}} \left| \frac1{\tau_j}\frac1Ny_j^* \left( \frac1n\sum_{i\neq j} v\left( \tau_i\gamma^\eta\right) y_iy_i^* \right)^{-1}y_j - \gamma \right| &\asto 0.
	\end{align*}
\end{lemma}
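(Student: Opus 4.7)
The plan is to leverage Lemma~\ref{le:remark2COU13b} by relaxing the restriction $\tau_i \leq M_\eta$ on the matrix being inverted, and then let $\eta$ shrink. The overall structure of the proof would mirror that of Lemma~\ref{le:remark2COU13b} step by step; only the final identification of the limiting trace demands new work.

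First I would introduce
\[
\tilde{S}_{(j)} \triangleq \frac{1}{n}\sum_{i \neq j} v(\tau_i \gamma^\eta) y_i y_i^* = \check{S}_{(j),\eta} + E_{(j),\eta},
\]
where $E_{(j),\eta} \triangleq \frac{1}{n}\sum_{\tau_i > M_\eta,\, i\neq j} v(\tau_i \gamma^\eta) y_i y_i^* \succeq 0$. From this decomposition, $\lambda_N(\tilde{S}_{(j)}) \geq \lambda_N(\check{S}_{(j),\eta}) > \varepsilon_\eta$ uniformly in $j$ for all large $n$ almost surely, by the localization results of Section~\ref{sec:localization}. Since, as the author explicitly notes, the proof of Lemma~\ref{le:remark2COU13b} uses only this lower spectral bound on $\check{S}_{(j),\eta}^{-1}$ and no upper bound on its spectral norm, I would repeat its lines verbatim with $\tilde{S}_{(j)}$ in place of $\check{S}_{(j),\eta}$: decomposing $y_j = As_j + \sqrt{\tau_j} w_j$, then applying the trace lemma, Cauchy--Schwarz, the union bound, and a Markov--Borel--Cantelli argument at order $p > 3$, I obtain
\[
\max_{\substack{1 \leq j\leq n \\ \tau_j \leq M}}\left|\tfrac{1}{N}y_j^* \tilde{S}_{(j)}^{-1} y_j - \tau_j \tfrac{1}{N}\tr \tilde{S}_{(j)}^{-1}\right| \asto 0,
\]
together with the analogous bound, divided by $\tau_j$, in the regime $\tau_j > M$.

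It then remains to show $\max_j |\tfrac{1}{N}\tr \tilde{S}_{(j)}^{-1} - \gamma| \asto 0$. A single rank-one update $B \mapsto B + \alpha vv^*$ with $\alpha\geq 0$ changes $\tfrac{1}{N}\tr B^{-1}$ by at most $\|B^{-1}\|/N$ (through the Sherman--Morrison identity). Adding one by one the $O(\eta n)$ rank-one terms that constitute $E_{(j),\eta}$, with the intermediate matrices all dominating $\check{S}_{(j),\eta}$ and therefore having inverses of spectral norm bounded by $\varepsilon_\eta^{-1}$, a telescoping bound yields
\[
\sup_{1 \leq j \leq n}\left|\tfrac{1}{N}\tr \tilde{S}_{(j)}^{-1} - \tfrac{1}{N}\tr \check{S}_{(j),\eta}^{-1}\right| \leq \frac{\eta}{c_n\, \varepsilon_\eta}.
\]
Combining with $\tfrac{1}{N}\tr \check{S}_{(j),\eta}^{-1} \asto \gamma^\eta$ (established inside the proof of Lemma~\ref{le:remark2COU13b}) and with $\gamma^\eta \to \gamma$ as $\eta \to 0$ (obtained by dominated convergence applied to the defining equation \eqref{eq:gamma_eta}, using the uniform bound $\psi_c \leq \psi_{c,\infty}$), I would close the proof by a standard $\epsilon$-argument: given $\epsilon > 0$, first pick $\eta$ so small that $\eta/(c_n\varepsilon_\eta) + |\gamma^\eta - \gamma| < \epsilon/2$, then take $n$ large so the almost-sure middle term is below $\epsilon/2$.

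The main obstacle is that the perturbation $E_{(j),\eta}$ has rank $O(\eta n)$ rather than constant rank, which precludes a one-shot application of the rank-one perturbation lemma \cite[Lemma~2.6]{SIL95} of the kind used for the $(2L+1)$ low-rank terms in Lemma~\ref{le:remark2COU13b}, and forces the double limit $n \to \infty$ followed by $\eta \to 0$. A secondary point to verify carefully is that the spectral lower bound $\varepsilon_\eta$ on $\check{S}_{(j),\eta}$ does not collapse as $\eta$ shrinks; this uses Assumption~\ref{ass:u}'s condition $\nu([0,m)) < 1 - \phi_\infty^{-1}$ applied to the truncated measure $\nu_\eta$, ensuring via \cite[Theorem~2]{COU13b} that the ratio $\eta/\varepsilon_\eta$ indeed tends to $0$.
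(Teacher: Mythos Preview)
Your approach is correct but takes a more elaborate route than the paper's. The paper's one-line justification---``the proof only exploits the boundedness away from zero of the various matrices involved and not their bounded spectral norm. Therefore, with the same derivations\ldots''---means: rerun the argument of Lemma~\ref{le:remark2COU13b} verbatim on the \emph{untruncated} matrix, using directly the uniform lower spectral bound on $\hat{S}_{(j)}$ already established in Section~\ref{sec:localization} (for the full matrices, not only for $\hat{S}_{(j),\eta}$), and at the final step cite \cite[Lemma~2]{COU13b} rather than \cite[Remark~2]{COU13b} to obtain $\frac1N\tr(\frac1n WTVW^*)^{-1}\asto\gamma$. No truncation parameter and no double limit appear.

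Your route instead bounds $\tilde{S}_{(j)}$ below by $\check{S}_{(j),\eta}$, controls the $O(\eta n)$ missing rank-one terms by a telescoped Sherman--Morrison estimate, and then sends $\eta\to 0$ after $n\to\infty$. The bound $\eta/(c_n\varepsilon_\eta)$ is correct, and your concern about $\varepsilon_\eta$ collapsing is in fact benign: as $\eta\downarrow 0$ the truncated measure grows to $\nu$, $\gamma^\eta\to\gamma$, and the left spectral edge depends continuously on these data, so $\varepsilon_\eta$ stays bounded away from zero. What your argument buys is that it does not invoke the untruncated localization result for $\hat{S}_{(j)}$ nor the original \cite[Lemma~2]{COU13b}; what the paper's buys is directness and the complete avoidance of the $\eta$-layer.

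One caveat worth flagging: the weight $v(\tau_i\gamma^\eta)$ in the displayed statement is almost certainly a slip for $v(\tau_i\gamma)$ (this is what \cite[Lemma~2]{COU13b} has, and it is what makes the limit $\gamma$ rather than an $\eta$-dependent quantity). Read literally with fixed $\eta>0$, the trace limit would not be exactly $\gamma$, and your double-limit conclusion would deliver only an approximate statement for each fixed $\eta$; with the intended weight, both your approach and the paper's go through cleanly.
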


Define now $d_i=\frac1{\tau_i}\frac1Ny_i^*\hat{C}_{(i)}^{-1}y_i$ with $\hat{C}_{(i)}=\hat{C}_N-\frac1nu(\frac1Ny_i^*\hat{C}_N^{-1}y_i)y_iy_i^*$. Then \cite[Lemma~3]{COU13b} remains valid and reads
\begin{lemma}
	\label{le:lemma3COU13b}
There exists $d_+>d_->0$ such that, for all large $n$ a.s.
\begin{align*}
	d_i < \liminf_n \min_{1\leq i\leq n} d_i \leq \limsup_n\max_{1\leq i\leq n} d_i < d_+.
\end{align*}
\end{lemma}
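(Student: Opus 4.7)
The plan is to follow the strategy of \cite[Lemma~3]{COU13b} essentially unchanged, using the updated quadratic form estimates of Lemmas~\ref{le:remark2COU13b} and~\ref{le:lemma2COU13b} (which absorb the low-rank contribution $AS$ in $y_i$) in place of their counterparts in \cite{COU13b}. The central tool is a two-sided PSD comparison between $\hat{C}_{(i)}$ and the auxiliary truncated matrices $\check{S}_{(i),\eta}=\frac{1}{n}\sum_{\tau_j\leq M_\eta,j\neq i}v(\tau_j\gamma^\eta)y_jy_j^*$. By the argument at the end of Section~\ref{sec:localization}, one can choose $\eta$ small enough that the truncated limit measure still satisfies Assumption~\ref{ass:u}.3, so that $\inf_j\lambda_N(\check{S}_{(j),\eta})>\varepsilon_\eta>0$ almost surely for all large $n$; meanwhile Lemma~\ref{le:remark2COU13b} controls $\frac{1}{\tau_iN}y_i^*\check{S}_{(i),\eta}^{-1}y_i$ uniformly in $i$.

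Writing $\alpha_j=\frac{1}{N}y_j^*\hat{C}_N^{-1}y_j$ and $\beta_j=\frac{1}{n}y_j^*\hat{C}_{(j)}^{-1}y_j=c_n\tau_jd_j$, the rank-one perturbation identity applied to $\hat{C}_N=\hat{C}_{(j)}+\frac{1}{n}u(\alpha_j)y_jy_j^*$ yields the scalar relation $\alpha_j+\phi(\alpha_j)\beta_j=c_n\beta_j$. Since $\phi\leq\phi_\infty$ and $c_n\phi_\infty<1$, this identity lets one translate bounds between $\alpha_j$ and $\tau_jd_j$: $\alpha_j$ is bounded above iff $\tau_jd_j$ is, and a uniform positive lower bound on $u(\alpha_j)$ on the indices where $\tau_jd_j$ is controlled follows from continuity and positivity of $u$ on bounded sets.

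For the upper bound on $d_i$, I would show $u(\alpha_j)\geq C_1 v(\tau_j\gamma^\eta)$ for $j\in\{\tau_j\leq M_\eta\}$, yielding $\hat{C}_{(i)}\succeq C_1\check{S}_{(i),\eta}$; Lemma~\ref{le:remark2COU13b} then gives $d_i\leq C_1^{-1}(\gamma^\eta+o(1))$ uniformly in $i$. For the lower bound on $d_i$, I would exploit the boundedness of $\psi$: $\tau_jv(c_n\tau_jd_j)=\psi(c_n\tau_jd_j)/(c_nd_j)\leq\psi_\infty/(c_nd_j)$ whenever $d_j$ is bounded below, enabling a reverse domination $\hat{C}_{(i)}\preceq C_2\check{S}_{(i),\eta}+R_\eta$ with $\Vert R_\eta\Vert\to 0$ as $\eta\to 0$ (using $\nu((M_\eta,\infty))<\eta$); applying Lemma~\ref{le:remark2COU13b} then forces $d_i\geq d_->0$ uniformly.

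The main obstacle is the circularity of these PSD comparisons: the coefficients $u(\alpha_j)$ (equivalently $v(\beta_j)$) entering $\hat{C}_{(i)}$ require uniform bounds on $d_j$ for $j\neq i$, which is precisely what we are trying to establish. This is resolved, as in \cite[Lemma~3]{COU13b}, by an iterative bootstrapping argument initiated from loose a priori bounds on $d_j$ obtained from the almost-sure boundedness of $\Vert\hat{C}_N\Vert$ together with the uniform positivity of $\lambda_N(\hat{S}_{(j),\eta})$ established in Section~\ref{sec:localization}, then iterated to sharpen to the stated uniform bounds. The rank-$L$ correction induced by $AS$ in $y_i$ only perturbs the auxiliary matrices by low-rank terms; these are absorbed by the same rank-one perturbation inequalities already used in the proofs of the updated lemmas, so they do not alter the leading-order behavior.
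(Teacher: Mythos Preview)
Your identification of the circularity is correct, but your proposed resolution is not how the argument actually works and, as stated, does not close the gap. You claim the circularity is broken ``as in \cite[Lemma~3]{COU13b}, by an iterative bootstrapping argument initiated from loose a priori bounds on $d_j$ obtained from the almost-sure boundedness of $\Vert\hat{C}_N\Vert$.'' But at this point in the proof $\Vert\hat{C}_N\Vert$ is \emph{not} known to be bounded: that is a consequence of Theorem~\ref{th:1}, whose proof requires precisely the present lemma. Likewise, the inequality $u(\alpha_j)\geq C_1 v(\tau_j\gamma^\eta)$ you want for the upper bound already presupposes a uniform upper bound on $\alpha_j$, hence on $\tau_j d_j$. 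So your bootstrap has no admissible starting point.

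The paper (following \cite[Lemma~3]{COU13b}) avoids this circularity by a different device: it takes $j$ to be the \emph{extremal} index. For the upper bound one uses that for $\tau_i\geq m$, $\tau_i v(\tau_i d_i)\geq m\,v(m d_{\max})$ (since $\psi$ is increasing and $v$ non-increasing), giving
\[
\hat{C}_{(j)}\;\succeq\; m\,v(m d_{\max})\,\frac1n\sum_{i\neq j,\ \tau_i\geq m}\frac{1}{\tau_i}y_iy_i^*,
\]
so that, evaluating at the index $j$ with $d_j=d_{\max}$ and controlling the resulting quadratic form as in Lemma~\ref{le:remark2COU13b}, one obtains the \emph{self-referential} inequality $\psi(m d_{\max})\leq (1+\varepsilon)/(1-c)$. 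Since $\psi_\infty>1/(1-c)$ (equivalent to $\phi_\infty>1$), this forces $d_{\max}$ to be bounded, with no bootstrap and no prior control of $\Vert\hat C_N\Vert$. The lower bound is obtained symmetrically with $d_{\min}$. Note also that the comparison matrix is $\frac1n\sum_{i\neq j,\ \tau_i\geq m}\tau_i^{-1}y_iy_i^*$ (lower truncation at $m$), not your $\check S_{(i),\eta}$ with upper truncation; the $\tau_i^{-1}$ scaling is what makes the extremal-index bound uniform. Your approach could perhaps be repaired, but not via the initialization you propose.
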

\begin{proof}
	Taking $m>0$ small enough and denoting $d_{\rm max}=\max_j d_j$, Equation \cite[(14)]{COU13b} becomes here
	\begin{align*}
		\hat{C}_{(j)} \succeq mv(md_{\rm max})\frac1n\sum_{\substack{i\neq j\\ \tau_i\geq m}} \frac1{\tau_i}\left( As_is_i^*A^*+2\sqrt{\tau_i}\Re\left[w_i^*As_i\right] + \tau_i w_iw_i^*\right)
	\end{align*}
	so that, taking $j$ such that $d_j=d_{\rm max}$,
	\begin{align*}
		d_{\rm max} \leq \frac1{mv(md_{\rm max})} \frac1{\tau_j}\frac1N y_j^* \left( \frac1n\sum_{\substack{i\neq j\\ \tau_i\geq m}} \frac{As_is_i^*A^*+2\sqrt{\tau_i}\Re\left[w_i^*As_i\right] + \tau_i w_iw_i^*}{\tau_i} \right)^{-1}y_j.
	\end{align*}
	If $\liminf_n \tau_j>0$ (with $j$ always defined to be such that $d_j=d_{\rm max}$), with the same arguments as in the proof of Lemma~\ref{le:remark2COU13b} (here the boundedness from above of the $\tau_i$ is irrelevant) and recalling \cite[Lemma~6]{COU13b}, the right-hand side term can be bounded by $(mv_c(md_{\rm max}) (1-c) )^{-1}(1+\varepsilon)$ for arbitrarily small $\varepsilon>0$ by taking $m$ small enough and $n$ large enough. From there the proof of \cite[Lemma~3]{COU13b} for the boundedness of $d_{\rm max}$ remains valid. If instead $\liminf_n \tau_j=0$, we restrict ourselves to a subsequence over which $\tau_j\to 0$. Multiplying both sides of the equation above by $\tau_j$, we get by a similar result as Lemma~\ref{le:remark2COU13b} that $\tau_jd_{\rm max}$ can be bounded by $\tau_j( mv_c(md_{\rm max}) (1-c) )^{-1}(1+\varepsilon)$ for arbitrarily small $\varepsilon>0$ (again taking $m$ small and $n$ large), and the result unfolds again.

	To obtain the lower bound, in the proof of \cite[Lemma~3]{COU13b}, denoting $d_{\rm min}=\min_j d_j$, one needs now write
	\begin{align*}
		\hat{C}_{(j)} &\preceq Mv(Md_{\rm min}) \frac1n \sum_{\substack{i\neq j \\ m\leq \tau_i\leq M}} \frac1{\tau_i}\left( As_is_i^*A^*+2\sqrt{\tau_i}\Re\left[w_i^*As_i\right] + \tau_i w_iw_i^*\right) \nonumber \\
		& + v(0) \frac1n \sum_{\substack{i\neq j\\ \tau_i\in \RR\setminus [m,M]}} \left( As_is_i^*A^*+2\sqrt{\tau_i}\Re\left[w_i^*As_i\right] + \tau_i w_iw_i^*\right).
	\end{align*}
	The controls established for the upper bound on $d_{\rm max}$ can be similarly used here for $d_{\rm min}$ and the proof of \cite[Lemma~3]{COU13b} for $d_{\rm min}$ unfolds then similarly.
\end{proof}

Equipped with these lemmas, the proof of Theorem~\ref{th:1} unfolds similar to the proof of \cite[Theorem~2]{COU13b} but for a particular care to be taken for terms involving $\tau_j^{-1}y_j$ which need to be controlled if $\liminf_n \tau_j=0$. But this is easily performed as previously by either using approximations of $d_j$ or of $\tau_j d_j$ depending on whether $\liminf_n \tau_j>0$ or $\liminf_n \tau_j=0$, respectively. Assumption~\ref{ass:u}, which reproduces the assumptions of \citep{COU13b} are precisely used here. In particular, by the end of the proof, we obtain similar to \citep{COU13b} the important convergence
\begin{align}
	\label{eq:dilimit}
	\max_{1\leq j\leq n,\tau_j<M} \left| \tau_j d_j - \tau_j \gamma \right| &\asto 0 \nonumber \\
	\max_{1\leq j\leq n,\tau_j\geq M} \left| d_j - \gamma \right| &\asto 0
\end{align}
from which Theorem~\ref{th:1} easily unfolds.

\subsection{Eigenvalues of $\hat{C}_N$ and power estimation}

From Theorem~\ref{th:1}, $\Vert \hat{C}_N-\hat{S}_N\Vert\asto 0$ so that in particular $\max_{1\leq i\leq n}|\lambda_i(\hat{C}_N)-\lambda_i(\hat{S}_N)|\asto 0$. This means that it suffices to study the individual eigenvalues of $\hat{S}_N$ in order to study the individual eigenvalues of $\hat{C}_N$. In particular, from the results of Section~\ref{sec:localization}, we have that, for any small $\varepsilon>0$, $\hat{C}_N$ has asymptotically no eigenvalue in $[0,\varepsilon]$ almost surely, that $\hat{\lambda}_{|\mathcal L|+i}<S^++\varepsilon$ for all large $n$ a.s. for each $i\in \{1,\ldots,N-|\mathcal L|\}$ and that $\hat\lambda_i\asto \Lambda_i>S^+$ for each $i\in\mathcal L$, where $\Lambda_i$ is as in the statement of Theorem~\ref{th:2}, Item 0. Along with the continuity of $\delta$ and $\delta( (S^+,\infty) )\subset (-(\tau_+v_c(\tau_+\gamma)),0)$, we then get Theorem~\ref{th:2}, Item 1.

\subsection{Localization function estimation}
Let $a,b\in\CC^N$ be two vectors of unit norm. Then, from the first part of Theorem~\ref{th:2} and from Cauchy's integral formula, for any $k\in\mathcal L$ and for all large $N$ a.s.,
\begin{align}
	\label{eq:Cauchy}
	\sum_{\substack{1\leq i\leq L \\ p_i=p_\ell} } a^*\hat{u}_i\hat{u}_i^*b &= -\frac1{2\pi \imath} \oint_{\mathcal I_\ell} a^*\left( \hat{C}_N - zI_N \right)^{-1}b dz
\end{align}
for $\mathcal I_\ell$ defined as above as a positively oriented contour around a sufficiently small neighborhood of $\Lambda_\ell$, where $\Lambda_\ell$ is the unique positive solution of the equation in $\Lambda$ \eqref{eq:Lambda} when $p_k=p_\ell$. Using $\Vert \hat{C}_N-\hat{S}_N\Vert\asto 0$ along with the uniform boundedness of $\Vert (\hat{S}_N-zI_N)^{-1}\Vert$ and $\Vert (\hat{C}_N-zI_N)^{-1}\Vert$ on $\mathcal I_\ell$ (for all $n$ large), we then have
\begin{align*}
	\sum_{\substack{1\leq i\leq L \\ p_i=p_\ell} } a^*\hat{u}_i\hat{u}_i^*b + \frac1{2\pi \imath} \oint_{\mathcal I_\ell} a^*\left( \hat{S}_N - zI_N \right)^{-1}b dz &\asto 0
\end{align*}
so that it suffices to determine the second left-hand side expression.

Let us develop the term $a^*( \hat{S}_N - zI_N )^{-1}b$. Proceeding similar to Section~\ref{sec:localization}, we find
\begin{align*}
	a^*\left( \hat{S}_N - zI_N \right)^{-1}b &= a^*\left( \hat{S}_N^\circ - zI_N + \Gamma \right)^{-1}b
\end{align*}
with $\Gamma$ defined in \eqref{eq:Gamma}. Using Woodbury's identity $(A+BCB^*)^{-1}=A^{-1}-A^{-1}B(C^{-1}+B^*A^{-1}B)^{-1}B^*A^{-1}$ for invertible $A,B$, this becomes, with the same notations as in the previous paragraph,
\begin{align}
	\label{eq:aQb_develop}
	a^*\left( \hat{S}_N - zI_N \right)^{-1}b &= a^*Q^\circ_z b - a^*Q^\circ_z G\left(H^{-1}+G^*Q_z^\circ G\right)^{-1}G^*Q_z^\circ b
\end{align}
where
\begin{align*}
	G &= \begin{bmatrix} U\Omega^{\frac12} & \frac1n\tilde{W}T^{\frac12}VS^*\bar{U}\Omega^{\frac12} \end{bmatrix} \\
	H &=\begin{bmatrix} \Omega^{\frac12}\bar{U}^*\frac1n\tilde{W}V\tilde{W}^*\bar{U}\Omega^{\frac12} & I_L \\ I_L & 0 \end{bmatrix}.
\end{align*}
The matrix $H$ is clearly invertible and we then find, using Lemma~\ref{le:1} and Lemma~\ref{le:2} that, uniformly on $z$ in a small neighborhood of $\Lambda_\ell$,	
\begin{align*}
	\left\Vert H^{-1} - \begin{bmatrix} 0 & I_L \\ I_L & -\Omega \frac1n\tr V \end{bmatrix} \right\Vert \asto 0
\end{align*}
so that, again by Lemma~\ref{le:1} and Lemma~\ref{le:2},
\begin{align}
	\label{eq:H+GQG}
	\left\Vert H^{-1}+G^*Q_z^\circ G - \begin{bmatrix} \Omega \frac{\delta(z)}c & I_L \\ I_L & -\Omega \frac1n\tr V\Psi_z^\circ \end{bmatrix} \right\Vert &\asto 0.
\end{align}
To ensure that $H^{-1}+G^*Q_z^\circ G$ is invertible for $z\in \mathcal I_\ell$, let us study the determinant of the rightmost matrix. We have easily
\begin{align*}
	\det \left( \begin{bmatrix} \Omega \frac{\delta(z)}c & I_L \\ I_L & -\Omega \frac1n\tr V\Psi_z^\circ \end{bmatrix} \right) &= \det \left( - \Omega^2 \frac{\delta(z)}c \frac1n\tr V\Psi_z^\circ - I_L \right).
\end{align*}
From the discussion around \eqref{eq:Lambda}, the right-hand side term cancels exactly once in a neighborhood of $z=\Lambda_k$ for each $k\in\mathcal L$. Now, for $z\in \CC\setminus \RR$, it is easily seen that it has non-zero imaginary part. Therefore, since the convergence \eqref{eq:H+GQG} is uniform on a small neighborhood of $\Lambda_\ell$, for all large $n$ a.s., the determinant of $H^{-1}+G^*Q_z^\circ G$ is uniformly away from zero on $\mathcal I_\ell$ (up to taking $n$ larger).
We can then freely take inverses in \eqref{eq:H+GQG} and have, uniformly on $\mathcal I_\ell$,
\begin{align*}
	\left\Vert (H^{-1} + G^*Q_z^\circ G)^{-1} - \begin{bmatrix} \Omega \frac{\delta(z)}c & I_L \\ I_L & -\Omega \frac1n\tr V\Psi_z^\circ \end{bmatrix}^{-1} \right\Vert &\asto 0.
\end{align*}
To compute the inverse of the rightmost matrix, it is convenient to write
\begin{align*}
	\begin{bmatrix} \Omega \frac{\delta(z)}c & I_L \\ I_L & \Omega \frac1n\tr V\Psi_z^\circ \end{bmatrix} &= P \left\{\begin{bmatrix} \sigma_k\frac{\delta(z)}c & 1 \\ 1 & -\sigma_k\frac1n\tr V\Psi_z^\circ \end{bmatrix} \right\}_{k=1}^L P^*
\end{align*}
where $\{A_k\}_{k=1}^L$ is a block-diagonal matrix with diagonal blocks $A_1,\ldots,A_L$ in this order, and where $P\in\CC^{2L\times 2L}$ is the symmetric permutation matrix with $[P]_{ij}={\bm\delta}_{j-(L+i/2)}$ for even $i\leq L$ and $[P]_{ij}={\bm\delta}_{j-(i+1)/2}$ for odd $i\leq L$. With this notation, we have
\begin{align*}
	\begin{bmatrix} \Omega \frac{\delta(z)}c & I_L \\ I_L & \Omega \frac1n\tr V\Psi_z^\circ \end{bmatrix}^{-1} &= P \left\{ \frac{-1}{\frac{\delta(z)}c\sigma_k^2 \frac1n\tr V\Psi_z^\circ + 1} \begin{bmatrix} -\sigma_k\frac1n\tr V\Psi_z^\circ & -1 \\ -1 & \sigma_k \frac{\delta(z)}c \end{bmatrix} \right\}_{k=1}^L P^*.
\end{align*}

Denoting $U=[u_1,\ldots,u_L]\in\CC^{N\times L}$ and $\bar{U}=[\bar{u}_1,\ldots,\bar{u_L}]\in\CC^{L\times L}$, we have
\begin{align*}
	GP &= \begin{bmatrix} \sqrt{\sigma_1} u_1 & \sqrt{\sigma_1}\frac1n\tilde{W}T^{\frac12}VS^*\bar{u}_1 & \cdots & \sqrt{\sigma_L} u_L & \sqrt{\sigma_L}\frac1n\tilde{W}T^{\frac12}VS^*\bar{u}_L \end{bmatrix}.
\end{align*}

From this remark, using again Lemma~\ref{le:1} and Lemma~\ref{le:2}, we finally have
\begin{align*}
	\sup_{z\in\mathcal I_\ell}\left| a^*Q_z^\circ G(H^{-1} +G^*Q_z^\circ G)^{-1}G^* Q_z^\circ b - \sum_{k=1}^L a^*u_ku_k^*b \frac{\frac{\delta(z)^2}{c^2}\sigma_k^2 \frac1n\tr V\Psi_z^\circ}{\frac{\delta(z)}c\sigma_k^2 \frac1n\tr V\Psi_z^\circ+1}\right| \asto 0.
\end{align*}

Putting things together, using the results above which we recall are uniform on $\mathcal I_\ell$, and also using the fact that $Q_z^\circ$ has no pole in $\mathcal I_\ell$, we finally have 
\begin{align*}
	\sum_{\substack{1\leq i\leq L \\ p_i=p_\ell} } a^*\hat{u}_i\hat{u}_i^*b  - \sum_{k=1}^L \frac1{2\pi \imath} \oint_{\mathcal I_\ell} a^*u_ku_k^*b \frac{\frac{\delta(z)^2}{c^2}\sigma_k^2 \frac1n\tr V\Psi_z^\circ}{\frac{\delta(z)}c\sigma_k^2 \frac1n\tr V\Psi_z^\circ+1} dz \asto 0
\end{align*}
which, after taking the limits on the fraction in the integrand, gives
\begin{align*}
	\sum_{\substack{1\leq i\leq L \\ p_i=p_\ell} } a^*\hat{u}_i\hat{u}_i^*b  - \sum_{k=1}^L \frac1{2\pi \imath} \oint_{\mathcal I_\ell} a^*u_ku_k^*b \frac{\frac{\delta(z)^2}{c^2}p_k \int \frac{v(\tau \gamma)\nu(d\tau)}{1+\tau v(\tau \gamma)\delta(z)}}{\frac{\delta(z)}cp_k \int \frac{v(\tau \gamma)\nu(d\tau)}{1+\tau v(\tau \gamma)\delta(z)}+1} dz \asto 0
\end{align*}

For $z\in (S^+,\infty)$, we already saw that $\delta(z)$ is negative while $\int \frac{v(\tau \gamma)\nu(d\tau)}{1+\tau v(\tau \gamma)\delta(z)}$ is positive. For $z$ non real, both quantities are non real, and therefore do no have poles in $\mathcal I_\ell$. The only pole is then obtained for $\frac{\delta(z)}cp_k \int \frac{v(\tau \gamma)\nu(d\tau)}{1+\tau v(\tau \gamma)\delta(z)}+1=0$, that is for $z=\Lambda_\ell$ as defined in the previous section. Using l'Hospital rule, the residue of the right complex integral is then evaluated to be
\begin{align}
	{\rm Res}(\Lambda_{\ell}) &= \lim_{z\to \Lambda_{\ell}} (z-\Lambda_{\ell}) a^*\Pi_\ell b\frac{\delta(z)}c \frac{\int \frac{p_{\ell} v(t\gamma)}{1+tv(t\gamma)\delta(z)}\nu(dt)}{\int \frac{p_{\ell} v(t\gamma)}{1+tv(t\gamma)\delta(z)}\nu(dt) + \frac{c}{\delta(z)}} \nonumber \\
	&= \lim_{z\to \Lambda_{\ell}} a^*\Pi_\ell b \frac{\frac{\delta(z)}c \int \frac{v(t\gamma)p_{\ell}}{1+tv(t\gamma)\delta(z)}\nu(dt)+ (z-\Lambda_{\ell}) \frac{d}{dz} \left( \frac{\delta(z)}c \int \frac{p_{\ell} v(t\gamma)}{1+tv(t\gamma)\delta(z)}\nu(dt)\right)}{-c\frac{\delta'(z)}{\delta(z)^2}- \int \frac{t v(t\gamma)^2 p_{\ell} \delta'(z)}{(1+tv(t\gamma)\delta(z))^2} \nu(dt)} \nonumber \\
	\label{eq:res}
	&= a^*\Pi_\ell b \left( c\frac{\delta'(\Lambda_{\ell})}{\delta(\Lambda_{\ell})^2} + p_{\ell} \delta'(\Lambda_{\ell})\int \frac{\tau v_c(\tau\gamma)^2\nu(d\tau)}{(1+\tau v_c(\tau\gamma)\delta(\Lambda_{\ell}))^2} \right)^{-1}
\end{align}
where $\Pi_\ell\triangleq \sum_{i,p_i=p_\ell} u_iu_i^*$ and the last equality uses $\frac{\delta(\Lambda_{\ell})}cp_\ell \int \frac{v(\tau \gamma)\nu(d\tau)}{1+\tau v(\tau \gamma)\delta(\Lambda_{\ell})}=-1$. 
Recall now that
\begin{align*}
	\delta(\Lambda_{\ell}) &= c \left(-\Lambda_{\ell} + \int \frac{\tau v_c(\tau\gamma)}{1+\delta(\Lambda_{\ell})\tau v_c(\tau \gamma)} \nu(d\tau)\right)^{-1}
\end{align*}
from which
\begin{align*}
	\delta'(\Lambda_{\ell}) &= \frac{\delta(\Lambda_{\ell})^2}c \left( 1 - \frac{\delta(\Lambda_{\ell})^2}c\int \frac{\tau^2v_c(\tau\gamma)^2}{(1+\delta(\Lambda_{\ell})\tau v_c(\tau\gamma))^2}\nu(d\tau) \right)^{-1} > 0.
\end{align*}

From the expression of $p_{\ell}$ in the previous paragraph and these values, we then further find
\begin{align*}
	{\rm Res}(\Lambda_{\ell})&= a^*\Pi_\ell b \left(1- \frac{\int \frac{\delta(\Lambda_{\ell})\tau v_c(\tau\gamma)^2\nu(d\tau)}{(1+\delta(\Lambda_{\ell})\tau v_c(\tau\gamma))^2}}{\int \frac{v_c(\tau\gamma)\nu(d\tau)}{1+\delta(\Lambda_{\ell})\tau v_c(\tau\gamma)}}\right)^{-1} \left({1- \frac{\delta(\Lambda_{\ell})^2}c \int \frac{t^2v_c(\tau\gamma)^2\nu(d\tau)}{(1+\delta(\Lambda_{\ell})\tau v_c(\tau\gamma))^2}}\right) \\
	&=  a^*\Pi_\ell b\frac{\int \frac{v_c(\tau\gamma)\nu(d\tau)}{1+\delta(\Lambda_{\ell})\tau v_c(\tau\gamma)} \left( 1 - \frac{\delta(\Lambda_{\ell})^2}c \int \frac{t^2v_c(\tau\gamma)^2\nu(d\tau)}{(1+\delta(\Lambda_{\ell})\tau v_c(\tau\gamma))^2} \right)}{\int \frac{v_c(\tau\gamma)\nu(d\tau)}{(1+\delta(\Lambda_{\ell})\tau v_c(\tau\gamma))^2}}.
\end{align*}
Inverting the relation 
\begin{align*}
	\sum_{\substack{1\leq i\leq L \\ p_i=p_\ell} } a^*\hat{u}_i\hat{u}_i^*b - a^*\Pi_\ell b\frac{\int \frac{v_c(\tau\gamma)\nu(d\tau)}{1+\delta(\Lambda_{\ell})\tau v_c(\tau\gamma)} \left( 1 - \frac{\delta(\Lambda_{\ell})^2}c \int \frac{t^2v_c(\tau\gamma)^2\nu(d\tau)}{(1+\delta(\Lambda_{\ell})\tau v_c(\tau\gamma))^2} \right)}{\int \frac{v_c(\tau\gamma)\nu(d\tau)}{(1+\delta(\Lambda_{\ell})\tau v_c(\tau\gamma))^2}} &\asto 0
\end{align*}
and using $\hat{\lambda}_{\ell}\asto \Lambda_{\ell}$ for all $\ell\in\mathcal L$ then completes the proof.

\subsection{Empirical estimators}

To prove Theorem~\ref{th:3}, one needs to ensure that the empirical estimators introduced in the statement of the theorem are consistent with the estimators introduced in Theorem~\ref{th:2}. 

Note first that $\gamma-\hat{\gamma}_n\asto 0$ is a consequence of \eqref{eq:dilimit}. Indeed, letting $M>0$, from \eqref{eq:dilimit}, 
\begin{align*}
	\frac1n\sum_{\tau_j<M} \tau_jd_j - \gamma\frac1n\sum_{\tau_j<M} \tau_j \asto 0.
\end{align*}
Still from \eqref{eq:dilimit}, we also have, a.s.
\begin{align*}
	\frac1n\sum_{\tau_j\geq M} \tau_jd_j - \gamma\frac1n\sum_{\tau_j\geq M} \tau_j = o\left( \frac1n\sum_{\tau_j\geq M}\tau_j \right).
\end{align*}
But $\frac1n\sum_{\tau_j\geq M} \tau_j\asto \int_{(M,\infty)} t \nu(dt)\leq 1$ (say $M$ is a continuity point of $\nu$). Also, $\frac1n\sum_j \tau_j\asto 1$. Putting the results together then gives $\gamma-\hat{\gamma}_n\asto 0$. From this, we now get, again with \eqref{eq:dilimit},
\begin{align*}
	\max_{1\leq j\leq n,\tau_j\leq M} \left| \frac{\tau_j d_j}{\hat{\gamma}_n} - \tau_j \right| &\asto 0 \\
	\max_{1\leq j\leq n,\tau_j>M} \left| \frac{d_j}{\hat{\gamma}_n} - 1 \right| &\asto 0
\end{align*}
which is $\max_{\tau_j\leq M}|\tau_j-\hat{\tau}_j|\asto 0$ and $\max_{\tau_j>M}|\tau_j^{-1}\hat{\tau}_j-1|\asto 0$, as desired.

We now need to prove that $\hat{\delta}(x)-\delta(x)\asto 0$ uniformly on any bounded set of $(S^++\varepsilon,\infty)$. For this, recall first that both $\hat{\delta}$ and $\delta$ are Stieltjes transforms of distributions with support contained in $[0,S^+]$ and, as such, are analytic in $(S^++\varepsilon,\infty)$ and uniformly bounded in any compact of $(S^++\varepsilon,\infty)$. Taking the difference and denoting $\hat{\nu}_n=\frac1n\sum_{i=1}^n{\bm\delta}_{\hat{\tau}_i}$, we have
\begin{align*}
	&\hat{\delta}(x)-\delta(x) \nonumber \\
	&= \left(1-\frac{c}{c_n}\right)\hat{\delta}(x) + \frac{ \hat{\delta}(x)\delta(x)}{c_n} \left( \int \frac{t v_c(t\gamma)\nu(dt)}{1+\delta(x)tv_c(t\gamma)} - \int \frac{t v_c(t\hat\gamma_n)\hat\nu_n(dt)}{1+\hat\delta(x)tv_c(t\hat\gamma_n)}  \right) \\
	&= \left(1-\frac{c}{c_n}\right)\hat{\delta}(x) + \frac{ \hat{\delta}(x)\delta(x)}{c_n} \left((\hat{\delta}(x)-\delta(x))\int \frac{t^2v_c(t\gamma)v_c(t\hat\gamma_n)\nu(dt)}{(1+\delta(x)tv_c(t\gamma))(1+\hat\delta(x)tv_c(t\hat\gamma_n))} \right. \nonumber \\
	&\left. + \int \frac{t(v_c(t\gamma)-v_c(t\hat\gamma_n))\nu(dt)}{(1+\delta(x)tv_c(t\gamma))(1+\hat\delta(x)tv_c(t\hat\gamma_n))} + \int \frac{t v_c(t\hat\gamma_n)(\hat\nu_n(dt)-\nu(dt))}{1+\hat\delta(x)tv_c(t\hat\gamma_n)} \right).
\end{align*}
From uniform boundedness of $tv_c(t\hat\gamma_n)$ and $tv_c(t\gamma)$, and $\hat\nu_n( (t,M) )\asto \nu( (t,M) )$ weakly and $\hat\gamma_n\asto \gamma$, it is easily seen that the last two integrals on the right-hand side can be made arbitrarily small (e.g., by isolating $\tau_i\leq M$ and $\tau_i>M$ and letting $M$ large enough in the previous convergence). Also, the first integral on the right hand side is clearly bounded. Gathering the terms $\hat{\delta}(x)-\delta(x)$ on the left-hand side and taking $x$ large enough so to ensure $\hat\delta(x)\delta(x)$ is uniformly smaller than one (recall that their limit is zero as $x\to\infty$), we finally get that $\hat\delta(x)-\delta(x)$ can be made arbitrarily small. This is valid for any given large $x$ and therefore on some sequence $\{x^{(i)}\}$ of $(S^++\varepsilon,\infty)$ having an accumulation point, $\hat\delta(x^{(i)})\delta(x^{(i)})\asto 0$. Since $\hat\delta(x)-\delta(x)$ is complex analytic in $(S^++\varepsilon,\infty)$, by Vitali's convergence theorem, we therefore get that the convergence is uniform over any bounded set of $(S^++\varepsilon,\infty)$, which is what we wanted.

Since, for $i\in\mathcal L$ and for some $\varepsilon,M>0$, $\hat{\lambda}_i\in [S^++\varepsilon,M]$ for all large $n$ a.s., we therefore have that $\hat\delta(\hat\lambda_i)-\delta(\lambda_i)\asto 0$ for each $i\in\mathcal L$. Using all these convergence results, we then obtain, with the same line of arguments the asymptotic consistence between the estimates in Item~1.\@ and Item~2.\@ of both Theorems~\ref{th:2} and \ref{th:3}. This concludes the proof of Theorem~\ref{th:3}.

\subsection{Proof of Corollary~\ref{co:RGMUSIC}}
	We are here in the same setting as \cite[Theorem~3]{HAC13b}, only for our improved model. The proof is the same as in \citep{HAC13b} and relies on showing the uniform convergence of $\hat{\eta}_{\rm RG}(\theta)-\eta(\theta)$ across $\theta$, from which the result unfolds. In our setting, the point-wise convergence easily follows from Items~3.\@ in both Theorem~\ref{th:2} and Theorem~\ref{th:3}. Uniform convergence then hinges on a regular discretization of the set $[0,2\pi)$ into $N^2$ subsets and on (i) a Lipschitz control of the differences $\hat{\eta}_{\rm RG}(\theta)-\hat{\eta}_{\rm RG}(\theta')$ for $|\theta-\theta'|=O(N^{-2})$ and (ii) a joint convergence of $\hat{\eta}_{\rm RG}(\theta)-\eta(\theta)$ over the $N^2+1$ edges of the subsets. Point (i) uses the defining properties of $a(\theta)$ from Assumption~\ref{ass:arrayprocessing} similar to \citep{HAC13b}, while Point (ii) is obtained thanks to a classical union bound on $N^2$ events, the validity of which follows from considering sufficiently high order moment bounds on the vanishing random quantities involved in $\hat{\eta}_{\rm RG}(\theta)-\eta(\theta)$. In our setting, the latter moment bounds are obtained by selecting $p$ large enough in Lemma~\ref{le:1} of Section~\ref{sec:proof} (in a similar fashion as is performed for the technical proof that $\min_{j}\lambda_N(\hat{S}_{(j)})>\varepsilon$ for all large $n$ a.s. in Section~\ref{sec:proof}). It is easily seen that, this being ensured, the proof of Corollary~\ref{co:RGMUSIC} unfolds similar to that of \cite[Theorem~3]{HAC13b}, which as a consequence we do not further detail.

\bibliographystyle{elsarticle-harv}
\bibliography{/home/romano/Documents/PhD/phd-group/papers/rcouillet/tutorial_RMT/book_final/IEEEabrv.bib,/home/romano/Documents/PhD/phd-group/papers/rcouillet/tutorial_RMT/book_final/IEEEconf.bib,/home/romano/Documents/PhD/phd-group/papers/rcouillet/tutorial_RMT/book_final/tutorial_RMT.bib}

\end{document}